\documentclass[11pt]{article}
\usepackage[margin=1in]{geometry}
\usepackage[T1]{fontenc}
\usepackage{lmodern,amsmath,amssymb,amsthm,mathtools,microtype}
\usepackage[authoryear,round]{natbib}
\setcitestyle{aysep={,}}

\makeatletter
\renewcommand\NAT@nmfmt[1]{{\NAT@up#1\unskip}}
\makeatother
\usepackage[colorlinks=true,linkcolor=black,citecolor=black,urlcolor=black]{hyperref}
\hypersetup{pdftitle={A Dimension-Free Bound on the Poincare Constant of Isotropic Log-Concave Measures},pdfauthor={Krishnakumar Balasubramanian and Shiva Kasiviswanathan}}

\newcommand{\R}{\mathbb R}
\newcommand{\HS}{\mathrm{HS}}
\DeclareMathOperator{\Tr}{Tr}
\DeclareMathOperator{\Var}{Var}

\newtheorem{theorem}{Theorem}[section]
\newtheorem{lemma}[theorem]{Lemma}
\newtheorem{proposition}[theorem]{Proposition}

\theoremstyle{definition}
\theoremstyle{remark}\newtheorem{remark}[theorem]{Remark}
\numberwithin{equation}{section}
\title{A Dimension-Free Bound on the Poincar\'e Constant\protect\\ of Isotropic Log-Concave Measures}

\author{
\begin{tabular}{cc}
Krishnakumar Balasubramanian$^{1,2}$ &
Shiva Kasiviswanathan$^{1}$ \\
\texttt{kbala@ucdavis.edu} &
\texttt{kasivisw@gmail.com}
\end{tabular}
\\[1em]
$^1$Amazon\thanks{This work does not relate to the authors' positions at Amazon.}\\
$^2$Department of Statistics, University of California, Davis
}

\date{}
\begin{document}
\maketitle

\begin{abstract}
The Kannan--Lov\'asz--Simonovits (KLS) conjecture asserts that
isotropic log-concave probability measures have Poincar\'e
constants bounded by a universal constant, independently of
dimension. We give a deterministic variational proof with an explicit
bound on the Poincar\'e
constant $C_P(\mu)\le25$, where $\mu$ is any isotropic log-concave probability measure. Starting from elliptic moment estimates
and a quadratic variance inequality, we establish geometric
bounds on normalized Appell coefficient norms through \emph{joint}
maximization over the measure and test function. Variation
of the measure gives a maximum-principle inequality, while
stationarity in the test function controls the highest-order
cumulant terms. Two concave barriers constructed from quadratic
and cubic polynomials close the induction. A weighted
Helmholtz--Hodge decomposition then controls the curl correction
of weighted divergence, yielding a curvature estimate for
compatible symmetric tensor fields that is uniform in rank.
Quadratic duality converts this estimate into an operator
comparison for centered integration, linking the coefficient
bounds to control of the inverse gradient. A spectral-radius
estimate and a scalar growth inequality for adjoint iterates
then yield the Poincar\'e bound. The final estimate is independent
of the auxiliary positive curvature, allowing approximation
to complete the proof for general isotropic log-concave measures.
\end{abstract}

\section{Introduction}\label{sec:introduction}

Let $\mathcal C_n$ denote the class of isotropic log-concave probability
measures on $\R^n$. Thus, each $\mu\in\mathcal C_n$ has a density
$Z^{-1}e^{-V}$ for a convex potential $V$, possibly taking the value
$+\infty$, and has mean zero and covariance equal to the identity.
The Kannan--Lov\'asz--Simonovits (KLS) conjecture \citep{KLSOriginal}
asks whether
\[
 \sup_{\mu\in\mathcal C_n}\ \sup_f
 \frac{\Var_\mu f}{\int|\nabla f|^2\,d\mu}
 < C
\]
for a universal constant $C<\infty$, independent of $n$. Here the
inner supremum is over locally Lipschitz $f\in L^2(\mu)$ with
finite, nonzero gradient energy. For a fixed $\mu$, this inner
supremum (over $f$) is its Poincar\'e constant $C_P(\mu)$.

The original geometric formulation of KLS asks whether hyperplane
cuts approximate the optimal isoperimetric ratio up to a universal
factor. Its connection to the formulation above is through the
Cheeger isoperimetric constant $h_\mu$: writing
\[
 \psi_n=\sup_{\mu\in\mathcal C_n}h_\mu^{-1},
 \qquad\text{we have}\qquad
 \psi_n^2\asymp\sup_{\mu\in\mathcal C_n}C_P(\mu).
\]
Thus, KLS equivalently asks whether $\psi_n$ is bounded by a
universal constant, independently of dimension. Here and below,
comparison constants are universal. This equivalence, and its
relation to concentration, are discussed by~\citet[Theorem~1.5]{Milman}.

KLS bounds have consequences for both probability and computation.
They control the conductance and mixing of geometric random walks
used to sample log-concave distributions \citep{LeeVempalaLocalization},
and enter the complexity of rounding and volume algorithms for convex
bodies \citep{JiaLaddhaLeeVempala}. A uniform spectral gap also yields
uniform exponential concentration of Lipschitz test functions
\citep{Milman}. For isotropic $X\sim\mu$, the test function $f(x)=|x|^2$
gives $\Var(|X|^2)\le4nC_P(\mu)$. This connects KLS to the thin-shell
problem and, through thin-shell concentration, to the central limit
phenomenon for convex bodies \citep{KlartagCLT}.

\subsection{Related Work and Progress}

The seminal deterministic localization argument of \cite{KLSOriginal} gave $\psi_n\lesssim\sqrt n$.
Bobkov's comparison between isoperimetry and fluctuations of the
Euclidean norm \cite{BobkovIsoperimetry}, combined with the thin-shell
estimate of \cite{GuedonMilman}, improved this to
$\psi_n\lesssim n^{5/12}$. \cite{EldanLocalization} introduced stochastic localization and
related thin-shell estimates to the spectral gap up to logarithmic
factors; together with \cite{GuedonMilman}, this yielded
$\psi_n\lesssim n^{1/3}\sqrt{\log n}$.

Subsequent general KLS bounds developed this stochastic approach. \cite{LeeVempalaLocalization} obtained
$\psi_n\lesssim n^{1/4}$, and in a landmark work, \cite{ChenAlmostConstant} established
the first subpolynomial bound, $\psi_n=n^{o(1)}$. \cite{KlartagLehecPolylog} proved
$\psi_n\lesssim(\log n)^5$, giving the first polylogarithmic bound. \cite{JambulapatiLeeVempala} improved the
exponent to $3.2226$. \citet[Theorems~1.2--1.3]{KlartagLogarithmic} then obtained
$\psi_n\lesssim\sqrt{\log n}$ by combining localization with an improved
Lichnerowicz inequality for log-concave measures.

Progress on the thin-shell problem supplied stronger moment estimates.
Building on the covariance estimates by \citep{GuanSlicing}, \cite{KlartagLehecThinShell} proved a dimension-free thin-shell
bound using a parallel coupling associated with stochastic localization. \citet[Theorem~1.1]{ChenKlartag} subsequently established
the sharp inequality $\Var(|X|^2)\le8n$ for isotropic log-concave $X$
by an elliptic method.
Their proof analyzes the Monge--Amp\`ere equation for moment measures,
using the construction of  \cite{CorderoKlartagMoments} and the associated weighted geometry
developed by \cite{KlartagMomentGeometry}.

The same elliptic method also controls directional third moments.
Define
\[
 \kappa_n=\sup_X\sup_{|\theta|=1}
 \bigl\|\mathbb E[(\theta\cdot X)X\otimes X]\bigr\|_{\HS},
\]
where $X$ ranges over isotropic log-concave vectors in $\R^n$. \citet[Section~2 and Corollary~2.7, Equation~(47)]{BizeulMM}
explains how an anisotropic modification of the argument by \cite{ChenKlartag}, gives $\kappa_n^2\le16$. \citet[Theorem~1.2]{Letwin} improved the corresponding sharp
Poincar\'e inequality for every quadratic form:
\[
 \Var(X^\top T X)\le8\Tr(T^2)
 \qquad\text{for every symmetric matrix $T$.}
\]
This improves the third-moment bound to $\kappa_n^2\le8$
\citep[Proof of Theorem~1.1]{Letwin}.

\cite{Letwin} further combined these moment estimates with stochastic localization.
The third moments govern covariance evolution, while a growing quadratic term in the
potential gives the localized measures positive curvature. Covariance control and Klartag's refined
Lichnerowicz inequality
\cite[Theorem~1.3 and Corollary~3.2]{KlartagLogarithmic}
transfer spectral estimates from the localized measures to the
original law. In particular, a dimension-free bound on
$\kappa_n$ yields
$\psi_n\lesssim(\log n)^{1/4}$, or equivalently
$C_P(\mu)\lesssim\sqrt{\log n}$ for isotropic log-concave $\mu$
\citep[Theorem~1.1 and its proof]{Letwin}.

During the final stages of preparation of this manuscript, a recent work by \citet[Version 1, Theorem~7.1]{SongZhangV1} combined higher-degree polynomial estimates with an iteration of curvature bounds to obtain
\[
 \psi_n\le C\,4^{\log^*(n+2)}.
\]
Here, $\log^*x$ is the least number of successive natural logarithms
needed to reduce $x$ to at most one. Their argument combines higher-degree polynomial variance estimates
and a spectral curvature comparison with stochastic localization. Concurrent to our work,~\cite{BizeulKlartagLehec} and \citet[Version 2]{SongZhangV2} obtain dimension-free estimates on the Poincar\'e constants of isotropic log-concave measures.



\subsection{Our Result}

We give a deterministic proof from the elliptic moment estimates,
with an explicit universal constant.

\begin{theorem}\label{thm:main}
For every $n\ge1$, let $\mathcal C_n$ denote the class of isotropic
log-concave probability measures on $\R^n$. Then
\begin{equation}\label{eq:main-bound}
 \sup_{\mu\in\mathcal C_n}\ \sup_f
 \frac{\Var_\mu f}{\int|\nabla f|^2\,d\mu}
 \le25,
\end{equation}
where the inner supremum is over locally Lipschitz $f\in L^2(\mu)$
with nonzero finite gradient energy. 
\end{theorem}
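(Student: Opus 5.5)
We follow the route announced in the abstract and reduce Theorem~\ref{thm:main} to a statement about measures with a small amount of extra curvature. First we approximate. Fix $\mu\in\mathcal C_n$ and $\varepsilon>0$, and let $\mu_\varepsilon$ be the measure with potential $V+\varepsilon|x|^2/2$, pushed forward by the affine map that restores isotropy. Then $\mu_\varepsilon$ is isotropic, log-concave, and uniformly convex with a positive curvature parameter. Since $C_P$ is lower semicontinuous under weak convergence with converging covariances, it suffices to prove $C_P(\mu_\varepsilon)\le25$ by an argument whose constant does not depend on the auxiliary curvature. This curvature only buys qualitative facts: existence of maximizers, compactness of the resolvent of $L=\Delta-\nabla V\cdot\nabla$, and the spectral theorem.

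The central step is to control the normalized Appell (Hermite-type) coefficient norms. For a test function $f$, let $a_k(f)=\|\int\nabla^k f\,d\mu\|_{\HS}$. We look for geometric bounds
\[
 a_k(f)^2\le A\,\rho^{k}\int|\nabla f|^2d\mu,
\]
uniformly over the class, with $\rho<1$ and $A$ explicit. The base cases come from the elliptic estimates quoted in the introduction: $\kappa_n^2\le8$ and $\Var(X^\top TX)\le8\Tr T^2$ handle degrees $2$ and $3$. We then argue by induction on $k$, using joint maximization of the ratio over the pair (measure, test function).
\begin{itemize}
\item Perturbing the measure inside the class (tilts $e^{-\varepsilon\phi}$ with $\phi$ convex, followed by re-isotropization) gives first-order inequalities of maximum-principle type.
\item Stationarity in $f$ gives an Euler--Lagrange equation. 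It expresses the top-order cumulant contributions in terms of lower-order ones.
\end{itemize}
To close the induction we build two concave barrier functions, one from quadratic and one from cubic polynomials. Their role is to show that the extremal ratio cannot exceed a fixed level at each degree.

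Next we convert coefficient bounds into a Poincaré bound. Write $f-\int f\,d\mu=\int_0^\infty P_t Lf\,dt$. Since $\nabla P_t f$ is a compatible symmetric tensor field, a weighted Helmholtz--Hodge decomposition splits the weighted divergence into a gradient part and a curl correction. The curl correction is controlled through a Bochner-type curvature estimate that is uniform in tensor rank, and this uniformity is exactly what we have to prove. Quadratic duality then yields an operator inequality for the centered integration operator $\mathcal I=\nabla L^{-1}$. Roughly, $\|\mathcal I\|^2$ is bounded by a series $\sum_k a_k$-type terms involving adjoint iterates $(\mathcal I^*\mathcal I)^k$. A spectral-radius argument (Gelfand's formula applied to these iterates) combined with a scalar growth inequality of the form $x_{k+1}\le c_1+c_2\rho x_k$ then gives $\|\mathcal I\|^2=C_P\le 25$ once the constants are tracked.

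We expect the main obstacle to be the coefficient induction, specifically making the stationarity in $f$ genuinely control the highest cumulant without losing a dimension factor. Our first attempt is to restrict the maximization to polynomials of degree at most $k$, where the Euler--Lagrange equation is finite-dimensional, and check the barrier inequalities there. We would then pass to general $f$ by density in $H^1(\mu_\varepsilon)$, which the curvature guarantees. The uniform-in-rank curl estimate is the second delicate point. For it we would use the commutation $\nabla L=L\nabla-\nabla^2V\,\nabla$, together with $\nabla^2V\ge0$, to show that the curl part only adds a nonnegative term.
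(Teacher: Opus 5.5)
Your outline follows the abstract, but its central object is wrong, and the two steps you flag as delicate would not go through as proposed. Start with the coefficient functional. You take $a_k(f)=\|\int\nabla^kf\,d\mu\|_{\HS}$ and aim for $a_k(f)^2\le A\rho^k\int|\nabla f|^2\,d\mu$ with $\rho<1$. This already fails for the standard Gaussian $\gamma$. For $f=\langle T,H_k\rangle$, with $H_k$ the Hermite tensor, one has $\int\nabla^kf\,d\gamma=k!\,T$ and $\int|\nabla f|^2\,d\gamma=k^2(k-1)!\,\|T\|^2$, so the ratio is $(k-1)!$. More fundamentally, for non-Gaussian $\mu$ the tensor $\int\nabla^kf\,d\mu$ is a derivative under translation, not the tilt derivative $D_\theta^k\mathbb E_\theta f$. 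The identities $\partial_{\theta_i}B_j=B_{j+1}(\cdot,e_i)$ and $\mathcal DB_j=-jB_j-R_j$ that drive the maximum principle are therefore unavailable for it. The paper instead uses $B_d^\mu(f)=\int fA_d^\mu\,d\mu$, built from the Appell tensors of $e^{z\cdot x}/M_\mu(z)$ and normalized by $\|f\|_{L^2(\mu)}$. It proves \emph{growing} bounds $c_d^2\le BK^{d-3}$ with $K=223/10$. The $L^2$ normalization is also what makes projection onto polynomials of degree at most $k+1$ unable to decrease the quotient. Maximizers then exist by compactness of $\mathcal C_n$ alone, with no curvature and no $H^1$ density step. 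On the measure side, first-order variations in $\theta$ give only stationarity. The usable inequality $\mathcal D\mathcal F\le0$ combines $\partial_s^+\mathcal F\le0$ with the trace of $D_\theta^2\mathcal F\preceq0$. Finally, degree $3$ is the first induction step, not a base case. The covariance terms at every step use the two-slot bound $\sum_k\Tr(TS_kTS_k)\le4\Tr(T^2)$ of Lemma~\ref{lem:sharp-third-pair-slot}, which your plan omits; it keeps the quadratic-in-rank loss small enough for the stated constants.

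Your curl step has the sign reversed. For $m\ge1$ and $F=\nabla^{m+1}\psi$, the weighted divergence $Y=\operatorname{div}_VF$ is not compatible, and the adjoint of the gradient on compatible fields is $P_{\mathcal C_m}Y$. The Bochner commutation gives $\|Y\|_2^2=\|\nabla F\|_2^2+B_{\mathsf K}(F)$, hence $\|P_{\mathcal C_m}Y\|_2^2=\|\nabla F\|_2^2+B_{\mathsf K}(F)-\|(I-P_{\mathcal C_m})Y\|_2^2$. The curl correction is \emph{subtracted}, and $D^2V\succeq0$ says nothing about its size. What is needed is $\|(I-P_{\mathcal C_m})Y\|_2^2\le B_{\mathsf K}(F)-a\|F\|_2^2$ uniformly in $m$. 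The paper obtains this from the constrained Koszul complex on $\ker b$, the variational formula for the projection, and a pointwise completed square.

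Your conversion step also lacks the structure that makes the coefficient bounds usable. Iterating $\mathcal I^*\mathcal I=\mathcal L_V^{-1}$ on scalar functions gives nothing beyond $C_P$ itself. The missing idea is to grade by tensor rank: $J_r$ integrates centered compatible rank-$(r+1)$ fields and $L_r$ integrates constant tensors. Then $J^{d-1}L$ is repeated centered integration producing the Appell polynomials, $\|J^{d-1}L\|=c_d(\mu)$, and $C_P\le1+\|J\|^2$. The curvature is used quantitatively here, not only qualitatively. In $J^*J\preceq g_a(JJ^*+LL^*)$, the strict bound $g_a<1/a$ forces an approximate eigenvector $v$ of $J^*$ at the spectral radius to have $\|L^*v\|$ bounded below, and the coefficient bounds then give $r(J)^2\le K$. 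The norm bound comes from the superlinear \emph{lower} recurrence $u_{m+1}\ge u_m^2/u_{m-1}-\|J^mL\|^2$ for $u_m=\|(J^*)^mf\|^2$. If $\|J\|^2>24$, these quantities grow faster than $K^m$, contradicting Gelfand's formula. No upper linear recurrence of the kind you describe appears.

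A smaller point: your approximants $V+\varepsilon|x|^2/2$ still need smoothing, for instance by a Moreau envelope and mollification as in the paper. The domain and density lemmas require a smooth potential with $aI\preceq D^2V\preceq MI$.
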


The Poincar\'e--Cheeger equivalence for log-concave measures
\cite[Theorem~1.5]{Milman} also gives the isoperimetric formulation
of KLS from Theorem~\ref{thm:main}.

\subsection{The Variational and Spectral Approach}

Starting from the elliptic moment estimates, our deterministic proof combines a coefficient induction with a spectral argument. A \emph{joint} variational argument over the measure
and the test function  first yields bounds on the tilt coefficients.
A curvature estimate for compatible tensor fields then converts
these bounds into a spectral estimate that remains uniform as an
auxiliary positive curvature parameter tends to zero.

\paragraph{The Joint Variational Problem.}
For an arbitrary real-valued function $f\in L^2(\mu)$,
which we call a test function, define (for $d\geq 1$)
\[
 B_d^\mu(f)=
 \left.D_z^d\left(
 \frac{\int f(x)e^{z\cdot x}\,d\mu(x)}
      {\int e^{z\cdot x}\,d\mu(x)}
 \right)\right|_{z=0},
 \qquad
 c_d(\mu)=\frac1{d!}
 \sup_{\|f\|_{L^2(\mu)}=1}\|B_d^\mu(f)\|.
\]
The tensor $B_d^\mu(f)$ describes the degree-$d$ response of the
expectation of $f$ to a linear tilt, and $c_d(\mu)$ is the
corresponding normalized operator norm. Writing
$c_d=\sup_{\mu\in\mathcal C_n}c_d(\mu)$, where $\mathcal C_n$ is the
class of isotropic log-concave laws in the fixed dimension, makes
the joint extremum over pairs $(\mu,f)$ explicit.

Our induction follows this variational structure. At each step,
we maximize a covariance-normalized quotient of successive
coefficient energies over both the law and the test function.
Polynomial projection does not decrease the quotient, allowing
us to restrict $f$ to a fixed finite-dimensional polynomial space.
After imposing $\|f\|_{L^2(\mu)}=1$, compactness gives a maximizing
pair.

Joint maximization also accommodates the affine normalization
of a tilt. Consider the standard family of normalized linear
and quadratic tilts
\cite[Section~3, Equation~(3.1)]{KlartagLogarithmic},
\[
 d\mu_{\theta,s}(x)\propto
 e^{\theta\cdot x-s|x|^2/2}\,d\mu(x),\qquad s\ge0,
\]
with $\theta$ and $s$ treated as deterministic parameters. A tilt changes the mean and covariance. Restoring isotropy
transforms the test function together with the measure and preserves
the covariance-normalized energies. The resulting pair therefore
remains admissible in the joint variational problem.

The two variables yield complementary optimality conditions.
Varying the measure within this normalized family gives a
maximum-principle inequality for
\[
 \mathcal D=\partial_s+\tfrac12\Delta_\theta
\]
at $(\theta,s)=(0,0)$: the quotient has a negative semidefinite
Hessian in $\theta$ and a nonpositive right derivative in $s$.
Varying $f$ while keeping the maximizing law fixed gives the
stationarity identity \eqref{eq:barrier-terminal-cancellation}.
This identity expresses the combination of the two terminal
cumulant terms in terms of coefficient energies and the linear
coefficient. It supplies the additional relation needed to close
the induction without estimating the next cumulant order
separately.


\paragraph{Quantitative Coefficient Bounds.}
Maximum principles also enter the elliptic estimates underlying
the induction. \citet[Section~3, Proposition~3.1]{KlartagMomentGeometry} uses one to obtain a preliminary pointwise
bound on the moment-map Hessian. The Hessian boundedness established in that paper is used to obtain the sharp integral estimates of
\citet[Section~2, Equation~(10), and Section~3]{ChenKlartag}
and \cite[Section~2.1, Theorem~2.5]{Letwin}.

Section~\ref{sec:elliptic-inputs} combines weighted
moment-map bound with the third-moment identity
by \citet[Lemma~3.7]{ChenKlartag}, using a weighted version of their
Bessel argument. These estimates control the covariance terms
in the maximum-principle inequality. Isotropy and the quadratic inequality by~\cite{Letwin} supply the first two coefficient bounds.
The remaining induction reduces to a scalar inequality for a
positive concave barrier. A quadratic polynomial certifies the
degree-three step, and a single cubic polynomial handles all
higher degrees. The resulting bounds are
\begin{equation}\label{eq:intro-coefficients}
 c_1=1,\qquad c_2^2\le2,\qquad
 c_d^2\le B K^{d-3}\quad(d\ge3),
 \qquad B=\frac{121}{4},\quad K=\frac{223}{10}.
\end{equation}

\paragraph{From Coefficient Bounds to the Spectral Gap.}
We pass from coefficient estimates to a Poincar\'e bound through
the inverse gradient. For a smooth density $e^{-V}$, the
Poincar\'e constant is the reciprocal of the spectral gap of
\[
 \mathcal L_V=-\Delta+\nabla V\cdot\nabla
\]
on $L^2(\mu)$. Equivalently, it is the squared norm of the inverse
gradient on its range, with each primitive normalized to have
mean zero. In several variables, symmetry of mixed derivatives
imposes compatibility conditions on fields that can be integrated.
We therefore work on spaces of compatible symmetric tensor fields.

For these operator constructions, we first assume
$D^2V\succeq aI$ for some $a>0$. The Brascamp--Lieb inequality
\cite{carlen-cordero-lieb} gives the preliminary bound
$C_P(\mu)\le a^{-1}$ and ensures that the inverse gradients are
bounded. Repeated centered integration of constant tensors
produces the classical Appell polynomials
\citep{AnshelevichAppell}. The coefficient estimates thus control
integration along these polynomial chains. To obtain a spectral
gap, we must extend this control to arbitrary compatible fields.

This extension rests on a curl-correction argument based on a
weighted Helmholtz--Hodge decomposition. The weighted divergence
of a compatible field $F$ can have nonzero curl, so we split it
into a compatible component and an orthogonal correction.
Projection onto the compatible component gives the adjoint of
the restricted gradient. The curl depends algebraically on $F$
and $D^2V$, with all derivatives of $F$ canceling. A variational
estimate for the constrained Hodge Laplacian bounds the energy
of the correction by the curvature in excess of $a$. The
projected divergence therefore retains the term $a\|F\|_2^2$
in its energy estimate, uniformly in tensor rank.

Let $J$ integrate centered compatible fields and let $L$ integrate
constant tensors, with all tensor ranks assembled in a Hilbert
direct sum. The Appell identities relate their iterates to the
coefficient norms, while the inverse-gradient representation
bounds the Poincar\'e constant:
\[
 \|J^{d-1}L\|=c_d(\mu),
 \qquad C_P(\mu)\le1+\|J\|^2.
\]
Together with the density of projected divergences
(Lemma~\ref{full-divergence-range}), the Hodge estimate permits
quadratic duality and yields the operator comparison
\begin{equation}\label{eq:intro-operator}
 J^*J\preceq g_a(JJ^*+LL^*),
 \qquad g_a(t)=\frac{t}{1+at}.
\end{equation}
For $a>0$, this comparison combines with the bounds on
$\|J^{d-1}L\|$ to give
\[
 r(J)^2\le K.
\]
Controlling the operator norm requires a further argument,
since it can exceed the spectral radius. Applying
\eqref{eq:intro-operator} to successive adjoint iterates yields
a scalar energy inequality: if $\|J\|^2>24$, the iterates would
grow faster than the spectral-radius bound permits. Consequently,
\[
 \|J\|^2\le24,
 \qquad C_P(\mu)\le25.
\]

The final bound is independent of $a$, and no constant from
fixed-dimensional compactness enters it. An approximation argument
therefore removes the auxiliary curvature and regularity
assumptions, completing the proof for general isotropic
log-concave laws without a localization step.

\subsection{Organization and Conventions}

Section~\ref{sec:elliptic-inputs} proves the required third-moment
estimate. Sections~\ref{sec:static-coefficients}
and~\ref{sec:coefficient-certificates} develop the
maximum-principle argument based on joint maximization over
the measure and the test function, together with the polynomial
barriers that close the coefficient induction.
Sections~\ref{sec:full-hodge}
and~\ref{full-integration-section} establish the Hodge estimate
and the operator comparison for centered integration on
compatible tensor fields. Section~\ref{sec:operator-estimate}
proves the operator bound, and Section~\ref{sec:completion} completes
the theorem. The appendices collect the compactness, primitive, operator-domain, and approximation arguments, including the details needed for the constrained tensor spaces, required in this work.

Tensor norms sum squares over ordered indices; exterior-form norms
sum squares over increasing indices. The order $B\preceq C$ denotes
quadratic-form order. A law is \emph{regular} if its potential is
smooth, $aI\preceq D^2V\preceq MI$ for some $0<a\le M<\infty$,
and all derivatives of the potential have at most polynomial growth.
The constants $a,M$ may depend on the law. The affine support is the
affine hull of the support, and gradients there are intrinsic.
The density and set definitions of log-concavity agree on that space
by Borell's characterization \citep{Borell}.

\subsection{AI Usage Statement}

We developed this proof with substantial assistance from several frontier AI models. The project was inspired by the elegance of \cite{ChenKlartag}, \cite{BizeulMM} and~\citep{Letwin}. Our earlier argument based on a variational Stein-kernel formulation with a structured tensor recovery from partial symmetries bound, yielded a bound for the reciprocal Cheeger constant with a fourth-root dependence on the iterated logarithm of the dimension. The present proof starts from the quadratic variance estimates and uses joint maximization over the measure and the test function to close a direct induction on polynomial degree. Compatible integration operators and an operator norm estimate then turn the coefficient bounds into a spectral gap. Three AI interactions are worth emphasizing:
\begin{itemize}

\item Motivated by the use of Helmholtz--Hodge decomposition in the
analysis of drifting models~\citep{balas0}, we asked the AI whether a
similar decomposition could help control curl in our proposed KLS
argument. We suggested decomposing the weighted divergence of a
compatible symmetric tensor field into its compatible component and an
orthogonal remainder, then bounding the squared $L^2$ norm of that
remainder. The AI identified the need for estimates uniform in tensor
rank and formulated a weighted Hodge comparison for symmetric tensor
fields.

\item We also asked the AI whether the Fourier coefficient strategy
of~\citet{balas} could inform the KLS argument. We suggested organizing
the functional-analytic problem around a tractable hierarchy of
coefficients controlled by recursive bounds. The AI proposed Appell
coefficient norms to measure repeated centered integration of constant
tensors. It also proposed using curvature and the Hodge estimate to
transfer these bounds to repeated integration of arbitrary compatible
fields with mean zero.

\item We suggested formulating the coefficient bounds as a joint variational problem over the measure and the test function. The AI developed this idea by combining a maximum principle for the tilt parameters with stationarity in the test function to control the highest-order cumulant terms directly. It then constructed two concave barriers by inverting explicit polynomials, closing the coefficient induction. Together with the Hodge comparison, operator estimate, and approximation argument, these coefficient bounds yield the bound $C_P(\mu)\le25$.

\end{itemize}
We have carefully verified all arguments developed with the assistance of AI and take full responsibility for the content of this work. We also believe that the argument may admit further simplification, possibly through other techniques or intuitions already familiar to the community.

\subsection*{Acknowledgment}
We sincerely thank Yuansi Chen for helpful comments on an earlier version of this manuscript.

\section{Elliptic Control of Third Moments}\label{sec:elliptic-inputs}

Covariance changes under a linear tilt are governed by the third-moment
tensor. The coefficient calculation will apply its matrices to one or
two indices of a symmetric tensor. The elliptic input required in our work is the moment-map estimate by
\citet[Theorem~1.5 and Lemmas~3.3--3.5]{ChenKlartag}, whose anisotropic
adaptation is recorded by
\citet[Proposition~2.4, Equation~(42)]{BizeulMM}.
We use the following weighted formulation from \cite{Letwin}, together with the third-moment identity by \cite{ChenKlartag}, to obtain a two-index estimate.

\begin{lemma}[Weighted Moment-map Estimate
{\citep[Theorem~2.5]{Letwin}}]
\label{lem:weighted-moment-map-energy}
Let $X$ be isotropic and log-concave, with density $e^{-V}$ on a bounded
open convex set $K$, where $V$ is smooth and convex on a neighborhood
of $\overline K$. Let $\nu(dy)=e^{-\phi(y)}\,dy$ be its moment-measure
representation and put $H=D^2\phi$. For every symmetric matrix $T$,
\begin{equation}\label{eq:weighted-moment-map-input}
 \int\operatorname{Tr}(THTH)\,d\nu
 \le2\operatorname{Tr}(T^2).
\end{equation}
\end{lemma}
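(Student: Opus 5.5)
This is the Chen--Klartag moment-map estimate in Letwin's weighted form. I would reproduce their maximum-principle/integration-by-parts argument, applied to the matrix-weighted quantity $\operatorname{Tr}(THTH)$ rather than to $\Delta\phi$ alone.

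\textbf{Setup.} By the Cordero--Klartag construction, the isotropic log-concave law $\mu$ is the moment measure of a convex $\phi$: the map $\nabla\phi$ pushes $\nu=e^{-\phi}dy$ forward to $\mu$. This gives the Monge--Amp\`ere equation
\[
 e^{-\phi}=e^{-V(\nabla\phi)}\det D^2\phi .
\]
Because $X$ has a smooth density on a bounded convex domain, $\phi$ is smooth, $H=D^2\phi$ is positive definite and bounded on $\R^n$, and the boundary behaviour needed to integrate by parts against $\nu$ is available (Klartag's moment-measure regularity).

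\textbf{Weighted Laplacian and the basic identity.} Let $L$ be the weighted Laplacian
\[
 Lu=\operatorname{Tr}(H^{-1}D^2u)-\nabla\phi\cdot\nabla u ,
\]
the generator of the metric-measure space $(\R^n,H,\nu)$. Differentiating the Monge--Amp\`ere equation twice yields the Bochner-type identity
\[
 L\,\phi_{ij}=\phi_{ij}-\phi_{ikl}\phi^{kk'}\phi^{ll'}\phi_{jk'l'}+\text{(a term from } D^2V\succeq0).
\]
Contracting with the constant matrix $T$ and using $V$ convex gives a supersolution inequality for $\operatorname{Tr}(TH)$. Isotropy translates into
\[
 \int\nabla\phi\otimes\nabla\phi\,d\nu=I,\qquad \int \nabla\phi\,d\nu=0 .
\]

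\textbf{Core estimate.} Integrating by parts against $\nu$, I would prove
\[
 \int\operatorname{Tr}(THTH)\,d\nu=\int\operatorname{Tr}(T^2)\,d\nu+\text{(cross terms)} .
\]
The route is to test the Monge--Amp\`ere identity with $u=\tfrac12\,T\nabla\phi\cdot\nabla\phi$. One has $\int u\,d\nu=\tfrac12\operatorname{Tr}T$ from isotropy, and the carr\'e du champ is
\[
 \Gamma_H(u)=|HT\nabla\phi|^2_{H^{-1}} .
\]
The Chen--Klartag computation (their Lemmas 3.3--3.5) expresses the cross terms through the third-derivative tensor $\phi_{ijk}$. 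The convexity term has a favourable sign, and the remaining cubic term is bounded by Cauchy--Schwarz in the $H$-metric, which yields a self-improving quadratic inequality
\[
 A\le\operatorname{Tr}(T^2)+\sqrt{A\,\operatorname{Tr}(T^2)}\quad\text{or similar},\qquad A=\int\operatorname{Tr}(THTH)\,d\nu .
\]
Solving it gives $A\le 2\operatorname{Tr}(T^2)$ after optimizing the Cauchy--Schwarz splitting; the constant $2$ is exactly what makes the bound sharp for the Gaussian case $H=I$ up to the factor.

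\textbf{Main obstacle.} The delicate step is handling the anisotropic cross term, in which $T$ does not commute with $H$. Chen--Klartag treat $T=I$, where $\operatorname{Tr}(H^2)$ appears directly. For general $T$, I would first diagonalize $T$ by an orthogonal change of variables, which preserves isotropy, as in Bizeul's adaptation. I would then bound
\[
 \operatorname{Tr}(T\,\partial_kH\,H^{-1}\partial_kH)
\]
by writing it as a positive semidefinite form in $T$ and using $T\mapsto T^2$ convexity. I would also need to justify the vanishing of boundary terms in the integrations by parts via the bounded-domain and smoothness assumptions. If this step resisted, the fallback is to cite \citet[Theorem~2.5]{Letwin} directly, since the lemma is quoted from there.
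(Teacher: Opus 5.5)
The paper does not prove this lemma. It imports it verbatim from Letwin's Theorem~2.5, taking existence and smoothness of the moment-measure representation from Letwin's Lemmas~2.1--2.2, so your fallback is exactly the paper's route. The self-contained derivation you sketch, however, has a genuine gap, and several of its ingredients are misstated. The operator $\Tr(H^{-1}D^2u)-\nabla\phi\cdot\nabla u$ is not the $\nu$-symmetric generator with carr\'e du champ $\langle H^{-1}\nabla u,\nabla u\rangle$. That generator is $Lu=e^{\phi}\operatorname{div}(e^{-\phi}H^{-1}\nabla u)$, for which $L\phi_k=-\phi_k$ exactly. Differentiating the Monge--Amp\`ere equation twice then gives $LH=\mathcal Q-H+H\,(D^2V\circ\nabla\phi)\,H$, where $\mathcal Q_{ij}=\Tr(H^{-1}\partial_iH\,H^{-1}\partial_jH)\succeq0$. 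Your identity reverses the signs of $H$ and of the cubic term. It is already inconsistent for the Gaussian: there $H\equiv I$ forces $LH=0$, while your right side is $I$ plus a nonnegative term. Contracting it with $T$ therefore does not give the supersolution inequality you claim. Integrating the correct identity gives only $\int\mathcal Q\,d\nu\preceq I$.

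More seriously, the core estimate is never carried out; it is left as ``or similar''. The inequality you display, $A\le\Tr(T^2)+\sqrt{A\Tr(T^2)}$, yields only $A\le\frac{3+\sqrt5}{2}\Tr(T^2)$, and there is no free parameter left to optimize down to $2$. With the correct $L$ and $u=\frac12\langle T\nabla\phi,\nabla\phi\rangle$, one has $Lu=-2u+\Tr(TH)$ and two exact identities. The first is $A=\int(\Gamma_2(u)-\Gamma(u))\,d\nu$. The second, with $w=T\nabla\phi$, is $2A=\int\langle w,(H+\mathcal Q+H\,(D^2V\circ\nabla\phi)\,H)w\rangle\,d\nu$. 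So the bound you need is an \emph{upper} bound on $\Gamma_2$. Every Bochner term, including the $D^2V$ term you call favourable, enters with the harmful sign. The weights $ww^T$ are not controlled by $\int\mathcal Q\,d\nu\preceq I$. This missing weighted estimate is precisely what Chen--Klartag's Lemmas~3.3--3.5 supply, in Letwin's anisotropic form, and nothing in your sketch replaces it.

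The Gaussian is also not the extremal case, since there $A=\Tr(T^2)$. The constant $2$ is reached in the limit of a one-sided exponential direction: $\phi(t)=e^t-t$, $H=e^t$, and $\int H^2\,d\nu=2$. Every step of a correct proof must be tight for that example. Finally, boundedness of $H$ and vanishing of boundary terms do not follow from smoothness alone. They need Klartag's maximum-principle bound and Letwin's regularity lemmas.
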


Existence of the representation and its smoothness under these
assumptions are supplied by the moment-measure theorem and its
regularity result, stated in \cite[Lemmas~2.1--2.2]{Letwin}.
In particular, $(\nabla\phi)_\#\nu=\mathcal L(X)$ and
$\nabla\phi:\mathbb R^n\to K$ is a diffeomorphism.

The following estimate adapts the Bessel inequality argument of~\citet[Lemma~3.7 and the proof of Theorem~1.2]{ChenKlartag},
using the weighted moment-map bound in
Lemma~\ref{lem:weighted-moment-map-energy}. We include the proof to record
the minor modifications needed for a general symmetric matrix $T$.

\begin{lemma}
\label{lem:sharp-third-pair-slot}
Let $X$ be an isotropic log-concave random vector in $\mathbb R^n$, and set
\[
 (S_k)_{ij}=\mathbb E[X_kX_iX_j],\qquad k=1,\ldots,n.
\]
Then, for every real symmetric matrix $T$,
\begin{equation}\label{eq:sharp-third-pair-slot}
 \sum_{k=1}^n\operatorname{Tr}(TS_kTS_k)
 \le4\operatorname{Tr}(T^2).
\end{equation}
Equivalently, the self-adjoint operator
$T\mapsto\sum_k S_kTS_k$ on symmetric matrices is bounded above by $4I$.
\end{lemma}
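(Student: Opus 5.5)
The plan is to follow the Chen--Klartag route: write the third-moment tensor through the moment map, integrate by parts once against $e^{-\phi}$, and then use Bessel's inequality for the orthonormal system $\{\partial_j\phi\}$ in $L^2(\nu)$, with Lemma~\ref{lem:weighted-moment-map-energy} as the only analytic input.

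\textbf{Reduction to the regular case.} By a standard approximation, restricting to large balls and mollifying the potential, and by continuity of third moments under such approximations, I will assume $X$ has a smooth density $e^{-V}$ on a bounded open convex set as in Lemma~\ref{lem:weighted-moment-map-energy}. Isotropy is restored by an affine change that converges to the identity, and both sides of \eqref{eq:sharp-third-pair-slot} are continuous in the law. Let $\nu=e^{-\phi}dy$ be the moment measure, so that $X=\nabla\phi(Y)$ with $Y\sim\nu$, and put $H=D^2\phi$.

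\textbf{The representation of $S_k$.} Isotropy reads
\[
 \int\partial_i\phi\,\partial_j\phi\,d\nu=\delta_{ij},
\]
so $\{\partial_j\phi\}_j$ is orthonormal in $L^2(\nu)$. The identity $\int\partial_k\phi\,g\,d\nu=\int\partial_kg\,d\nu$ follows from $\partial_k e^{-\phi}=-\partial_k\phi\,e^{-\phi}$, with boundary terms controlled by the decay of $\nu$ from the regularity results of \cite{Letwin}. Applying it with $g=\partial_i\phi\,\partial_j\phi$ gives the Chen--Klartag identity
\[
 (S_k)_{ij}=\int\bigl(H_{ik}\,\partial_j\phi+H_{jk}\,\partial_i\phi\bigr)d\nu ,
 \qquad\text{that is,}\qquad S_k=A_k+A_k^{\top},
\]
where $(A_k)_{ij}=\int H_{ik}\,\partial_j\phi\,d\nu$.

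\textbf{Bessel step and bookkeeping.} The left side of \eqref{eq:sharp-third-pair-slot} is the quadratic form $Q(S)=\sum_k\Tr(TS_kTS_k)$ on the $3$-tensor $S$. Since $S$ is fully symmetric, I will write $Q(S)=\langle \mathcal T S,S\rangle$, where $\mathcal T$ acts by $T$ in two slots. Substituting $S_k=A_k+A_k^{\top}$ in one factor only gives
\[
 Q(S)=2\sum_{k}\Tr\bigl(TA_kTS_k\bigr).
\]
This equals $2\int\Tr\bigl(T\,G(y)\bigr)\,d\nu$ for a function $G$ that is linear in $H(y)$ and in the coefficients $\int(\cdot)\,\partial_j\phi\,d\nu$ of $TS_kT$. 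Equivalently, $Q(S)=2\sum_j\bigl\langle c_j,\int F\,\partial_j\phi\,d\nu\bigr\rangle$, where $F(y)$ is the tensor $THT$ (more precisely, $TH$ paired with $T$), viewed as an element of $L^2(\nu)$ with values in matrices, and $c_j$ are the matrix slices of $S$ contracted with $T$. Cauchy--Schwarz combined with Bessel, $\sum_j\bigl\|\int F\,\partial_j\phi\,d\nu\bigr\|^2\le\int\|F\|^2d\nu$, then gives
\[
 Q(S)\le 2\,Q(S)^{1/2}\Bigl(\int\Tr(THTH)\,d\nu\Bigr)^{1/2}.
\]
Hence $Q(S)\le4\int\Tr(THTH)\,d\nu$. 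Combining this with \eqref{eq:weighted-moment-map-input} would give $Q(S)\le8\Tr(T^2)$, which is off by a factor $2$.

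\textbf{Main obstacle.} The hard step is recovering the sharp constant $4$, which depends on using the symmetry of $S$ efficiently. I would try the Cauchy--Schwarz step inside the symmetric subspace, where $Q$ may be indefinite if $T$ is not positive. To handle this, I would first split $T=T_+-T_-$, or better, diagonalize $T$ and work entrywise, so that the pairing becomes a genuine inner product. I would then check that the symmetrization $A_k\mapsto A_k+A_k^{\top}$ costs only a factor $\sqrt2$ rather than $2$: the two terms are orthogonal projections of $H\otimes\nabla\phi$ onto the symmetric part. This should give
\[
 Q(S)\le 2\int\Tr(THTH)\,d\nu\le4\Tr(T^2),
\]
as in \citet[Lemma~3.7]{ChenKlartag}. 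The operator formulation then follows because $T\mapsto\sum_kS_kTS_k$ is self-adjoint for the trace inner product, and its quadratic form is $Q$.
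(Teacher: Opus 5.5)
\textbf{Gap: the constant.} Two parts of your plan are fine: the identity $S_k=A_k+A_k^{\top}$ is correct, and reducing an indefinite $T$ to $|T|$ by diagonalizing is what the paper does. The step that is supposed to produce the constant $4$, however, fails, and the mechanism you propose for it is false. Integrate by parts once more: $(A_k)_{ij}=\int\partial_j\phi\,\partial_{ik}\phi\,d\nu=\int\partial_{ijk}\phi\,d\nu$. You can also see this directly from the full symmetry of $S$ applied to your own identity. Either way, $(A_k)_{ij}$ is symmetric in all three indices, so $A_k^{\top}=A_k$ and $\int\partial_k\phi\,H\,d\nu=A_k=\tfrac12S_k$. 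This is exactly the Chen--Klartag identity \eqref{eq:third-tensor-steinkernel-coefficient}. The two terms of your symmetrization are therefore equal, not orthogonal, and $\|A_k+A_k^{\top}\|=2\|A_k\|$, so no factor $\sqrt2$ can be gained there. Your Cauchy--Schwarz step is also lossless, since $A_k$ is proportional to $S_k$. With only the ingredients you list, the argument stops at $\sum_k\Tr(TS_kTS_k)\le4\int\Tr(THTH)\,d\nu\le8\Tr(T^2)$.

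\textbf{The missing idea.} The loss is in the Bessel step. You discard the zeroth-order component of $H$, and recovering it requires two first-order facts:
\begin{itemize}
\item $\int\partial_k\phi\,d\nu=\mathbb EX_k=0$;
\item $\int H\,d\nu=\int\nabla\phi\otimes\nabla\phi\,d\nu=I$, by integration by parts and isotropy. This is the paper's mean identity $\mathbb E\tau=I$.
\end{itemize}
Hence the constant function $1$ can be adjoined to the orthonormal system $\partial_1\phi,\dots,\partial_n\phi$ in $L^2(\nu)$. For $T\succeq0$, apply Bessel's inequality for $\{1,\partial_1\phi,\ldots,\partial_n\phi\}$ entrywise to $T^{1/2}HT^{1/2}$. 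Equivalently, apply the paper's Bessel inequality to $T^{1/2}(H-I)T^{1/2}$. Combined with \eqref{eq:weighted-moment-map-input}, this gives
\[
 \Tr(T^2)+\tfrac14\sum_k\Tr(TS_kTS_k)\le\int\Tr(THTH)\,d\nu\le2\Tr(T^2),
\]
which is the sharp constant $4$. Your intermediate target $\sum_k\Tr(TS_kTS_k)\le2\int\Tr(THTH)\,d\nu$ does hold for $T\succeq0$. It holds only as a consequence of this mean-subtracted bound together with \eqref{eq:weighted-moment-map-input}, not through any orthogonality in the symmetrization.
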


\begin{proof}
First assume the hypotheses of
Lemma~\ref{lem:weighted-moment-map-energy}, and define the symmetric
matrix field $\tau$ by
\[
 \tau(\nabla\phi(y))=D^2\phi(y).
\]
The mean identity in \cite[Equation~(11)]{ChenKlartag} gives
$\mathbb E\tau(X)=I$. Their third-moment identity
\cite[Lemma~3.7]{ChenKlartag}, written in the variable
$X=\nabla\phi(Y)$ with $Y\sim\nu$, is
\begin{equation}\label{eq:third-tensor-steinkernel-coefficient}
 \mathbb E[X_k(\tau(X)-I)]=\tfrac12S_k.
\end{equation}
The cited weighted estimate becomes
$\mathbb E\operatorname{Tr}(T\tau T\tau)\le2\operatorname{Tr}(T^2)$
under this same change of variables.

For $T\succeq0$, apply Bessel's inequality coordinate by coordinate
to the matrix-valued function $T^{1/2}(\tau-I)T^{1/2}$.
The functions $X_1,\ldots,X_n$ are orthonormal in $L^2$ by isotropy.
Using \eqref{eq:third-tensor-steinkernel-coefficient}, we obtain
\begin{align*}
 \frac14\sum_k\operatorname{Tr}(TS_kTS_k)
 &\le\mathbb E\operatorname{Tr}\bigl(T(\tau-I)T(\tau-I)\bigr)=\mathbb E\operatorname{Tr}(T\tau T\tau)-\operatorname{Tr}(T^2)
 \le\operatorname{Tr}(T^2).
\end{align*}
The equality uses $\mathbb E\tau=I$; the final inequality is
\eqref{eq:weighted-moment-map-input}.

For an arbitrary symmetric $T$, choose an orthonormal eigenbasis and
write its eigenvalues as $t_1,\ldots,t_n$. In this basis,
\[
 \sum_k\operatorname{Tr}(TS_kTS_k)
 =\sum_{k,i,j}t_it_j(S_k)_{ij}^2
 \le\sum_{k,i,j}|t_i||t_j|(S_k)_{ij}^2
 =\sum_k\operatorname{Tr}(|T|S_k|T|S_k).
\]
Apply the positive-semidefinite case to $|T|$ and use
$\operatorname{Tr}(|T|^2)=\operatorname{Tr}(T^2)$.

For a general isotropic log-concave law, use the approximating laws
from \citet[Lemma~A.1]{Letwin}. They satisfy the regularity assumptions
above, remain isotropic, and converge in every polynomial moment of
degree at most four. Their third-moment matrices therefore converge
entrywise to each $S_k$. For a fixed $T$, both sides of
\eqref{eq:sharp-third-pair-slot} pass to the limit, proving the assertion.
\end{proof}

\begin{remark}
The estimate $\kappa_n^2\le8$ by
\citet[Proof of Theorem~1.1]{Letwin}, gives the one-index bound $\sum_kS_k^2\preceq8I$.
The constant $4$ in
\eqref{eq:sharp-third-pair-slot} controls the interaction of two indices.
Keeping these estimates separate improves the quadratic dependence
on tensor rank later.
\end{remark}

\section{Coefficient Induction by Joint Maximization}\label{sec:static-coefficients}
\label{sec:concave-coefficient-barrier}

The third-moment estimates above control the covariance terms
that arise when the measure is tilted. We now use this control
to derive a criterion for bounding the Appell coefficient at
degree $k+1$ from bounds through degree $k$. For a normalized
test function, we divide its squared coefficient norm at degree
$k+1$ by a positive concave function of its squared coefficient
norm at degree $k$. We maximize this quotient jointly over
the isotropic log-concave measure and the test function.
Varying the measure gives a maximum-principle inequality,
while varying the test function gives an identity for the two
highest-order cumulant terms in that inequality. This identity
avoids separate estimates for those cumulants. The lower-degree
bounds control the remaining terms, reducing the induction step
to a scalar condition on the concave function. The next section constructs these denominators by inverting decreasing branches of explicit quadratic and cubic polynomials.

All symmetric tensors carry the ordered Hilbert--Schmidt norm. For a
probability law $\mu$ with a moment generating function near the origin,
write
\[
 M_\mu(z)=\int e^{z\cdot x}\,d\mu(x),\qquad
 \kappa_j^\mu=D^j\log M_\mu(0).
\]
The Appell tensors are defined by the classical multivariate generating
function; see, for example,~\citet[Section~2.3, Equation~(11)]{AnshelevichAppell}.
In our tensor notation,
\begin{equation}\label{full-Appell-definition}
 \frac{e^{z\cdot x}}{M_\mu(z)}
 =\sum_{j=0}^{\infty}
    \frac{\langle A_j^\mu(x),z^{\otimes j}\rangle}{j!}.
\end{equation}
The last identity may be read coefficientwise. In particular, $A_j^\mu$
is a polynomial tensor of degree $j$, determined by finitely many
moments. Put
\[
 B_j^\mu(f)=\int fA_j^\mu\,d\mu,
 \qquad
 c_j(\mu)=\frac1{j!}
   \sup_{\|U\|=1}\|\langle A_j^\mu,U\rangle\|_{L^2(\mu)}.
\]
For a symmetric tensor $T$, write
$P_j^\mu[T](x)=\langle T,A_j^\mu(x)\rangle$.
The identity $\langle B_j(f),T\rangle=\int fP_j^\mu[T]\,d\mu$
and Hilbert-space duality show that $j!c_j(\mu)$ is the operator norm
of $f\mapsto B_j^\mu(f)$.

We shall use a positive, decreasing, concave denominator $\phi$, normalized
by $\phi(0)=1$, and write $q=-\phi'/\phi$. Proposition~\ref{prop:one-step-coefficient}
gives a criterion for passing from any prescribed initial bounds to the
next one. This freedom lets us retain the more accurate estimates in
the first few degrees.

\subsection{Initial Bounds and Fixed-dimensional Compactness}

For an isotropic law, $A_1(x)=x$ and $A_2(x)=x^{\otimes2}-I$.
Bessel's inequality therefore gives
\begin{equation}\label{eq:barrier-linear-seed}
 \|B_1(f)\|^2\le\|f\|_2^2.
\end{equation}
Equality holds for nonzero linear functions, proving $c_1=1$.
Letwin's quadratic variance inequality
\cite[Theorem~1.2, Equation~(1.2)]{Letwin} gives, for every symmetric
matrix $U$,
\[
 \operatorname{Var}(X^TU X)\le8\|U\|_{\mathrm{HS}}^2.
\]
Consequently
\[
 |\langle B_2(f),U\rangle|
 \le\|f\|_2\sqrt{\operatorname{Var}(X^TU X)}
 \le\sqrt8\,\|f\|_2\|U\|_{\mathrm{HS}},
\]
and hence
\begin{equation}\label{eq:barrier-quadratic-seed}
 \|B_2(f)\|^2\le8\|f\|_2^2.
\end{equation}

For fixed dimension $n$, let $\mathcal C_n$ be the class of isotropic
log-concave laws. Lemma~\ref{lem:static-compactness} states that this
class is weakly compact, that its polynomial moments are continuous,
and that its members have a common local exponential-moment bound.
The constants in these compactness statements may depend on $n$;
they do not enter any estimate in the induction.

For each fixed degree $d$, let $p(x)$ be the vector of monomials of
degree at most $d$. Its moment Gram matrix
\[
 \mathsf G_d(\mu)=\int p(x)p(x)^T\,d\mu(x)
\]
is positive definite. Indeed, an isotropic log-concave law has a
density positive on a nonempty open set, and a nonzero polynomial
cannot vanish there almost everywhere. Moment continuity and
compactness imply that the least eigenvalue of $\mathsf G_d(\mu)$
has a positive lower bound on $\mathcal C_n$. Thus the set of pairs
$(\mu,f)$, with $\mu\in\mathcal C_n$, $f$ a polynomial of degree at
most $d$, and $\|f\|_{L^2(\mu)}=1$, is compact when polynomials are
identified with their coefficient vectors. No constant from this
fixed-dimensional compactness argument enters the final bound.

\subsection{Differential Identities under Tilting}

Fix an isotropic law and a polynomial $f$. For $\theta$ near zero
and $s\ge0$, consider the normalized tilt
\begin{equation}\label{eq:barrier-static-tilt}
 d\mu_{\theta,s}(x)
 =\frac{e^{\theta\cdot x-s|x|^2/2}\,d\mu(x)}
        {\int e^{\theta\cdot y-s|y|^2/2}\,d\mu(y)}.
\end{equation}
Write $m$ and $\Sigma$ for its mean and covariance, and define
\[
 g=\mathbb E_{\theta,s}f^2,\qquad
 E_j=\langle B_j,(\Sigma^{-1})^{\otimes j}B_j\rangle,
 \qquad
 \mathcal D=\left(\partial_s+\tfrac12\Delta_\theta\right)\Big|_{(0,0)}.
\]
The derivative in $s$ is a right derivative. All quantities without
parameters in the following calculations are evaluated at the
isotropic base point. For scalar functions of the parameters,
\begin{equation}\label{eq:barrier-product-rule}
 \mathcal D(ab)=a\mathcal Db+b\mathcal Da
               +\langle\nabla_\theta a,\nabla_\theta b\rangle.
\end{equation}

For a fixed polynomial $F$, put $v(\theta,s)=\mathbb E_{\theta,s}F$
and $Y=X-m$. Differentiation of the normalizing denominator yields
\[
 \nabla_\theta v=\mathbb E[(F-v)Y],\qquad
 D_\theta^2v=\mathbb E[(F-v)Y^{\otimes2}],\qquad
 \partial_s v=-\tfrac12\operatorname{Cov}(F,|X|^2).
\]
Taking the trace of the second identity and expanding
$|X|^2=|Y|^2+2m\cdot Y+|m|^2$ gives
\begin{equation}\label{eq:barrier-expectation-identity}
 (\partial_s+\tfrac12\Delta_\theta)v
 =-m\cdot\nabla_\theta v.
\end{equation}
In particular, $\mathcal Dg=0$.

Identity~\eqref{eq:barrier-expectation-identity} is the
deterministic form of the martingale relation for tilted
expectations used in \citet[Section~3]{KlartagLogarithmic}. There, this relation
compares variances along a stochastic tilt, and covariance
estimates along the process lead to a spectral-gap bound.
Related deterministic heat-flow identities appear in
\citet[Sections~2 and~4.1]{KlartagPutterman}.
The additional ingredient in our work is the joint maximization of a
covariance-normalized coefficient quotient over the measure
and the test function. At the centered base point,
$\mathcal Dg=0$ simplifies the differentiation of its
denominator. Maximality with respect to the measure gives
a maximum-principle inequality, while stationarity with
respect to the test function gives
\eqref{eq:barrier-terminal-cancellation} later, which handles the
two highest-order cumulant terms together. These two
optimality relations reduce the coefficient induction
to a scalar barrier condition.

An additional linear tilt by $z$ replaces $\theta$ by $\theta+z$.
The definition of the Appell tensors therefore gives
$B_j(f)=D_\theta^j\mathbb E_{\theta,s}f$. Differentiating
\eqref{eq:barrier-expectation-identity} $j$ times at the base point
now gives
\begin{equation}\label{eq:barrier-coefficient-identities}
 \partial_{\theta_i}B_j=B_{j+1}(\cdot,e_i),
 \qquad
 \mathcal DB_j=-jB_j-R_j.
\end{equation}
Here $R_1=0$, and for $j\ge2$,
\begin{equation}\label{eq:barrier-remainder-definition}
 R_j(v_1,\ldots,v_j)
 =\sum_{a=2}^j\ 
   \sum_{\substack{J\subset\{1,\ldots,j\}\\|J|=a}}
 \left\langle\kappa_{a+1}(v_J,\cdot),
       B_{j-a+1}(v_{J^c},\cdot)\right\rangle.
\end{equation}
Indeed, the terms with no derivative on $m$ vanish because $m=0$;
the $j$ terms with one derivative on $m$ each contribute $-B_j$
because $D_\theta m=\Sigma=I$; the other terms are the displayed
subset sum because $D_\theta^a m=\kappa_{a+1}$.

Write $S_i=\kappa_3(\cdot,\cdot,e_i)$. The covariance identities are
\begin{equation}\label{eq:barrier-covariance-identities}
 \mathcal D\Sigma=-I,
 \qquad \partial_{\theta_i}\Sigma=S_i.
\end{equation}
For the first identity, write
$\Sigma=\mathbb E[XX^T]-mm^T$. At the isotropic base point,
\eqref{eq:barrier-expectation-identity} annihilates the expectations
of each fixed coordinate and each fixed quadratic monomial, while
the product rule contributes
$\sum_i(\partial_i m)(\partial_i m)^T=I$ to
$\mathcal D(mm^T)$. The second identity follows by differentiating
the covariance in a linear-tilt direction. The local exponential moments
from Lemma~\ref{lem:static-compactness} justify every differentiation,
including the polynomial factors
introduced by right differentiation in $s$.

\subsection{Covariance Energy with the Two-slot Bound}

For an isotropic law, the quadratic seed applied to
$f(x)=b\cdot x$ gives
\[
 \|\kappa_3(\cdot,\cdot,b)\|_{\mathrm{HS}}^2\le8|b|^2.
\]
By the symmetry of the cumulant tensor, this is equivalent to
\begin{equation}\label{eq:barrier-single-slot}
 \sum_i S_i^2\preceq8I.
\end{equation}
By Lemma~\ref{lem:sharp-third-pair-slot}, the corresponding two-slot estimate is
\begin{equation}\label{eq:barrier-two-slot}
 \sum_i\operatorname{Tr}(U S_i U S_i)\le4\|U\|_{\mathrm{HS}}^2
 \quad\text{for every symmetric matrix }U.
\end{equation}
At rank $j$, put
\[
 T=B_j,\quad W_i=B_{j+1}(\cdot,e_i),\quad
 V_i=\sum_{a=1}^j S_i^{(a)}T,\quad
 \mathcal S=\sum_{i,a}\|S_i^{(a)}T\|^2,
\]
where the superscript denotes the tensor slot on which a matrix
acts. The exact energy identity is
\begin{align}
 \mathcal DE_j={}&-jE_j-2\langle T,R_j\rangle
 +\sum_i\bigl(\|W_i\|^2-2\langle W_i,V_i\rangle
                         +\tfrac12\|V_i\|^2\bigr)
 +\tfrac12\mathcal S.
 \label{eq:barrier-exact-energy}
\end{align}
To check the metric terms, the first variation of
$((I+\varepsilon S)^{-1})^{\otimes j}$ is
$-\sum_aS^{(a)}$, and its coefficient of $\varepsilon^2$ is
\[
 \frac12\left[
       \left(\sum_a S^{(a)}\right)^2
                  +\sum_a(S^{(a)})^2\right].
\]
The first-order covariance drift $\mathcal D\Sigma=-I$ contributes
$jE_j$. The drift of the two copies of $T$ contributes
$-2jE_j-2\langle T,R_j\rangle$. The remaining second-order terms
are precisely the four terms in the sum and $\mathcal S$ in
\eqref{eq:barrier-exact-energy}.

The one-slot estimate gives $\mathcal S\le8jE_j$. For two distinct
slots $a,b$, fix the remaining tensor indices. Since $T$ is fully
symmetric, its two free indices form a symmetric matrix, to which
\eqref{eq:barrier-two-slot} applies. Summing over the fixed indices
gives
\[
 \sum_i\langle S_i^{(a)}T,S_i^{(b)}T\rangle\le4E_j.
\]
It follows that
\begin{equation}\label{eq:barrier-slot-sum}
 \sum_i\|V_i\|^2\le\mathcal S+4j(j-1)E_j.
\end{equation}
For $0\le\beta<1$, put
$\eta_\beta=(1-\beta)^{-1}-1/2$. Completing the square gives
\[
 \|W\|^2-2\langle W,V\rangle+\tfrac12\|V\|^2
 =\beta\|W\|^2-\eta_\beta\|V\|^2
  +(1-\beta)\left\|W-\frac V{1-\beta}\right\|^2.
\]
Since $\eta_\beta\ge1/2$, the two estimates for the slot sums imply
\begin{align*}
 -\eta_\beta\sum_i\|V_i\|^2+\tfrac12\mathcal S
 \ge-(\eta_\beta-\tfrac12)\mathcal S
                    -4\eta_\beta j(j-1)E_j
 \ge-\bigl[4\eta_\beta j^2+(4\eta_\beta-4)j\bigr]E_j.
\end{align*}
Because $\sum_i\|W_i\|^2=E_{j+1}$ at the base point, we obtain
\begin{equation}\label{eq:barrier-adjustable-energy}
 \mathcal DE_j\ge\beta E_{j+1}
   -\bigl[A_\beta j^2+(A_\beta-3)j\bigr]E_j
   -2\langle B_j,R_j\rangle,
 \qquad A_\beta=\frac4{1-\beta}-2.
\end{equation}
In particular,
\begin{equation}\label{eq:barrier-upper-energy}
 \mathcal DE_j\ge-(2j^2-j)E_j-2\langle B_j,R_j\rangle.
\end{equation}
We will use \eqref{eq:barrier-adjustable-energy} with $\beta=\alpha$
at the lower rank, and \eqref{eq:barrier-upper-energy} at the upper
rank. We retain the inner products with $R_j$ to bound the combined
contribution of the two terminal cumulant terms using stationarity
with respect to the test function.

\subsection{Extremal Quotient and the Test Function Variation}

Fix $k\ge2$ and positive numbers $h_1=1,h_2,\ldots,h_k$ for which
\[
 \|B_a(f)\|^2\le h_a\|f\|_2^2\qquad(1\le a\le k)
\]
holds for every isotropic log-concave law. The seeds allow $h_2=8$.
Let $\phi$ be positive, decreasing, concave, and $C^2$ on a
neighborhood of $[0,1]$, with $\phi(0)=1$.

Fix a dimension and maximize, over its isotropic log-concave laws
and nonzero test functions, the homogeneous quotient
\begin{equation}\label{eq:barrier-extremal-quotient}
 \mathcal F(\mu,f)
 =\frac{E_{k+1}}{g\phi(E_k/(h_kg))}.
\end{equation}
At an isotropic law, $E_j=\|B_j(f)\|^2$ and $g=\|f\|_2^2$.
Projection of $f$ onto polynomials of degree at most $k+1$ preserves
$B_k$ and $B_{k+1}$ while decreasing $g$. For fixed $y=E_k$, the
denominator as a function of $g$ has derivative
\[
 \frac{d}{dg}\left[g\phi\!\left(\frac y{h_kg}\right)\right]
 =\phi(u)-u\phi'(u)>0,
 \qquad u=\frac y{h_kg}.
\]
Both the original and projected test functions have $0\le u\le1$ by
the induction hypothesis. Hence projection cannot decrease the
quotient. If the projection vanishes, the numerator is zero and that
test functions can be discarded. Homogeneity allows normalization by $g=1$, after which
the compactness of the polynomial coefficient sets proved above
shows that the maximum is attained. The denominator is bounded
below by $\min_{[0,1]}\phi>0$ on this compact set.

At a maximizing law and polynomial, consider the family of tilted measures in \eqref{eq:barrier-static-tilt}. Although its covariance need not be
$I$, centering and whitening it preserves every covariance-normalized
energy $E_j$ if the test function is transformed by the same affine
change of variables. This follows directly from the generating
function: an affine map with linear part $L$ sends $B_j$ to
$L^{\otimes j}B_j$ and sends the covariance to $L\Sigma L^T$.
The transformed test function remains a polynomial of degree at most
$k+1$. Thus the quotient of the tilted pair cannot exceed the
maximum. Elementary differential test underlying elliptic maximum
principles \cite[Chapter~2, Section~1, p.~51]{ProtterWeinberger} gives
\[
 \nabla_\theta\mathcal F=0,\qquad
 D_\theta^2\mathcal F\preceq0,\qquad
 \partial_s^+\mathcal F\le0.
\]
The last inequality uses the admissible one-sided parameter $s\ge0$.
Taking the trace of the Hessian and adding the right derivative gives
$\mathcal D\mathcal F\le0$. This applies the maximum test directly
to the tilt parameters.

At this maximizer, normalize $g=1$ and write
\[
 x=E_{k+1},\qquad y=E_k,\qquad h=h_k,\qquad u=y/h,
 \qquad d=g\phi(y/(hg)).
\]
Along the tilt, the perspective chain rule and $\mathcal Dg=0$
give
\begin{equation}\label{eq:barrier-perspective-chain-rule}
 \mathcal Dd
 =\frac{\phi'(u)}h\mathcal Dy
  +\frac{\phi''(u)}{2h^2g}
       |\nabla_\theta y-hu\nabla_\theta g|^2
 \le\frac{\phi'(u)}h\mathcal Dy.
\end{equation}
Since $x=\mathcal Fd$ and $\nabla_\theta\mathcal F=0$ at the
maximum, the product rule gives
\begin{equation}\label{eq:barrier-maximum-inequality}
 0\ge d\mathcal D\mathcal F
   =\mathcal Dx-\mathcal F\mathcal Dd
   \ge\mathcal Dx+\frac{xq(u)}h\mathcal Dy.
\end{equation}

We also vary the maximizing polynomial. For a vector $b$, the
generating function gives
\begin{equation}\label{eq:barrier-linear-cumulant-identity}
 B_a(x\mapsto b\cdot x)=\kappa_{a+1}(\cdot,b).
\end{equation}
Take $b=B_1(f)$ and $\ell(x)=b\cdot x$. Then
$\langle f,\ell\rangle=|B_1(f)|^2$. Differentiating
\eqref{eq:barrier-extremal-quotient} at $f$ in the direction $\ell$
and using maximality gives
\begin{align}
\langle B_{k+1},\kappa_{k+2}(\cdot,B_1)\rangle
  +\frac{xq(u)}h
       \langle B_k,\kappa_{k+1}(\cdot,B_1)\rangle
 =x(1+uq(u))|B_1|^2
 \le x(1+uq(u)).
 \label{eq:barrier-terminal-cancellation}
\end{align}
For completeness, the differentials of $x,y,g$ in this direction
are respectively
\[
 2\langle B_{k+1},\kappa_{k+2}(\cdot,B_1)\rangle,
 \quad2\langle B_k,\kappa_{k+1}(\cdot,B_1)\rangle,
 \quad2|B_1|^2.
\]
The differential of the denominator is
$(\phi-u\phi')\,dg+(\phi'/h)\,dy$; substituting these expressions
proves the equality in \eqref{eq:barrier-terminal-cancellation}.
The inequality uses \eqref{eq:barrier-linear-seed} and $g=1$.
This identity estimates the two highest cumulants together without
any separate bound at the next rank.

\subsection{Contractions of the Lower-degree Terms}

Remove the last summand from \eqref{eq:barrier-remainder-definition}
by writing
\[
 R_j=\kappa_{j+1}(\cdot,B_1)+R_j^\circ,
\]
where $R_j^\circ$ contains $2\le a\le j-1$ and is zero for $j=2$.
For $a\le k$, the induction hypothesis and
\eqref{eq:barrier-linear-cumulant-identity} give
\begin{equation}\label{eq:barrier-cumulant-contraction}
 \|\kappa_{a+1}(\cdot,b)\|^2\le h_a|b|^2.
\end{equation}
Consequently a contraction of $\kappa_{a+1}$ with $B_b$ in one slot
has norm at most $\sqrt{h_a}\|B_b\|$. To verify the absence of a
dimension factor, fix all indices of $B_b$ except its contracted
index, apply \eqref{eq:barrier-cumulant-contraction} to the resulting
vector, and then sum the squared inequality over the fixed indices.

\subsection{A One-step Induction Criterion}
\label{subsec:one-step-coefficient}

The preceding identities reduce a coefficient bound to a scalar barrier.
The useful point is that the bounds already proved at small degrees may
be retained individually; they need not lie on a single geometric curve.

\begin{proposition}[One-step coefficient induction]
\label{prop:one-step-coefficient}
Fix $k\ge2$, write $j=k+1$, and let $h_1=1,h_2,\ldots,h_k$ be
positive constants such that, for every isotropic log-concave law,
\[
 \|B_a(f)\|^2\le h_a\|f\|_2^2
 \qquad(1\le a\le k).
\]
Let $h_j>0$ be a proposed next bound, and put
\[
 d_\ell=\sum_{a=2}^{\ell-1}
       \binom{\ell}{a}\sqrt{h_a h_{\ell-a+1}}
 \qquad(\ell=k,j),
\]
where an empty sum is zero. Every index on the right is at most $k$.
Choose $0<\alpha<1$ and a positive, decreasing, concave $C^2$ function
$\phi$ on a neighborhood of $[0,1]$, with $\phi(0)=1$. Set
\[
 A=\frac4{1-\alpha}-2,\qquad q=-\frac{\phi'}{\phi}.
\]
If, for $0\le u\le1$,
\begin{align}
 q(u)\left[
  \alpha\frac{h_j}{h_k}\phi(u)
  -\bigl(Ak^2+(A-3)k+2\bigr)u
  -\frac{2d_k}{\sqrt{h_k}}\sqrt u
 \right]
 >2j^2-j+2+\frac{2d_j}{\sqrt{h_j\phi(u)}},
 \label{eq:one-step-coefficient-condition}
\end{align}
then $\|B_j(f)\|^2\le h_j\|f\|_2^2$ for every isotropic
log-concave law.
\end{proposition}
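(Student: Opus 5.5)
The plan is to argue by contradiction at a maximizer of the extremal quotient \eqref{eq:barrier-extremal-quotient}, with $h=h_k$. By the compactness discussion, the maximum of $\mathcal F$ is attained at some isotropic law $\mu$ and polynomial $f$ with $g=1$. At that point $x=E_{j}$, $y=E_k$ and $u=y/h\in[0,1]$, the last by the induction hypothesis. Since $\phi\le\phi(0)=1$, it suffices to show that the maximal value satisfies $\mathcal F^*=x/\phi(u)\le h_j$. Then every normalized pair has $E_j\le h_j\phi(u)\le h_j$, which is the claim. So I assume $x>h_j\phi(u)$, in particular $x>0$, and aim to contradict \eqref{eq:one-step-coefficient-condition}.

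\textbf{Step 1 (assemble the maximum inequality).} Since $q\ge0$, I can insert lower bounds for $\mathcal Dx$ and $\mathcal Dy$ into \eqref{eq:barrier-maximum-inequality}.
\begin{itemize}
\item For $\mathcal Dx$ I use \eqref{eq:barrier-upper-energy} at rank $j$: $\mathcal Dx\ge-(2j^2-j)x-2\langle B_j,R_j\rangle$.
\item For $\mathcal Dy$ I use \eqref{eq:barrier-adjustable-energy} at rank $k$ with $\beta=\alpha$: $\mathcal Dy\ge\alpha x-[Ak^2+(A-3)k]y-2\langle B_k,R_k\rangle$, using $E_{k+1}=x$.
\end{itemize}
Next I split $R_j=\kappa_{j+1}(\cdot,B_1)+R_j^\circ$ and $R_k=\kappa_{k+1}(\cdot,B_1)+R_k^\circ$. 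The two terminal pieces enter exactly in the combination
\[
-2\Bigl[\langle B_j,\kappa_{k+2}(\cdot,B_1)\rangle+\tfrac{xq}{h}\langle B_k,\kappa_{k+1}(\cdot,B_1)\rangle\Bigr],
\]
since $j+1=k+2$. By the stationarity relation \eqref{eq:barrier-terminal-cancellation}, this combination is at least $-2x(1+uq(u))$.

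\textbf{Step 2 (lower-degree contractions).} Each summand of $R_j^\circ$ contracts $\kappa_{a+1}$ with $B_{j-a+1}$ in one slot, for $2\le a\le j-1$. By \eqref{eq:barrier-cumulant-contraction} and the slot-freezing argument, such a summand has norm at most $\sqrt{h_a}\,\|B_{j-a+1}\|\le\sqrt{h_ah_{j-a+1}}$, using $g=1$. The admissible indices are $a\le k$ and $j-a+1\le k$. There are $\binom ja$ subsets of each size $a$, so Cauchy--Schwarz gives
\[
|\langle B_j,R_j^\circ\rangle|\le d_j\sqrt x,\qquad |\langle B_k,R_k^\circ\rangle|\le d_k\sqrt y=d_k\sqrt{hu}.
\]

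\textbf{Step 3 (reduce to the scalar condition).} I combine Steps 1 and 2 and divide by $x>0$. This gives
\[
0\ge-(2j^2-j)-2-\frac{2d_j}{\sqrt x}+q(u)\Bigl[\alpha\frac xh-\bigl(Ak^2+(A-3)k+2\bigr)u-\frac{2d_k}{\sqrt h}\sqrt u\Bigr].
\]
Here the term $-2uq$ coming from the terminal cancellation has been absorbed as the $+2$ inside the coefficient of $u$. Under the contradiction hypothesis $x>h_j\phi(u)$, and using $q\ge0$, I can replace $\alpha x/h$ by the smaller quantity $\alpha h_j\phi(u)/h_k$. I can also replace $2d_j/\sqrt x$ by the larger quantity $2d_j/\sqrt{h_j\phi(u)}$. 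The right-hand side is then bounded below by the difference of the two sides of \eqref{eq:one-step-coefficient-condition}. That difference is strictly positive, which contradicts the displayed inequality. Hence $\mathcal F^*\le h_j$.

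I expect the main obstacle to be the bookkeeping in Steps 1--2. Three points need care: the terminal terms of $R_j$ and $R_k$ must match the two inner products in \eqref{eq:barrier-terminal-cancellation} exactly, including the factor $xq/h$ and the sign; every index in $d_k,d_j$ must stay at most $k$, so that only hypothesized bounds are used; and the subset-sum contractions must be bounded without any dimension factor. The final comparison in Step 3 is monotone in $x$ and should be routine.
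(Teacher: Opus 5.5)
Your proposal is correct and follows the paper's proof essentially step for step. Both argue by contradiction at a normalized maximizer with $x>h_j\phi(u)$, insert \eqref{eq:barrier-upper-energy} at rank $j$ and \eqref{eq:barrier-adjustable-energy} with $\beta=\alpha$ at rank $k$ into \eqref{eq:barrier-maximum-inequality}, use the stationarity identity \eqref{eq:barrier-terminal-cancellation} for the two terminal cumulant terms and Cauchy--Schwarz with $\|R_\ell^\circ\|\le d_\ell$ for the rest, and finish with the monotone replacements in $x$. The only cosmetic difference is that you take $q\ge0$ from the monotonicity of $\phi$, whereas the paper notes $q>0$ from the condition; either suffices.
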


\begin{proof}
Fix a dimension and suppose the proposed bound fails. Maximize
\[
 \frac{E_j}{g\phi(E_k/(h_kg))}
\]
over isotropic laws and nonzero test functions, as in the preceding
compactness argument. Because $\phi\le1$, the maximum is larger than
$h_j$. At a maximizing pair, normalize $g=1$ and write
\[
 x=E_j,\qquad y=E_k=h_ku,\qquad
 w=\frac{xq(u)}{h_k}.
\]
Then $x>h_j\phi(u)$. The maximum-principle inequality
\eqref{eq:barrier-maximum-inequality} and the stationarity identity
\eqref{eq:barrier-terminal-cancellation} apply unchanged.

Separate the terminal cumulant from each remainder:
\[
 R_\ell=\kappa_{\ell+1}(\cdot,B_1)+R_\ell^\circ.
\]
The contraction estimate
\eqref{eq:barrier-cumulant-contraction} bounds each summand of
$R_\ell^\circ$ by
$\sqrt{h_a h_{\ell-a+1}}$, and therefore
$\|R_\ell^\circ\|\le d_\ell$ for $\ell=k,j$.
Apply \eqref{eq:barrier-upper-energy} at rank $j$ and
\eqref{eq:barrier-adjustable-energy} with $\beta=\alpha$ at rank $k$.
The two terminal terms together cost at most $2x(1+uq)$ by the
test function variation. Cauchy--Schwarz is used only on the remaining
terms. We obtain
\begin{align*}
 w\alpha x-(2j^2-j)x
 -w\bigl(Ak^2+(A-3)k\bigr)y
 -2x(1+uq)-2\sqrt{x}\,d_j-2w\sqrt{y}\,d_k \leq 0
\end{align*}
Divide by $x>0$ and use $y=h_ku$:
\[
 q\left[\alpha\frac{x}{h_k}
       -\bigl(Ak^2+(A-3)k+2\bigr)u
       -\frac{2d_k}{\sqrt{h_k}}\sqrt u\right]
       -(2j^2-j+2)-\frac{2d_j}{\sqrt{x}} \leq 0
\]
Condition \eqref{eq:one-step-coefficient-condition} implies $q>0$.
Since $x>h_j\phi(u)$, the last displayed left-hand side is strictly
larger than the positive margin in that condition. This is a
contradiction. The next bound holds in each dimension, with the same
constant $h_j$.
\end{proof}

\section{Coefficient Bounds from Two Polynomial Barriers}
\label{sec:coefficient-certificates}

Section~\ref{sec:static-coefficients} established the linear and
quadratic coefficient bounds and reduced each subsequent induction
step to Proposition~\ref{prop:one-step-coefficient}. Joint maximization
over the measure and the test function handles the two highest-order
cumulant terms, while the remaining contractions are controlled by
the bounds already proved. To establish a proposed bound at the next
degree, it therefore suffices to choose a positive, decreasing,
concave denominator $\phi$ satisfying
\eqref{eq:one-step-coefficient-condition} throughout $[0,1]$.
We now construct such denominators and verify the criterion explicitly.

We define each denominator as the inverse of a polynomial on a
strictly decreasing, concave branch. The inverse is again decreasing
and concave, and the verification reduces to polynomial identities
and elementary inequalities. A quadratic polynomial gives the
degree-three bound. For the higher degrees, we estimate the sums
$d_\ell$ of lower-degree terms, treating the degree-two bound separately
from the proposed geometric sequence. A single cubic polynomial then
verifies the criterion at every degree $d\ge4$, yielding the geometric
coefficient bound stated below.

\begin{theorem}\label{thm:geometric-coefficients}
For every isotropic log-concave probability measure,
\begin{equation}\label{eq:final-coefficient-tail}
 c_1=1,\qquad c_2^2\le2,\qquad
 c_d^2\le B K^{d-3}\quad(d\ge3),
 \qquad B=\frac{121}{4},\quad K=\frac{223}{10}.
\end{equation}
\end{theorem}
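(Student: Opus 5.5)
The plan is to convert the statement into bounds on $h_d$, with $h_d=(d!)^2c_d^2$. By duality, $\|B_d(f)\|^2\le (d!)^2c_d(\mu)^2\|f\|_2^2$, so the target is $h_1=1$, $h_2=8$ (that is, $c_2^2\le 8/4=2$), and $h_d=(d!)^2BK^{d-3}$ for $d\ge3$. The first two bounds are exactly \eqref{eq:barrier-linear-seed} and \eqref{eq:barrier-quadratic-seed}, with equality for linear $f$ giving $c_1=1$. Every further degree is obtained from Proposition~\ref{prop:one-step-coefficient} by strong induction. Step $k\to k+1$ uses the already established $h_1,\dots,h_k$, and the claimed $h_{k+1}$ is the proposed next bound.

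\textbf{Degree three ($k=2$, $j=3$).} Here $d_2=0$ and $d_3=\binom32\sqrt{h_2h_2}=24$. The proposed bound is $h_3=36\cdot 121/4=1089$. I would pick $\alpha$ (say around $1/2$, to be tuned) and define $\phi$ as the inverse of a quadratic polynomial $P$ on a strictly decreasing concave branch, so that $\phi(P(t))=t$. Then
\[
 q(u)=-\frac{1}{\phi\,P'(\phi)},
\]
and condition \eqref{eq:one-step-coefficient-condition} becomes, after substituting $u=P(t)$ for $t\in[\phi(1),1]$, a polynomial inequality in $t$ together with a $\sqrt{P(t)}$ and a $1/\sqrt t$ term. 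I would bound these by elementary estimates, such as $\sqrt{P}\le$ a linear function and $1/\sqrt t\le 1/\sqrt{t_{\min}}$, and then check the resulting one-variable polynomial inequality. Concavity and decrease of $\phi$ follow from $P'<0$ and $P''\le 0$ on the branch.

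\textbf{Degrees $j\ge4$.} I would normalize $\rho_d=h_d/(d!)^2$, so that $\rho_1=1$, $\rho_2=2$, and $\rho_d=BK^{d-3}$. Then
\[
 \binom{\ell}{a}\sqrt{h_ah_{\ell-a+1}}=\ell!\,(\ell-a+1)\,\frac{(a-1)!\,(\ell-a)!}{(\ell-1)!}\cdot\frac{\sqrt{\rho_a\rho_{\ell-a+1}}}{(\ell-1)!\,/\ldots},
\]
and the quantities to control are $d_\ell/\sqrt{h_\ell}$ and $d_\ell/\sqrt{h_{\ell-1}}$. The key estimate is a uniform bound on these as multiples of $\ell$; the ratio $h_j/h_k=j^2K$ dominates the $Ak^2$ terms. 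I would split the sum defining $d_\ell$ into the endpoint terms $a=2$ and $a=\ell-1$, which involve $h_2$ and are handled separately, and the interior terms. The interior terms carry the factor $\sqrt{\rho_a\rho_{\ell-a+1}}=BK^{(\ell-5)/2}$, independent of $a$, times $\binom{\ell}{a}a!(\ell-a+1)!/\ell!$. That combinatorial sum is dominated by its near-endpoint terms and is $O(\ell)$ relative to $\ell!$, with an explicit constant that I would bound by a geometric tail.

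This reduces \eqref{eq:one-step-coefficient-condition} to an inequality of the form
\[
 q(u)\bigl[\alpha K j^2\phi(u)-(Ak^2+\dots)u-c_1k\sqrt u\bigr]>2j^2-j+2+c_2\,j/\sqrt{K\phi}.
\]
After dividing by $j^2$, this is uniform in $j\ge4$; the worst case is $j=4$ or $j\to\infty$, and monotonicity in $j$ is checked by grouping the coefficients. A single cubic $P$, whose inverse is $\phi$, should then satisfy the limiting scalar inequality with margin. Verification is again a polynomial inequality in $t$ on the branch.

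\textbf{Main obstacle.} The hardest part is the uniform-in-$j$ reduction: obtaining explicit, tight enough bounds on $d_\ell$ in the ratio form above so that the constants $B=121/4$ and $K=223/10$ leave positive margin at every $j\ge4$ simultaneously. This includes the transition degree $j=4$, where $h_2$, $h_3$ enter asymmetrically. I would first compute $j=4,5$ exactly and then prove a monotone-in-$j$ bound for the rescaled inequality, choosing the cubic's coefficients to fit the tightest of these cases.
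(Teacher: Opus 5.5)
Your architecture coincides with the paper's: the seeds $h_1=1$, $h_2=8$, one quadratic-inverse barrier for $j=3$, and one cubic-inverse barrier for all $j\ge4$, with degree two as the single exceptional term. However, the step you single out as the key estimate is miscounted, and the error changes the inequality the cubic must satisfy.

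Your own factor $\binom{\ell}{a}a!\,(\ell-a+1)!/\ell!$ equals $\ell-a+1$ exactly. It does not decay away from the endpoints. Write $h_a=CK^{a-2}(a!)^2e_a^2$ with $C=B/K$, $e_2=\sqrt{2/C}$ and $e_a=1$ for $a\ge3$. Then each split contributes $r\sqrt{h_\ell}\,(\ell-a+1)e_ae_{\ell-a+1}$ with $r=\sqrt B/K=55/223$, so for $\ell\ge4$
\[
 \frac{d_\ell}{\sqrt{h_\ell}}
 =r\Bigl[\frac{\ell(\ell-1)}2-1+(\ell+1)(e_2-1)\Bigr]\sim\frac{r\ell^2}{2}.
\]
Hence the terms $2d_k\sqrt u/\sqrt{h_k}$ and $2d_j/\sqrt{h_j\phi}$ in \eqref{eq:one-step-coefficient-condition} are of the same order $j^2$ as the main terms. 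After division by $j^2$ the limiting condition is
\[
 q(u)\bigl[\alpha K\phi(u)-Au-r\sqrt u\bigr]>2+\frac{r}{\sqrt{\phi(u)}}.
\]
Your reduced inequality, with corrections $c_1k\sqrt u$ and $c_2j/\sqrt{K\phi}$, instead tends to $q[\alpha K\phi-Au]>2$. The discarded terms are not negligible for these constants. The paper chooses $\alpha=17/33$ and $U(z)=Pz+Q-(P+Q)\bigl(\tfrac{17z+1}{18}\bigr)^3$ precisely so that they are absorbed through $r\sqrt u\le\frac18(u+1)$ and $2z+r\sqrt z<\frac{17}{8}z+\frac18$, which uses $r<1/4$. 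A cubic fitted to your limit has no reason to satisfy the true one. Once the count is corrected, the finite-degree step is also easier than you expect. With $t=1/j$, the excess over the limiting margin is a quadratic in $t$ without constant term that stays positive on $(0,1/4]$. So $j\to\infty$ is the binding case, and $j=4,5$ need no separate computation.

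At degree three there is no $\sqrt u$ term, since $d_2=0$. The only irrational term is $8/\sqrt{B\phi}=\frac{16}{11}\phi^{-1/2}$, and the margin is razor-thin: the paper's barrier has $U_3(1/2)-1=1/468$. Your suggested bound $\phi^{-1/2}\le\phi(1)^{-1/2}$ is too lossy there. With the constant right-hand side $c=17+\frac{16}{11}\phi(1)^{-1/2}$, every admissible $\phi$ lies below the solution of $-\phi'(\beta\phi-\gamma u)=c\phi$, where $\beta=\frac92\alpha B$ and $\gamma=6A-4$. Its inverse is $u=\frac{\beta}{\gamma-c}(v-v^{p})$ with $p=\gamma/c$, whose peak value is $\frac{\beta}{\gamma}p^{1/(1-p)}$. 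Already for $\phi(1)=1/2$ this peak is about $0.996<1$ at the best $\alpha$. A larger $\phi(1)$ lowers $c$ only slightly, while the extremal barrier still turns back near $v\approx0.49$. So no barrier reaches $u=1$. You need an estimate that is nearly exact on all of $\phi\in(1/2,1]$. The paper multiplies by $\phi$ and uses $\sqrt v\le\frac23v+\frac38$, which reduces the criterion to the exact inverse identity $-\phi'\bigl[\frac92\alpha B\phi-36u\bigr]=18\phi+\frac9{16}$ for $U_3(v)=\frac7{117}(1-v)(65v+1)$ with $\alpha=7/13$.
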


\begin{proof}
The linear and quadratic bounds were proved in
\eqref{eq:barrier-linear-seed} and
\eqref{eq:barrier-quadratic-seed}. We first establish the cubic bound
and then apply Proposition~\ref{prop:one-step-coefficient} inductively.

\paragraph{The Cubic Coefficient.}
Take $k=2$, $j=3$, $h_2=8$, and the proposed bound $h_3=36B$.
The constants in Proposition~\ref{prop:one-step-coefficient} are
$d_2=0$ and $d_3=\binom32\sqrt{8\cdot8}=24$.
Consequently its criterion is
\begin{equation}\label{eq:short-cubic-criterion}
 q(u)\left[\frac92\alpha B\phi(u)-(6A-4)u\right]
 >17+\frac8{\sqrt{B\phi(u)}},\qquad
 A=\frac4{1-\alpha}-2,\quad q=-\frac{\phi'}{\phi}.
\end{equation}
Choose $\alpha=7/13$, so that $A=20/3$ and $6A-4=36$, and define
\[
 U_3(v)=\frac7{117}(1-v)(65v+1).
\]
Its derivatives and endpoint values are
\[
 U_3'(v)=\frac{448-910v}{117},\qquad
 U_3''(v)=-\frac{910}{117},\qquad U_3(1)=0,
\]
\[
 U_3(1/2)-1=\frac1{468}>0,\qquad
 U_3'(1/2)=-\frac7{117}<0.
\]
Thus $U_3$ is strictly decreasing and concave on $[1/2,1]$, and its
range contains $[0,1]$. Its inverse $\phi$ on $[0,1]$ satisfies
\[
 \phi(0)=1,\qquad \phi>1/2,\qquad
 \phi'\!<0,\qquad
 \phi''(u)=-\frac{U_3''(\phi(u))}{U_3'(\phi(u))^3}<0.
\]
The derivative of $U_3$ is nonzero at the preimages of both endpoints;
therefore the same inverse extends smoothly to a neighborhood of
$[0,1]$.
Multiplying the displayed expression for $U_3'$ by $18v+9/16$ and
expanding $U_3$ gives
\[
 \left(18v+\frac9{16}\right)U_3'(v)
 =36U_3(v)-\frac{7623}{104}v
 =36U_3(v)-\frac92\alpha Bv.
\]
Writing $u=U_3(v)$ and using $\phi'(u)=1/U_3'(v)$ yields
\begin{equation}\label{eq:short-cubic-inverse-identity}
 -\phi'(u)\left[\frac92\alpha B\phi(u)-36u\right]
 =18\phi(u)+\frac9{16}.
\end{equation}
For $v>0$, the inequality
$\sqrt v\le(2/3)v+3/8$ follows from
$(\sqrt v-3/4)^2\ge0$. Since $8/\sqrt B=16/11<3/2$,
\[
 17v+\frac8{\sqrt B}\sqrt v
 <17v+\frac32\left(\frac23v+\frac38\right)
 =18v+\frac9{16}.
\]
Use this strict inequality in
\eqref{eq:short-cubic-inverse-identity} and divide by $\phi(u)>0$.
The result is \eqref{eq:short-cubic-criterion}.
Proposition~\ref{prop:one-step-coefficient} now proves
$\|B_3(f)\|^2\le36B\|f\|_2^2$, equivalently $c_3^2\le B$.

\paragraph{The Convolution with One Exceptional Degree.}
For the subsequent induction define
\[
 C=\frac BK=\frac{605}{446},\qquad
 h_1=1,\quad h_2=8,\quad
 h_d=CK^{d-2}(d!)^2\quad(d\ge3).
\]
Then $h_3=36B$, so the cubic estimate supplies the initial case.
The useful normalized constants are
\[
 r=\sqrt{\frac CK}=\frac{55}{223}<\frac14,
 \qquad e=\sqrt{\frac2C},\qquad
 \delta=\frac{e^2-1}{2}=\frac{287}{1210}\in(0,1/2).
\]
The sequence $h_d$ is geometric after its factorial normalization,
except at degree two. We record how this single exception enters
its convolution. For every $\ell\ge3$, put
\[
 d_\ell=\sum_{a=2}^{\ell-1}
       \binom{\ell}{a}\sqrt{h_a h_{\ell-a+1}},\qquad
 C_\ell=\frac{\ell(\ell-1)}2-1.
\]
We claim that
\begin{equation}\label{eq:short-one-exception-remainder}
 d_\ell\le r\sqrt{h_\ell}
                  [C_\ell+\delta(\ell+1)]\qquad(\ell\ge3).
\end{equation}
At $\ell=3$, the sum is $24$ and $r\sqrt{h_3}=6C$.
Their ratio is $4/C=2e^2=C_3+4\delta$, proving the claim in that case.
For $\ell\ge4$, set $b'=\ell-a+1$, and write
$e_2=e$, $e_a=1$ for $a\ge3$. Then
\[
 h_a=CK^{a-2}(a!)^2e_a^2\quad(a\ge2),
\]
and cancellation of factorials gives
\begin{align*}
 \binom{\ell}{a}\sqrt{h_a h_{b'}}
 &=\frac{\ell!}{a!(\ell-a)!}
       CK^{(\ell-3)/2}a!b'!e_a e_{b'}\\
 &=r\sqrt{h_\ell}\,b'e_a e_{b'}.
\end{align*}
The weights $b'$ range from $2$ to $\ell-1$, so their sum is
$\ell(\ell-1)/2-1=C_\ell$.
The two splits containing degree two are distinct. Their weights are
$\ell-1$ and $2$, and each contains exactly one exceptional factor.
Thus
\[
 \frac{d_\ell}{r\sqrt{h_\ell}}
 =C_\ell+(\ell+1)(e-1)
 \le C_\ell+\delta(\ell+1),
\]
where the last inequality follows from
$\delta-(e-1)=(e-1)^2/2\ge0$.
This proves \eqref{eq:short-one-exception-remainder}.
At an induction step with $j=k+1$, every coefficient entering
$d_k$ and $d_j$ has degree at most $k$; the proposed constant $h_j$
appears in \eqref{eq:short-one-exception-remainder} only as an
algebraic normalization.

\paragraph{One Cubic Denominator for All Subsequent Degrees.}
Choose
\[
 \alpha=\frac{17}{33},\qquad A=\frac4{1-\alpha}-2=\frac{25}{4},
 \qquad P=\frac{446}{165},\qquad Q=\frac{281}{8415},
\]
and set
\[
 U(z)=Pz+Q-(P+Q)\left(\frac{17z+1}{18}\right)^3.
\]
Writing $w=(17z+1)/18$, we have
\[
 U'(z)=P-\frac{17}{6}(P+Q)w^2,\qquad
 U''(z)=-\frac{289}{54}(P+Q)w.
\]
In particular $U''<0$ for $z\ge4/7$. The identities
$P=4K/33$ and $Q=(P-1)/51$ give
\begin{equation}\label{eq:short-tail-inverse-identity}
 \left(\frac{17}{8}z+\frac18\right)U'(z)
 =\frac{51}{8}U(z)+\frac18-\alpha Kz.
\end{equation}
Indeed the cubic terms on the two sides are both
$-(51/8)(P+Q)w^3$, the coefficients of $z$ agree because
$(17/4)P=\alpha K$, and the constants agree because
$51Q=P-1$.
The rational endpoint evaluations are
\[
 U(1)=0,\qquad
 U(4/7)-1=\frac{32117}{36673560}>0,\qquad
 U'(4/7)=-\frac{20953}{476280}<0.
\]
Thus $U$ is strictly decreasing on $[4/7,1]$ and has an inverse
$\phi$ on $[0,1]$, with
\[
 \phi(0)=1,\qquad \phi>4/7,\qquad
 \phi'<0,\qquad \phi''=-\frac{U''(\phi)}{U'(\phi)^3}<0.
\]
As with $U_3$, the derivative is nonzero at both relevant endpoints,
so this inverse is smooth on a neighborhood of $[0,1]$.
The elementary inequality $\sqrt t\le(t+1)/2$ and $r<1/4$ imply
\[
 r\sqrt u\le\frac18(u+1),\qquad
 2z+r\sqrt z<\frac{17}{8}z+\frac18\quad(z>0).
\]
For $u=U(z)\in[0,1]$, the sign $U'(z)<0$ and
\eqref{eq:short-tail-inverse-identity} therefore give
\begin{align*}
 \alpha Kz-Au-r\sqrt u+(2z+r\sqrt z)U'(z) >\alpha Kz-\frac{51}{8}u-\frac18
       +\left(\frac{17}{8}z+\frac18\right)U'(z)=0.
\end{align*}
Set $q=-\phi'/\phi=-1/(zU'(z))>0$ and divide by $-zU'(z)>0$.
We obtain the positive limiting margin
\begin{equation}\label{eq:short-bulk-margin}
 \mathcal M_\infty(u)
 :=q(u)[\alpha K\phi(u)-Au-r\sqrt u]
       -2-\frac r{\sqrt{\phi(u)}}>0.
\end{equation}

\paragraph{Verification in Every Finite Degree.}
Suppose the coefficient bounds have been established through degree
$k\ge3$, and put $j=k+1$. Then $h_j/h_k=Kj^2$.
Use \eqref{eq:short-one-exception-remainder} for $d_k$ and $d_j$
in Proposition~\ref{prop:one-step-coefficient}.
The difference between the two sides of its criterion, divided by
$j^2$, is bounded below by
\begin{align*}
 \mathcal M_k(u)={}&
 q\left[\alpha K\phi
   -\frac{Ak^2+(A-3)k+2}{j^2}u
   -\frac{2r}{j^2}[C_k+\delta(k+1)]\sqrt u\right]\\
 &-2+\frac1j-\frac2{j^2}
   -\frac{2r}{j^2\sqrt\phi}[C_j+\delta(j+1)].
\end{align*}
Put $t=1/j$, so $0<t\le1/4$. The three coefficients depending on
rank expand as
\[
 \frac{Ak^2+(A-3)k+2}{j^2}
       =A-(A+3)t+5t^2,
\]
\[
 \frac{2[C_j+\delta(j+1)]}{j^2}
       =1+(2\delta-1)t+(2\delta-2)t^2,
 \qquad
 \frac{2[C_k+\delta(k+1)]}{j^2}
       =1+(2\delta-3)t.
\]
Substitution gives
\[
 \mathcal M_k-\mathcal M_\infty
   =t\mathsf B+t^2\mathsf C,
\]
where
\begin{align*}
 \mathsf B&=1+(A+3)qu+(1-2\delta)\frac r{\sqrt\phi}
                       +(3-2\delta)rq\sqrt u,\\
 \mathsf C&=-5qu+(2-2\delta)\frac r{\sqrt\phi}-2.
\end{align*}
Because $0<\delta<1/2$ and $q>0$, we have $\mathsf B>0$.
At the other endpoint of the interval for $t$,
\[
 \mathsf B+\frac14\mathsf C
 =\frac12+(A+\tfrac74)qu
   +(\tfrac32-\tfrac52\delta)\frac r{\sqrt\phi}
   +(3-2\delta)rq\sqrt u>0.
\]
The function $\mathsf B+t\mathsf C$ is affine in $t$, so it is
positive throughout $[0,1/4]$. Consequently
$\mathcal M_k>\mathcal M_\infty>0$.
Proposition~\ref{prop:one-step-coefficient} proves the next coefficient
bound. The cubic seed starts the induction, and therefore
$c_d^2\le CK^{d-2}=BK^{d-3}$ for every $d\ge3$.
\end{proof}

\section{Curvature Estimates for Compatible Tensor Fields}\label{sec:full-hodge}

Section~\ref{sec:coefficient-certificates} completed the induction
and established geometric bounds on the normalized Appell coefficient
norms. These bounds control repeated centered integration of constant
tensors. To obtain a Poincar\'e bound, we need to extend this control
to arbitrary compatible tensor fields.

This section proves the curvature estimate needed for that extension.
To accomplish this, we use a weighted Helmholtz--Hodge decomposition.
The weighted divergence of a compatible tensor field $F$ can have
nonzero curl, so we split it into a compatible component and an
orthogonal correction carrying that curl. The curl depends
algebraically on $F$ and $D^2V$, with all derivatives of $F$ canceling.
To bound the energy of this correction, we use the constrained Hodge
Laplacian, adapting the general framework discussed by \citet[Theorem~3.1 and Sections~4.1--4.5]{Johnsen}
to symmetric tensor fields.
The resulting estimate retains the curvature term $a\|F\|_2^2$
with a coefficient independent of tensor rank.

Recall that, $d\mu=Z^{-1}e^{-V}\,dx$ and
\[
 V\in C^\infty(\mathbb R^n),\qquad
 aI\preceq \mathsf{K}:=D^2V\preceq MI,\qquad 0<a\le M<\infty.
\]
All Hilbert spaces are real. Symmetric tensor norms sum squares over
ordered indices, and exterior-form norms sum squares over increasing
indices. A symmetric rank-$r$ field $U$, $r\ge1$, is \emph{compatible} if
\[
 \partial_iU_{jI}=\partial_jU_{iI}
 \quad\text{in distributions for every }|I|=r-1.
\]
Let $\mathcal C_r$ be the closed subspace of compatible fields in
$L^2(\mu;\operatorname{Sym}^r\mathbb R^n)$, and let
$\mathcal G_r=\{U\in\mathcal C_r:\mathbb EU=0\}$.
Set $\mathcal C_0=L^2(\mu)$ and $\mathcal G_0=L^2_0(\mu)$.
The spaces are closed because $L^2(\mu)$ convergence implies local
distributional convergence.

Write $\partial_i^*=-\partial_i+V_i$ for the weighted adjoint of
$\partial_i$. On a smooth compactly supported tensor field define
\[
 (\operatorname{div}_V F)_I=\sum_j\partial_j^*F_{Ij}.
\]
For $F=\nabla^{r+1}\psi$ with $\psi\in C_c^\infty$, define
\begin{equation}\label{full-adjoint-projection}
 S_rF=P_{\mathcal G_r}\operatorname{div}_V F
     =P_{\mathcal C_r}\operatorname{div}_V F.
\end{equation}
The projections agree because integration by parts gives
$\mathbb E\operatorname{div}_V F=0$, while constant tensors belong to
$\mathcal C_r$. For $U\in\mathcal G_r\cap W^{1,2}$,
\[
 \langle U,S_rF\rangle=\langle\nabla U,F\rangle.
\]
We use $S_r$ only on the smooth potential fields
$\mathcal K_{r+1}=\{\nabla^{r+1}\psi:\psi\in C_c^\infty\}$.
Lemma~\ref{full-divergence-range} proves that $S_r(\mathcal K_{r+1})$
is dense in $\mathcal G_r$. This density passes the energy estimate below to bounded integration
operators.

\begin{lemma}[Compatible-tensor curvature estimate]\label{full-hodge}
For every $m\ge0$ and $F\in\mathcal K_{m+1}$,
\begin{equation}\label{full-hodge-bound}
 \|S_mF\|_2^2\ge\|\nabla F\|_2^2+a\|F\|_2^2.
\end{equation}
The coefficient of the curvature term does not depend on $m$.
\end{lemma}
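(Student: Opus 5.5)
The plan is to compare the projected divergence with the full divergence. Write $G=\operatorname{div}_VF$ for $F=\nabla^{m+1}\psi$, and split $G=S_mF+R$ with $R\perp\mathcal C_m$, so that
\[
\|S_mF\|_2^2=\|G\|_2^2-\|R\|_2^2 .
\]
The argument then has two parts: a Bochner-type identity for $\|G\|_2^2$, and an upper bound for the curl correction $\|R\|_2^2$. The curvature excess produced in the first part should pay exactly for the second.

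\emph{Step 1 (Bochner identity).} Integrate by parts once, using the commutator $[\partial_j,\partial_k^*]=V_{jk}$. Compatibility of $F$ gives $\partial_jF_{Ik}=\partial_kF_{Ij}$, which turns the derivative term into $\|\nabla F\|_2^2$. The result is
\[
\|G\|_2^2=\|\nabla F\|_2^2+\sum_{|I|=m}\mathbb E\,\langle F_{I\cdot},\mathsf K F_{I\cdot}\rangle .
\]
Curvature enters on a single slot only, and this is why the final coefficient of $a$ does not depend on $m$. Splitting $\mathsf K=aI+(\mathsf K-aI)$ yields
\[
\|G\|_2^2=\|\nabla F\|_2^2+a\|F\|_2^2+\mathcal E,\qquad
\mathcal E=\sum_I\mathbb E\,\langle F_{I\cdot},(\mathsf K-a)F_{I\cdot}\rangle\ge0 .
\]
It therefore suffices to prove $\|R\|_2^2\le\mathcal E$.

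\emph{Step 2 (algebraic curl).} Compute the curl of $G$:
\[
(\operatorname{curl}G)_{ijI'}=\partial_iG_{jI'}-\partial_jG_{iI'} .
\]
Commuting $\partial_i$ past $\partial_k^*$ and using compatibility of $F$, all derivatives of $F$ cancel, leaving
\[
(\operatorname{curl}G)_{ijI'}=V_{ik}F_{jI'k}-V_{jk}F_{iI'k}.
\]
The $aI$ part of $\mathsf K$ drops out by the symmetry of $F$. Hence $\operatorname{curl}G=\operatorname{curl}H$, where $H$ is the algebraic field $H_{jI'}=(\mathsf K-a)_{jk}F_{kI'}$, placed in one slot and then symmetrized in the appropriate sense. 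Since $G-R\in\mathcal C_m$, the correction $R$ is the minimal-norm field whose curl equals $\operatorname{curl}G$. This gives the variational (dual) formula
\[
\|R\|_2=\sup_\beta\frac{\langle\operatorname{curl}G,\beta\rangle}{\|d_V^*\beta\|_2},
\]
where $\beta$ ranges over smooth compactly supported $2$-form-valued fields with symmetric remaining indices and $d_V^*$ is the weighted codifferential. Moving the curl onto $\beta$ gives $\langle\operatorname{curl}G,\beta\rangle=\langle H,d_V^*\beta\rangle$.

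\emph{Step 3 (constrained Hodge estimate), the expected main obstacle.} The naive Cauchy--Schwarz bound only gives $\|R\|_2^2\le\|H\|_2^2$, which involves $(\mathsf K-a)^2$ rather than $\mathsf K-a$ and is too weak. Instead, apply Cauchy--Schwarz in the metric $\mathsf K-a$:
\[
\langle H,d_V^*\beta\rangle^2\le\mathcal E\cdot\langle d_V^*\beta,(\mathsf K-a)\,d_V^*\beta\rangle .
\]
This reduces the claim to the constrained Hodge inequality
\[
\langle\eta,(\mathsf K-a)\eta\rangle\le\|\eta\|_2^2\qquad\text{for }\eta=d_V^*\beta ,
\]
at least after restricting $\beta$ to the coexact part orthogonal to $\ker d_V^*$. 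I would try to prove this by a Weitzenböck formula for the weighted Hodge Laplacian $d_Vd_V^*+d_V^*d_V$ on these constrained tensor-valued forms, in the spirit of the framework of \citet{Johnsen}. The hope is that the curvature term of that formula, on $2$-forms, is $\mathsf K$ acting as a derivation on the two antisymmetric slots; this would dominate the needed $(\mathsf K-a)$-weighted norm of $d_V^*\beta$, using $a\le\mathsf K\le M$ and the fact that $d_V^*$ shifts the form degree by one.

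If that step closes, combining Steps 1--3 gives $\|S_mF\|_2^2\ge\|\nabla F\|_2^2+a\|F\|_2^2$. Step 3 is where I am least certain. If the direct pointwise comparison fails, the fallback is to minimize $\|d_V^*\beta\|_2^2-2\langle H,d_V^*\beta\rangle$ over $\beta$ and use the Euler--Lagrange equation of that minimization, together with the Hodge Laplacian's curvature lower bound, to trade $(\mathsf K-a)^2$ for $\mathsf K-a$.
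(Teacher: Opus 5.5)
\textbf{There is a genuine gap in Step~3.} Your Step~1 (the Bochner identity and the reduction to $\|R\|_2^2\le B_{\mathsf K}(F)-a\|F\|_2^2$) agrees with the paper. So do your formula for $\operatorname{curl}G$ and the plan of bounding $R$ by a dual formula plus a Weitzenb\"ock identity. Step~3, however, carries the whole content of the lemma, and it is set up incorrectly.

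\emph{The pairing is algebraic.} The curl $t:=\operatorname{curl}G$ involves no derivatives of $F$. So $\langle\operatorname{curl}G,\beta\rangle=\mathbb E\langle t,\beta\rangle$ is an algebraic pairing. There is no order-zero field $H$ whose differential curl equals $t$, and $\langle H,d_V^*\beta\rangle$ is not this pairing: the two scale differently under dilations of $x$. Consequently your target $\langle\eta,(\mathsf K-a)\eta\rangle\le\|\eta\|_2^2$ for $\eta=d_V^*\beta$ is not scale-invariant, and it fails once $\mathsf K$ has eigenvalues well above $1+a$.

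\emph{The denominator needs the symmetric projection.} The derivative of $\beta$ belongs only in the denominator. There it must be the adjoint of $\operatorname{curl}$ on \emph{symmetric} fields, namely the symmetrized codifferential $P_1d^*\beta$. With the unprojected $\|d_V^*\beta\|_2$, the supremum bounds $\|R\|_2$ only from below.

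\emph{Closed, not coexact.} $(\ker d^*)^\perp$ is the closure of $\operatorname{ran}d$, i.e.\ closed forms, not coexact ones. Restricting to it, or equivalently adding $\|d\beta\|_2^2$ to the denominator, is what makes a Weitzenb\"ock identity usable.

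What must actually be proved is a lower bound on the projected energy $\ell(\omega)=\|d\omega\|_2^2+\|P_1d^*\omega\|_2^2$ by a zeroth-order form in $\omega$, against which $t$ can be paired.

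That lower bound is the missing idea, and it is not the curvature term you hoped for.
\begin{itemize}
\item Besides $\mathsf K_{\rm ext}$ (the derivation on the two exterior slots), symmetrization costs $\|(I-P_1)d^*\omega\|_2^2=\frac1m\|c\omega\|_2^2$ on $\ker b$-valued $\omega$. Here $c=\sum_ia_i\partial_i^*$, with $a_i$ the contraction in a symmetric slot.
\item Keeping this loss uniform in $m$ needs the commutator $cc^*-c^*c=\sum_i\partial_i^*\partial_i-\mathsf K_{\rm sym}$ (with $\mathsf K_{\rm sym}$ the action of $\mathsf K$ on the symmetric slots) and the bound $\|c^*\omega\|_2^2\le m\|\nabla\omega\|_2^2$.
\item Together these give $\ell(\omega)\ge\mathbb E\langle\omega,Q\omega\rangle$ with $Q=P_2(\mathsf K_{\rm ext}-m^{-1}\mathsf K_{\rm sym})P_2$, where $P_2$ projects onto $\ker b$.
\item The uncompressed operator need not be positive. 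Positivity comes from the structure of $\ker b$: in an eigenbasis of $\mathsf K$ and a multiplicity block $|\alpha|=N=m+1$ with $r_i=\sqrt{\alpha_i}$, one has $\omega=r\wedge v$ with $v\perp r$, and $\langle\omega,Q\omega\rangle=\frac{N^2}{m}\langle v,\mathsf Kv\rangle$.
\end{itemize}

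Given this, your Cauchy--Schwarz in the $(\mathsf K-a)$ metric does close, pointwise and with the algebraic pairing. The block of $t$ is $-\frac f{\sqrt{Nm}}\,r\wedge\mathsf Kr$ with $f=F_\alpha$. Hence $\langle t,\omega\rangle=-f\sqrt{N/m}\,\langle(\mathsf K-a)r,v\rangle$, and
\[
 \langle t,\omega\rangle^2\le\Bigl(\frac SN-a\Bigr)f^2\cdot\frac{N^2}{m}\langle v,(\mathsf K-a)v\rangle
 \le\Bigl(\frac SN-a\Bigr)f^2\,\langle\omega,Q\omega\rangle,\qquad S=\langle r,\mathsf Kr\rangle.
\]
Here $(S/N-a)f^2$ is exactly the block's share of $B_{\mathsf K}(F)-a\|F\|_2^2$, so this is equivalent to the paper's completed square.

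Note that rank-uniformity does not come from ``curvature on one slot'' in Step~1. It is decided entirely by how the $\frac1m\|c\omega\|_2^2$ loss and the compression to $\ker b$ are handled.
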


\begin{proof}
Write $F=\nabla^{m+1}\psi$, $\psi\in C_c^\infty$, and put
$Y=\operatorname{div}_V F$.
The commutator $[\partial_i,\partial_j^*]=V_{ij}$ gives
\[
 \sum_{i,j}\langle\partial_i^*F_{Ii},\partial_j^*F_{Ij}\rangle_{L^2}
 =\sum_{i,j}\mathbb E[\partial_jF_{Ii}\,\partial_iF_{Ij}]
  +\sum_{i,j}\mathbb E[V_{ij}F_{Ii}F_{Ij}].
\]
Indeed, move $\partial_j^*$ to the first factor, commute
$\partial_j$ past $\partial_i^*$, and move $\partial_i^*$ back.
Since $\nabla F$ is symmetric, summation over $I$ yields
\begin{equation}\label{full-divergence-energy}
 \|P_{\mathcal C_m}Y\|_2^2
 =\|\nabla F\|_2^2+B_{\mathsf{K}}(F)-\|(I-P_{\mathcal C_m})Y\|_2^2,
 \qquad
 B_{\mathsf{K}}(F)=\sum_{I,i,j}\mathbb E[V_{ij}F_{Ii}F_{Ij}].
\end{equation}
For $m=0$ the projection is the identity and
$B_{\mathsf{K}}(F)\ge a\|F\|_2^2$, which proves the assertion. Assume $m\ge1$.
Differentiating $Y$ and using compatibility gives
\begin{equation}\label{full-divergence-curl}
 \partial_kY_{lI}-\partial_lY_{kI}
   =\sum_j(V_{kj}F_{lIj}-V_{lj}F_{kIj}),\qquad |I|=m-1.
\end{equation}
Every term differentiating $F$ cancels: both its second derivatives and
the first derivatives multiplied by $V_j$ are symmetric in $k,l$.
Only the displayed derivatives of $V_j$ remain.
For example, when $m=1$ and $D^2V$ is diagonal at the point,
the $kl$ component of this curl is $(V_{kk}-V_{ll})F_{kl}$.
Thus the correction is driven by differences between curvature
directions. The constrained complex below organizes all such
differences, so that their cost can be compared with the curvature
already present in \eqref{full-divergence-energy}.

\paragraph{A Complex Encoding Compatibility.}
On $\operatorname{Sym}^q\mathbb R^n$, use the orthonormal multiplicity
basis $e_\alpha$, $|\alpha|=q$, whose ordered components equal
$\sqrt{\alpha!/q!}$ on tuples of multiplicity $\alpha$ and vanish
otherwise. Define
\[
 a_i e_\alpha=\sqrt{\alpha_i}\,e_{\alpha-e_i},\qquad
 a_i^*e_\alpha=\sqrt{\alpha_i+1}\,e_{\alpha+e_i}.
\]
The first expression is zero if $\alpha_i=0$. These operators satisfy
$[a_i,a_j^*]=\delta_{ij}I$ and $\sum_i a_i^*a_i=qI$.
Contraction in one physical tensor slot is $a_i/\sqrt q$.
On exterior forms let $\varepsilon_i$ be wedge multiplication by $e_i$
and $\iota_i=\varepsilon_i^*$, so
$\varepsilon_i\iota_j+\iota_j\varepsilon_i=\delta_{ij}I$.
Set
\[
 b=\sum_i\varepsilon_i a_i,\qquad b^*=\sum_i a_i^*\iota_i.
\]
The commutation relations give
\begin{equation}\label{full-algebraic-complex}
 b^2=0,\qquad bb^*+b^*b=(q+p)I
 \quad\text{on }\operatorname{Sym}^q\mathbb R^n\otimes\Lambda^p\mathbb R^n.
\end{equation}
The second identity in \eqref{full-algebraic-complex} is the classical
polynomial de~Rham--Koszul homotopy identity; see, for example \citet[Theorem~3.1, Equation~(3.9)]{ArnoldFalkWinther}.
Under the identification $e_\alpha\leftrightarrow y^\alpha/\sqrt{\alpha!}$,
$b$ is exterior differentiation in the auxiliary variables $y$, and
$b^*$ is contraction with the radial vector field.
To verify the identity directly in our normalization, commute $a_i$ past
$a_j^*$ in $bb^*$ and use
$\iota_j\varepsilon_i=\delta_{ij}I-\varepsilon_i\iota_j$ in $b^*b$.
The mixed terms cancel, leaving
$\sum_i a_i^*a_i+\sum_i\varepsilon_i\iota_i=(q+p)I$.

Fix $q=m-1$ and put
\[
 \mathcal F_p=\ker b\subset\operatorname{Sym}^{m-1}\mathbb R^n\otimes\Lambda^p\mathbb R^n,
 \qquad P_p=\frac{bb^*}{m+p-1},\qquad p\ge1.
\]
Here $P_p$ is the orthogonal projection onto $\mathcal F_p$: $bb^*$ has range in
$\ker b$ and equals $(m+p-1)I$ there by
\eqref{full-algebraic-complex}. The map
\[
 I_m:\operatorname{Sym}^m\mathbb R^n\longrightarrow \mathcal F_1,
 \qquad I_mT=\frac{bT}{\sqrt m},
\]
is an isometry onto $\mathcal F_1$. Its isometry property follows from
$b^*b=mI$ on exterior degree zero. For $\omega\in \mathcal F_1$,
$bb^*\omega=m\omega$, proving surjectivity.

Exterior differentiation $d=\sum_i\varepsilon_i\partial_i$ satisfies
$bd+db=0$ and $d^2=0$. It therefore defines closed operators
$d_p:L^2(\mu;\mathcal F_p)\to L^2(\mu;\mathcal F_{p+1})$ on their maximal distributional
domains. Under $I_m$, $\ker d_1$ is precisely $\mathcal C_m$.
Lemma~\ref{full-complex-domains} identifies the adjoint as $P_pd^*$,
where $d^*=\sum_i\iota_i\partial_i^*$, and supplies a common smooth core
for the form
\[
 \ell(\omega)=\|d_2\omega\|_2^2+\|d_1^*\omega\|_2^2
             =\|d\omega\|_2^2+\|P_1d^*\omega\|_2^2.
\]
This is the energy of the constrained Hodge Laplacian. We use its
variational form, so no inverse differential operator is needed below.

\paragraph{The Energy of the Correction.}
Initially let $\omega\in C_c^\infty(\mathbb R^n;\mathcal F_2)$. Define
\[
 \mathsf{K}_{\rm ext}=\sum_{i,j}V_{ij}\varepsilon_i\iota_j,\qquad
 \mathsf{K}_{\rm sym}=\sum_{i,j}V_{ij}a_j^*a_i,\qquad
 c=\sum_i a_i\partial_i^*,\quad c^*=\sum_i a_i^*\partial_i.
\]
Expanding $dd^*+d^*d$ gives
\[
 dd^*+d^*d
 =\sum_{i,j}(\varepsilon_i\iota_j+\iota_j\varepsilon_i)
                    \partial_j^*\partial_i
   +\sum_{i,j}V_{ij}\varepsilon_i\iota_j
 =\sum_i\partial_i^*\partial_i+\mathsf{K}_{\rm ext}.
\]
Also $bd^*+d^*b=c$. Since $b\omega=0$ and
$I-P_1=b^*b/m$,
\[
 \|(I-P_1)d^*\omega\|_2^2=\frac1m\|c\omega\|_2^2.
\]
To check the norm identity, $b^*b/m$ is an orthogonal projection, so its
squared norm on a vector $z$ equals
$\langle z,b^*bz\rangle/m=\|bz\|^2/m$.
Consequently
\begin{equation}\label{full-hook-energy}
 \ell(\omega)=\|\nabla\omega\|_2^2
       +\mathbb E\langle\omega,\mathsf{K}_{\rm ext}\omega\rangle
       -\frac1m\|c\omega\|_2^2.
\end{equation}
A second commutator calculation gives
\[
 cc^*-c^*c
 =\sum_i\partial_i^*\partial_i
   +\sum_{i,j}a_j^*a_i(\partial_i^*\partial_j-\partial_j\partial_i^*)
 =\sum_i\partial_i^*\partial_i-\mathsf{K}_{\rm sym}.
\]
The row operator $(a_i^*)_i$ from a sum of symmetric degree-$m-1$
spaces to symmetric degree $m$ has squared norm $m$, because its product
with its adjoint is $\sum_i a_i^*a_i=mI$ on the target. Applied to the
components $\partial_i\omega$, this yields
$\|c^*\omega\|_2^2\le m\|\nabla\omega\|_2^2$. Thus
\[
 \|c\omega\|_2^2
 =\|c^*\omega\|_2^2-\|\nabla\omega\|_2^2
                      +\mathbb E\langle\omega,\mathsf{K}_{\rm sym}\omega\rangle
 \le(m-1)\|\nabla\omega\|_2^2
                      +\mathbb E\langle\omega,\mathsf{K}_{\rm sym}\omega\rangle.
\]
Substituting into \eqref{full-hook-energy} gives
\begin{equation}\label{full-hook-lower-bound}
 \ell(\omega)\ge\frac1m\|\nabla\omega\|_2^2
                    +\mathbb E\langle\omega,Q\omega\rangle,
 \qquad Q=P_2(\mathsf{K}_{\rm ext}-m^{-1}\mathsf{K}_{\rm sym})P_2.
\end{equation}

\paragraph{A Pointwise Completed Square.}
At a fixed point diagonalize $\mathsf{K}$, with eigenvalues $\kappa_i\ge a$,
and put $N=m+1$. Only pointwise algebra follows, so no derivatives of the chosen eigenbasis enter. Decompose by the total multiplicities $\alpha$ of the
symmetric and exterior indices, so $|\alpha|=N$.
In this block set $r_i=\sqrt{\alpha_i}$ and
$S=\sum_i\alpha_i\kappa_i$, restricting to indices with $\alpha_i>0$.
The map $b$ is wedge multiplication by $r$: inserting an exterior index
$i$ lowers its symmetric multiplicity and contributes $\sqrt{\alpha_i}$.
Therefore every element of $\mathcal F_2$ in this block has the unique form
\[
 \omega=r\wedge v,\qquad v\perp r,\qquad |\omega|^2=N|v|^2.
\]
Indeed, $r\wedge\omega=0$ and
$\iota_r(r\wedge\omega)+r\wedge\iota_r\omega=N\omega$ show that
$v=N^{-1}\iota_r\omega$ works; $\iota_r^2=0$ gives $v\perp r$.
If the block has only one active index, $\mathcal F_2$ is zero and the formulas
below have $v=0$.

On the exterior pair $ij$, the uncompressed coefficient of $Q$ is
$[N(\kappa_i+\kappa_j)-S]/m$. Expansion gives
\[
 \sum_{i<j}(\kappa_i+\kappa_j)(r_iv_j-r_jv_i)^2
 =S|v|^2+N\sum_i\kappa_iv_i^2
       -2(r\cdot v)\sum_i\kappa_ir_iv_i.
\]
The last term vanishes. Combining this equality with
$|r\wedge v|^2=N|v|^2$ proves
\begin{equation}\label{full-block-Q}
 \langle\omega,Q\omega\rangle=\frac{N^2}{m}\langle v,\mathsf{K}v\rangle
 \ge\frac{Na}{m}|\omega|^2.
\end{equation}
In particular the compressed operator $Q$ is positive. This is a
statement on $\ker b$; the uncompressed diagonal expression need not
be positive. It is also bounded, since $\mathsf{K}\preceq MI$.
The joint graph core in Lemma~\ref{full-complex-domains}, applied to
differences of approximants in \eqref{full-hook-lower-bound}, makes their
full gradients Cauchy. Closedness of the derivative and boundedness of
$Q$ therefore extend that bound to the full form domain of $\ell$.

Let $f=F_\alpha$ be the coefficient of $F$ in the normalized symmetric
rank-$N$ basis, and let $t$ be the corresponding block of $dI_mY$.
Equation~\eqref{full-divergence-curl} gives
\[
 t_{ij}=\frac{(\kappa_i-\kappa_j)\sqrt{\alpha_i\alpha_j}}
                  {\sqrt{Nm}}f,
 \qquad t=-\frac f{\sqrt{Nm}}r\wedge \mathsf{K}r.
\]
The denominator follows from two physical contractions, first
$a_j/\sqrt N$ on $F$ and then $a_i/\sqrt m$ in $I_m$.
The contribution of this block to the unintegrated form $B_{\mathsf{K}}$ is
$(S/N)f^2$: curvature acts in one tensor slot, and averaging that slot
over all $N$ slots gives the same quadratic form on a symmetric tensor.
Using $v\perp r$ and \eqref{full-block-Q}, we obtain the exact identity
\begin{align*}
 \frac SN f^2-2\langle t,\omega\rangle+\langle\omega,Q\omega\rangle=\left\langle
       \frac f{\sqrt N}r+\frac N{\sqrt m}v,
       \mathsf{K}\left(\frac f{\sqrt N}r+\frac N{\sqrt m}v\right)
       \right\rangle\ge a\left(f^2+\frac{N^2}{m}|v|^2\right)\ge af^2.
\end{align*}
Thus the correction can consume at most the curvature in excess of $a$.

Finally apply the variational projection identity of
Lemma~\ref{full-complex-domains} to $u=I_mY$:
\[
 \|(I-P_{\mathcal C_m})Y\|_2^2
 =\sup_{\omega\in\operatorname{Dom}(d_1^*)\cap\operatorname{Dom}(d_2)}
       \{2\langle dI_mY,\omega\rangle_{L^2}-\ell(\omega)\}.
\]
Drop the nonnegative gradient term in
\eqref{full-hook-lower-bound}, use the completed square in every block,
and integrate. Each expression inside this supremum is at most
$B_{\mathsf{K}}(F)-a\|F\|_2^2$. Equation~\eqref{full-divergence-energy} and
\eqref{full-adjoint-projection} prove \eqref{full-hodge-bound}.
\end{proof}

\section{Centered Integration and Polynomial Observations}
\label{full-integration-section}

We now connect the compatible-field estimate to the Appell coefficients.
The gradient has two types of input after inversion: constant tensors and
compatible fields with mean zero.  Integration of a constant tensor produces a
linear function; successive centered integrations produce the Appell
polynomials.  The curvature estimate compares integration on these particular
inputs with integration on the whole compatible space.

Throughout this section, $\mu$ is regular and isotropic, and
$D^2V\succeq aI$ for some $a>0$.
We use the spaces $E_r=\operatorname{Sym}^r(\mathbb R^n)$,
$\mathcal C_r$, and $\mathcal G_r$, and the closed gradient
\[
 D_r:\operatorname{Dom}(D_r)\subset\mathcal G_r
      \longrightarrow\mathcal C_{r+1}
\]
with $D_rU=\nabla U$ and
$\operatorname{Dom}(D_r)=\mathcal G_r\cap W^{1,2}(\mu;E_r)$.
Compatibility makes $\nabla U$ fully symmetric, and
Lemma~\ref{full-primitive} supplies closedness, density of the domain,
and a bounded inverse.
The norm of a tensor sums squares over all ordered indices.  Since constants
are compatible, subtraction of the mean gives the orthogonal decomposition
\begin{equation}
 \mathcal C_r=E_r\oplus\mathcal G_r.
 \label{full-compatible-split}
\end{equation}

\subsection{The Inverse Gradient}

Lemma~\ref{full-primitive} says that $D_r$ is bijective and its inverse
\[
 \widetilde J_r=D_r^{-1}:
       \mathcal C_{r+1}\longrightarrow\mathcal G_r
\]
is bounded.  More precisely,
\begin{equation}
 \nabla\widetilde J_rF=F,\qquad
 \mathbb E_\mu\widetilde J_rF=0,\qquad
 \|\widetilde J_rF\|_2^2\le C_P(\mu)\|F\|_2^2.
 \label{full-primitive-properties}
\end{equation}
The scalar bound $C_P(\mu)<\infty$ used here is supplied independently by
strong convexity; no dimension-free conclusion from the present argument is
being assumed.

Split this inverse according to
$\mathcal C_{r+1}=E_{r+1}\oplus\mathcal G_{r+1}$:
\begin{equation}
 J_r=\widetilde J_r|_{\mathcal G_{r+1}},\qquad
 L_r=\widetilde J_r|_{E_{r+1}}.
 \label{full-J-L-definition}
\end{equation}
If $T\in E_{r+1}$, write $T\mathbin{\lrcorner}x$ for contraction against
$x$ in one tensor slot.  Symmetry of $T$ makes the resulting rank-$r$ field
compatible, its mean is zero because $\mathbb EX=0$, and its gradient is
$T$.  Uniqueness of the centered primitive gives
\begin{equation}
 (L_rT)(x)=T\mathbin{\lrcorner}x.
 \label{full-linear-primitive}
\end{equation}
For an ordered index tuple $I$ of length $r$,
$(L_rT)_I=\sum_jT_{Ij}X_j$.  Consequently,
\begin{align}
 \|L_rT\|_2^2
 &=\sum_I\sum_{j,k}T_{Ij}T_{Ik}\,\mathbb E[X_jX_k]
 =\sum_I\sum_jT_{Ij}^2=\|T\|^2.
 \label{full-L-bound}
\end{align}
This calculation includes $r=0$, when $I$ is the empty tuple.  Together with
\eqref{full-primitive-properties}, it gives bounds independent of the rank:
\begin{equation}
 \|L_r\|\le1,\qquad \|J_r\|^2\le C_P(\mu).
 \label{full-rankwise-bounds}
\end{equation}

\subsection{Quadratic Duality and Curvature}

Let $H_r$ be the positive self-adjoint operator associated with the closed
form $U\mapsto\|\nabla U\|_2^2$ on $\mathcal G_r$, as supplied by the
representation theorems for closed forms
\cite[Chapter~VI, Theorems~2.1, 2.6, and~2.23]{Kato}.
The preliminary Poincar\'e bound gives
$H_r\succeq C_P(\mu)^{-1}I$.
We will use the following quadratic variational identity: if
$B\succeq bI$ is positive self-adjoint with $b>0$, then
\begin{equation}\label{full-inverse-variational}
 \langle v,B^{-1}v\rangle
 =\sup_{U\in\operatorname{Dom}(B^{1/2})}
       \{2\langle v,U\rangle-\|B^{1/2}U\|^2\}.
\end{equation}
Indeed, the difference between the left side and the expression in
braces is $\|B^{1/2}U-B^{-1/2}v\|^2$, which vanishes at $U=B^{-1}v$.

Apply \eqref{full-inverse-variational} with $B=H_r$. Every element of its
form domain has the unique representation $U=\widetilde J_rF$ with
$F\in\mathcal C_{r+1}$. Therefore
\begin{align*}
 \langle v,H_r^{-1}v\rangle
 =\sup_{F\in\mathcal C_{r+1}}
       \{2\langle\widetilde J_r^*v,F\rangle-\|F\|_2^2\}=\|\widetilde J_r^*v\|_2^2.
\end{align*}
The orthogonal decomposition of the inputs of $\widetilde J_r$ now gives
\begin{equation}\label{full-inverse-identities}
 H_r^{-1}=\widetilde J_r\widetilde J_r^*
          =J_rJ_r^*+L_rL_r^*.
\end{equation}

The curvature comparison requires a precise dense family of projected
divergences. General range and projection identities for closed
differential operators are recalled in
Johnsen~\cite[Theorem~3.1]{Johnsen}. The additional assertion needed here
is density after restricting the inputs to compactly supported potential
fields and projecting the weighted divergences onto compatible symmetric
tensors, in every tensor rank. Density of the potential fields alone
does not establish this assertion for the unbounded divergence operator.
We give a proof using standard orthogonality, local elliptic regularity,
and cutoff arguments.

\begin{lemma}[Dense Projected Divergences]\label{full-divergence-range}
For every $r\ge0$, the set
\[
 \{S_r\nabla^{r+1}\psi:\psi\in C_c^\infty(\mathbb R^n)\}
\]
is dense in $\mathcal G_r$.
\end{lemma}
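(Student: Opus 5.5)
We argue by orthogonality. Suppose $U\in\mathcal G_r$ is orthogonal to every $S_r\nabla^{r+1}\psi$ with $\psi\in C_c^\infty$; we show $U=0$. Since $U\in\mathcal C_r$ and $S_rF=P_{\mathcal C_r}\operatorname{div}_V F$, orthogonality to the projection equals orthogonality to the divergence itself:
\[
 0=\langle U,\operatorname{div}_V\nabla^{r+1}\psi\rangle_{L^2(\mu)}
 \qquad\text{for all }\psi\in C_c^\infty .
\]
Unwinding $\partial_j^*=-\partial_j+V_j$ against the weight $e^{-V}$, this says that the distribution $T=\sum_{I,j}(\partial_j^*)^{\,*}\!\ldots$ obtained by applying the formal adjoint of $\psi\mapsto\operatorname{div}_V\nabla^{r+1}\psi$ to $Ue^{-V}$ vanishes. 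Concretely, with $\mathcal L_V=-\Delta+\nabla V\cdot\nabla$, one has $\operatorname{div}_V\nabla^{r+1}\psi=\nabla^r\mathcal L_V\psi+(\text{lower-order terms in }\psi\text{ with coefficients from }D^2V,\dots)$ by commuting $\partial_j^*$ past $\partial$'s via $[\partial_i,\partial_j^*]=V_{ij}$. So the condition is a linear elliptic equation of order $r+2$ in distributional form, and it is nicer to rewrite it as $\langle \nabla^{*r}U,\mathcal L_V\psi\rangle+\langle U,\mathcal R\psi\rangle=0$.

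My plan would rather avoid heavy elliptic theory for higher order and use the first-order structure. \textbf{Step 1 (local regularity).} Since $U$ is compatible, locally $U=\nabla^r u$ for some scalar $u\in W^{r,2}_{\rm loc}$ (Poincaré lemma for symmetric tensors, iterating the primitive construction of Lemma~\ref{full-primitive} on balls). The orthogonality then says that $w:=\mathcal L_V$-type expression in $u$ satisfies $(\nabla^{*})^{r+1}\!\circ\nabla^{r+1}$-weak equation; equivalently, $G:=\nabla U$ satisfies $\langle G,\nabla^{r+1}\psi\rangle$-weak equation $\operatorname{div}_V$-type identity $\langle \operatorname{div}_V^{\,\rm formal}$... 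The cleaner route: the identity says $\langle \nabla U,\nabla^{r+1}\psi\rangle_\mu=0$ formally, i.e. $U$ is weakly ``$H_r$-harmonic'' tested on potential fields. Elliptic regularity for the weighted Laplacian ($V$ smooth) gives $U\in C^\infty$.  \textbf{Step 2 (cutoff).} With $U$ smooth, take $\chi_R(x)=\chi(x/R)$ and test with $\psi$ chosen so that $\nabla^{r+1}\psi=\nabla(\chi_R U)$ up to a controllable error; this is possible because $\chi_RU$ is compatible up to terms $\nabla\chi_R\otimes U$, and locally $U=\nabla^ru$ so take $\psi=\chi_R u$ (after normalizing $u$ on the ball). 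Then $\|\nabla U\|^2_{L^2(\chi_R^2\mu)}$ is bounded by error terms involving $\nabla\chi_R$, $R^{-1}$, and lower derivatives of $u$, which are controlled by $\|U\|_2$ via a Poincaré/Brascamp–Lieb inequality on $\mu$ (strong convexity $D^2V\succeq aI$ and polynomial growth of $V$). Letting $R\to\infty$ gives $\nabla U=0$, so $U$ is constant; since $\mathbb EU=0$, $U=0$.

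The main obstacle is Step 2: the lower-order terms from $\psi=\chi_Ru$ involve $u,\nabla u,\dots,\nabla^{r-1}u$ times derivatives of $\chi_R$, and one must control these in $L^2(\mu)$ by $\|U\|_2$ uniformly in $R$. I would handle this by choosing the primitive $u$ as the iterated centered primitive $\widetilde J_0\cdots\widetilde J_{r-1}U$ from \eqref{full-primitive-properties}, whose lower derivatives are bounded in $L^2(\mu)$ by powers of $C_P(\mu)^{1/2}\le a^{-1/2}$, and then $|\nabla^k\chi_R|\lesssim R^{-k}$ makes all error terms vanish as $R\to\infty$. A secondary issue is that $\psi=\chi_Ru$ is only as smooth as $u$; Step 1's regularity (or mollification, using that $V$ is smooth so the equation is stable) supplies $\psi\in C_c^\infty$.
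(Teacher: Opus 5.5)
Your overall route is the paper's. Orthogonality to $S_r\nabla^{r+1}\psi$ is orthogonality to $\operatorname{div}_V\nabla^{r+1}\psi$. The iterated centered primitive $u=\widetilde J_0\cdots\widetilde J_{r-1}U$, which the paper also takes from Lemma~\ref{full-primitive}, satisfies $\int\langle\nabla^q u,\nabla^q\psi\rangle\,d\mu=0$ with $q=r+1$. A cutoff argument, using $L^2(\mu)$ control of $u,\ldots,\nabla^{q-1}u$, then gives $\nabla U=0$.

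The gap is in Step~2. With $\psi=\chi_Ru$ the identity reads
\[
 \int\chi_R|\nabla U|^2\,d\mu
 =-\sum_{j=1}^{q}\binom qj\int\bigl\langle\nabla U,\nabla^j\chi_R\otimes\nabla^{q-j}u\bigr\rangle\,d\mu .
\]
So the error terms are not expressions in derivatives of $\chi_R$ and lower derivatives of $u$ alone: each one is paired with $\nabla U$ on the annulus $R\le|x|\le2R$. You only know $U\in L^2(\mu)$, so $\nabla U$ is merely locally square-integrable. Hence $\|\nabla U\|_{L^2(\{R\le|x|\le2R\})}$ may grow faster than $b_R=C_q\sum_{j}R^{-j}\|\nabla^{q-j}u\|_2$ decays.

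Nor does your choice of test function let these terms be absorbed into the left side. That side's weight $\chi_R$ vanishes at the outer edge of the annulus, while the coefficients $\nabla^j\chi_R$ need not be dominated by $\chi_R^{1/2}$. The lower-order control you single out as the main obstacle is the easy part; this absorption is the real one.

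The paper resolves it by testing with $\psi=\chi_R^{2q}u$. A term with $j\le q$ derivatives on the product of $2q$ cutoffs keeps at least $q$ undifferentiated copies, so
\[
 \bigl|\nabla^q(\chi_R^{2q}u)-\chi_R^{2q}\nabla^qu\bigr|
 \le C_q\chi_R^{q}\sum_{j=1}^qR^{-j}|\nabla^{q-j}u| .
\]
Cauchy--Schwarz then gives $A_R^2\le A_Rb_R$ with $A_R=\|\chi_R^q\nabla U\|_2$, which is finite by local regularity. Hence $A_R\le b_R\to0$, and Fatou's lemma gives $\|\nabla U\|_2=0$ with no a priori global integrability of $\nabla U$.

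A smaller point concerns Step~1. You test only against potential fields, so the equation is not a weighted Laplace equation for $U$. It is the scalar equation $\sum_I\partial_I(e^{-V}\partial_Iu)=0$ of order $2q$ for the primitive, with principal part $e^{-V}\Delta^q$. The smoothness you assert does hold, since elliptic operators with smooth coefficients are hypoelliptic, but for this operator rather than the weighted Laplacian. In any case only $u\in H^q_{\rm loc}$ is needed, and ellipticity yields it in one step from $u\in H^{q-1}_{\rm loc}$. This makes $A_R$ finite, and local Sobolev approximation then makes $\chi_R^{2q}u$ an admissible test function.
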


\begin{proof}
Suppose $u\in\mathcal G_r$ is orthogonal to every field in the displayed
set. If $r\ge1$, Lemma~\ref{full-primitive}, iterated $r$ times, supplies
$u=\nabla^r\phi$ with $\phi\in W^{r,2}(\mu)$; for $r=0$ put $\phi=u$.
Set $q=r+1$. The orthogonality and distributional integration by parts
state that
\begin{equation}\label{eq:dense-divergence-weak-equation}
 \int\langle\nabla^q\phi,\nabla^q\psi\rangle\,d\mu=0
 \quad\text{for every }\psi\in C_c^\infty.
\end{equation}
The left side is initially a distributional pairing.

Write $w=Z^{-1}e^{-V}$. Equation
\eqref{eq:dense-divergence-weak-equation} means
$\sum_I\partial_I(w\partial_I\phi)=0$, where $I$ runs over ordered
$q$-tuples. Its leading term is $w\Delta^q\phi$; every other term
contains a derivative of $\phi$ of order at most $2q-1$.
Since $\phi\in H^{q-1}_{\rm loc}$, these remaining terms belong locally
to $H^{-q}$. The function $w$ is smooth and strictly positive, so
$\Delta^q\phi\in H^{-q}_{\rm loc}$.
For $\zeta\in C_c^\infty$, the operator
$[\Delta^q,\zeta]$ has order at most $2q-1$. Consequently
$\Delta^q(\zeta\phi)\in H^{-q}(\mathbb R^n)$, or, in Fourier variables,
\[
 \int\frac{|\xi|^{4q}}{(1+|\xi|^2)^q}
                   |\widehat{\zeta\phi}(\xi)|^2\,d\xi<\infty.
\]
For $|\xi|\ge1$ the weight is bounded below by a positive constant
times $(1+|\xi|^2)^q$. For $|\xi|\le1$, integrability follows from
$\zeta\phi\in H^{q-1}$. Thus $\phi\in H^q_{\rm loc}$.

Choose $\eta\in C_c^\infty$ with $0\le\eta\le1$ and $\eta=1$ on
$B_1$, and put $\eta_R(x)=\eta(x/R)$. Local Sobolev approximation
permits the test $\psi=\eta_R^{2q}\phi$ in
\eqref{eq:dense-divergence-weak-equation}. The product rule gives
\[
 \nabla^q(\eta_R^{2q}\phi)
     =\eta_R^{2q}\nabla^q\phi+R_R,
 \qquad
 |R_R|\le C_q\eta_R^q
                   \sum_{j=1}^qR^{-j}|\nabla^{q-j}\phi|.
\]
Indeed, a term with $j\le q$ derivatives on the product of $2q$
copies of $\eta_R$ leaves at least $2q-j\ge q$ undifferentiated
copies, while the differentiated factors contribute $C_qR^{-j}$.
Define
\[
 A_R=\|\eta_R^q\nabla^q\phi\|_2,
 \qquad
 b_R=C_q\sum_{j=1}^qR^{-j}\|\nabla^{q-j}\phi\|_2.
\]
Since $\phi\in W^{q-1,2}(\mu)$, we have $b_R\to0$ and
$\|R_R/\eta_R^q\|_2\le b_R$, taking the quotient to be zero where
$\eta_R=0$. Equation
\eqref{eq:dense-divergence-weak-equation} therefore gives
\[
 A_R^2
 =-\left\langle\eta_R^q\nabla^q\phi,
                    R_R/\eta_R^q\right\rangle
 \le A_Rb_R.
\]
It follows that $A_R\le b_R$. Since $\eta_R\to1$ pointwise,
Fatou's lemma gives $\|\nabla^q\phi\|_2=0$. Hence $\nabla u=0$;
$u$ is a constant tensor, and $\mathbb Eu=0$ makes it zero.
The orthogonal complement of the proposed range is therefore zero,
which proves its density.
\end{proof}

For the curvature comparison, fix $v\in\mathcal G_{r+1}$.
By Lemma~\ref{full-divergence-range},
$S_r(\mathcal K_{r+1})$ is dense in $\mathcal G_r$.
Completing a square in that Hilbert space, and then using
$\nabla J_rv=v$, gives
\begin{align*}
 \|J_rv\|_2^2
 &=\sup_{F\in\mathcal K_{r+1}}
       \{2\langle J_rv,S_rF\rangle-\|S_rF\|_2^2\}\\
 &=\sup_{F\in\mathcal K_{r+1}}
       \{2\langle v,F\rangle-\|S_rF\|_2^2\}\\
 &\le\sup_{F\in\mathcal C_{r+1}\cap W^{1,2}}
       \{2\langle v,F\rangle-\|\nabla F\|_2^2-a\|F\|_2^2\}.
\end{align*}
The last inequality is exactly the smooth-field curvature estimate
\eqref{full-hodge-bound}. Write $F=\mathbb EF+F_0$ with
$F_0\in\mathcal G_{r+1}$. Since $\mathbb Ev=0$, the constant part
contributes only $-a\|\mathbb EF\|^2$, whose maximum is zero. Applying
\eqref{full-inverse-variational} to the remaining centered field yields
\[
 \|J_rv\|_2^2\le\langle v,(H_{r+1}+aI)^{-1}v\rangle.
\]
For $g_a(t)=t/(1+at)$, the spectral calculus
\cite[Chapter~VI, Sections~5.1--5.3]{Kato} gives
$(H_{r+1}+aI)^{-1}=g_a(H_{r+1}^{-1})$.
Together with \eqref{full-inverse-identities}, this proves
\begin{equation}\label{full-integration-comparison}
 J_r^*J_r
 \preceq g_a\bigl(J_{r+1}J_{r+1}^*+L_{r+1}L_{r+1}^*\bigr).
\end{equation}
The comparison uses the curvature estimate only on smooth potential
fields. Quadratic duality supplies the passage to arbitrary compatible
inputs.

\subsection{Appell Polynomials as Repeated Primitives}

The Appell tensors and the constants $c_d(\mu)$ were defined in
Section~\ref{sec:static-coefficients}. The centering and derivative
recursion for the Appell tensors are classical; see~\citet[Section~2.3, equations~(9)--(10)]{AnshelevichAppell}.
Taking expectation in
\eqref{full-Appell-definition} gives $\mathbb EA_d^\mu=0$ for $d\ge1$,
and centering gives $A_1^\mu(x)=x$.
Differentiating \eqref{full-Appell-definition} in the spatial variable and
comparing coefficients gives, for every $v\in\mathbb R^n$,
\begin{equation}
 \partial_v P_d^\mu[T]
 =dP_{d-1}^\mu[T(v,\cdot,\ldots,\cdot)].
 \label{full-Appell-derivative}
\end{equation}
All these polynomials and their derivatives belong to $L^2(\mu)$ because
log-concave laws have moments of every order.

For $r\ge0$, $d\ge1$, and $T\in E_{r+d}$, contraction against $A_d^\mu$
leaves $r$ free tensor indices.  We claim
\begin{equation}
 J_rJ_{r+1}\cdots J_{r+d-2}L_{r+d-1}T
 =\frac1{d!}\,T\mathbin{\lrcorner}A_d^\mu.
 \label{full-Appell-primitives}
\end{equation}
When $d=1$ the product of $J$'s is empty, and the assertion is
\eqref{full-linear-primitive}.  For the induction step, fix an ordered
free-index tuple $I$ of length $r$.  Equation
\eqref{full-Appell-derivative} gives
\[
 \partial_\ell
 \left(\frac1{d!}\sum_{j_1,\ldots,j_d}
         T_{I j_1\cdots j_d}(A_d^\mu)_{j_1\cdots j_d}\right)
 =\frac1{(d-1)!}\sum_{j_1,\ldots,j_{d-1}}
         T_{I\ell j_1\cdots j_{d-1}}
         (A_{d-1}^\mu)_{j_1\cdots j_{d-1}}.
\]
The derivative is symmetric in the indices in $I$ and in $\ell$, so the
field on the left is compatible.  Its mean is zero.  If $d\ge2$, its
derivative also has mean zero, because $\mathbb EA_{d-1}^\mu=0$.
The induction hypothesis identifies that derivative with
$J_{r+1}\cdots J_{r+d-2}L_{r+d-1}T$.  Uniqueness of its centered primitive
then proves \eqref{full-Appell-primitives}.

For each ordered tuple $I$, let $T_I\in E_d$ be the tensor obtained by
fixing the corresponding $r$ indices of $T$.  The definition of $c_d$
implies
\[
 \left\|\frac1{d!}\langle T_I,A_d^\mu\rangle\right\|_2^2
 \le c_d(\mu)^2\|T_I\|^2.
\]
Sum over $I$.  With our ordered-index norm,
$\sum_I\|T_I\|^2=\|T\|^2$, so
\begin{equation}
 \|J_r\cdots J_{r+d-2}L_{r+d-1}\|
 \le c_d(\mu).
 \label{full-Appell-block-bound}
\end{equation}
There is no factor depending on the number of free indices.

\subsection{The Graded Integration Operator}

Introduce the Hilbert direct sums
\[
 \mathcal H=\bigoplus_{r\ge0}\mathcal G_r,
 \qquad
 \mathcal E=\bigoplus_{r\ge0}E_{r+1},
\]
and define
\begin{equation}
 (JF)_r=J_rF_{r+1},\qquad (LU)_r=L_rU_r.
 \label{full-direct-sum-definition}
\end{equation}
Equation \eqref{full-rankwise-bounds} shows that both operators are bounded:
\[
 \|J\|^2=\sup_{r\ge0}\|J_r\|^2\le C_P(\mu),\qquad
 \|L\|=\sup_{r\ge0}\|L_r\|\le1.
\]
The operators $JJ^*$ and $LL^*$ are block diagonal, with respective blocks
$J_rJ_r^*$ and $L_rL_r^*$ on $\mathcal G_r$.  On $\mathcal G_r$ for $r\ge1$,
the block of $J^*J$ is $J_{r-1}^*J_{r-1}$.  Its block on
$\mathcal G_0$ is zero.  Applying
\eqref{full-integration-comparison} to each block gives
\begin{equation}
 J^*J\preceq g_a(JJ^*+LL^*).
 \label{full-direct-sum-comparison}
\end{equation}
The functional calculus respects this orthogonal decomposition, because
$JJ^*+LL^*$ is bounded and each summand is invariant under it.

The nonzero blocks of $J^{d-1}L$ are precisely the operators in
\eqref{full-Appell-block-bound}.  Their source spaces and target spaces are
pairwise orthogonal.  Hence their supremum norm is the norm of the full
operator, and
\begin{equation}
 \|J^{d-1}L\|=c_d(\mu),\qquad d\ge1.
 \label{full-direct-sum-observations}
\end{equation}
For the upper bound, use \eqref{full-Appell-block-bound}; for the reverse
bound, take its block with $r=0$, which sends $T\in E_d$ to
$P_d^\mu[T]/d!$ and therefore has norm exactly $c_d(\mu)$.
Thus the Appell coefficients are exactly the norms obtained by starting with
a constant tensor and integrating repeatedly with mean zero.

Finally, on $\mathcal G_0=L^2_0(\mu)$ the form of $H_0$ is the scalar
Dirichlet form.  Its inverse satisfies
\begin{equation}
 C_P(\mu)=\|H_0^{-1}\|
 \le\|J_0\|^2+\|L_0\|^2
 \le1+\|J\|^2.
 \label{full-Poincare-from-integration}
\end{equation}
For the equality, the map $H_0^{1/2}$ is a bijection from its form domain
onto $L^2_0(\mu)$.  Setting $v=H_0^{1/2}f$ in the defining Rayleigh quotient
for $C_P(\mu)$ gives
\[
 C_P(\mu)
 =\sup_{v\ne0}\frac{\|H_0^{-1/2}v\|_2^2}{\|v\|_2^2}
 =\|H_0^{-1}\|.
\]
The first inequality in \eqref{full-Poincare-from-integration} follows
from \eqref{full-inverse-identities}; the second follows from
\eqref{full-L-bound} and the fact that $J_0$ is a block of $J$.
Equations \eqref{full-direct-sum-comparison},
\eqref{full-direct-sum-observations}, and
\eqref{full-Poincare-from-integration} give the precise connection to
Proposition~\ref{prop:prefix-operator} below.

\section{The Norm of Centered Integration}\label{sec:operator-estimate}

The comparison from the preceding section, together with the
bounds on repeated integration of constant tensors, controls
the spectral radius of the integration operator even when
the positive curvature parameter is arbitrarily small. The first lemma bounds the spectral radius and derives the recurrence
for adjoint iterates. A scalar energy inequality then converts these facts
into a norm bound.

\subsection{Spectral Radius and Adjoint Iterates}

\begin{lemma}\label{lem:radius-iteration}
Let $R:\mathcal H\to\mathcal H$ and $L:\mathcal E\to\mathcal H$
be bounded operators. Suppose $a,K,C>0$, $m_0\ge1$, and
\begin{equation}\label{op-hypothesis}
 R^*R\preceq g_a(RR^*+LL^*),\qquad
 g_a(t)=\frac{t}{1+at},\qquad
 \|R^mL\|^2\le CK^{m-1}\quad(m\ge m_0).
\end{equation}
Then $r(R)^2\le K$. Moreover, for every unit vector $f$, putting
$u_m=\|(R^*)^mf\|^2$, one has
\[
 u_{m+1}\ge\frac{u_m^2}{u_{m-1}}-\|R^mL\|^2
 \quad\text{for }m\ge1\text{ whenever }u_m>0.
\]
\end{lemma}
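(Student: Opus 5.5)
The recurrence uses only $g_a(t)\le t$; the spectral-radius bound uses the strict concavity of $g_a$ together with approximate eigenvectors of $R^*$.

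\emph{The recurrence for adjoint iterates.} Put $v_m=(R^*)^mf$, so $u_m=\|v_m\|^2$. Since $g_a(t)\le t$ for $t\ge0$, the hypothesis \eqref{op-hypothesis} implies $R^*R\preceq RR^*+LL^*$. Evaluating this order on $v_m$ gives
\[
 \|Rv_m\|^2\le\|R^*v_m\|^2+\|L^*v_m\|^2 .
\]
The first term on the right is $u_{m+1}$. For the second, $\|L^*(R^*)^mf\|\le\|(R^mL)^*\|=\|R^mL\|$ because $\|f\|=1$. For the left side, first note that $u_{m-1}>0$: if $v_{m-1}=0$ then $v_m=0$, contradicting $u_m>0$. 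Cauchy--Schwarz then gives
\[
 \|Rv_m\|\,\|v_{m-1}\|\ge\langle Rv_m,v_{m-1}\rangle=\langle v_m,R^*v_{m-1}\rangle=u_m .
\]
Hence $\|Rv_m\|^2\ge u_m^2/u_{m-1}$. Combining the three estimates yields the stated inequality, for every $m\ge1$.

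\emph{The spectral radius.} Write $\rho=r(R)=r(R^*)$ and assume, for contradiction, that $\rho^2>K$; in particular $\rho>0$.
\begin{enumerate}
\item The spectrum of $R^*$ contains a point $\bar\lambda$ with $|\lambda|=\rho$, and this point lies on the boundary of the spectrum. Boundary points of the spectrum belong to the approximate point spectrum, so there are unit vectors $w_k$ with $\|(R^*-\bar\lambda)w_k\|\to0$.
\item For each fixed $m$ this gives $\|(R^*)^mw_k-\bar\lambda^mw_k\|\to0$ as $k\to\infty$. Therefore
\[
 \limsup_k\|L^*w_k\|
 \le\rho^{-m}\|R^mL\|
 \le\rho^{-m}\sqrt{C K^{m-1}}
 \qquad(m\ge m_0).
\]
The right side tends to $0$ as $m\to\infty$, since $K/\rho^2<1$. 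Because the left side does not depend on $m$, it equals $0$, i.e.\ $\|L^*w_k\|\to0$.
\item On the left of the comparison, $\|Rw_k\|\ge|\langle Rw_k,w_k\rangle|=|\langle w_k,R^*w_k\rangle|\to\rho$. On the right, set $X=RR^*+LL^*$. Then
\[
 \langle w_k,Xw_k\rangle=\|R^*w_k\|^2+\|L^*w_k\|^2\to\rho^2 .
\]
\item The function $g_a$ is concave on $[0,\infty)$. Apply Jensen's inequality to the spectral measure of the bounded self-adjoint operator $X$ in the state $w_k$:
\[
 \langle w_k,g_a(X)w_k\rangle\le g_a(\langle w_k,Xw_k\rangle).
\]
The hypothesis $\|Rw_k\|^2\le\langle w_k,g_a(X)w_k\rangle$ and continuity of $g_a$ then give $\rho^2\le g_a(\rho^2)=\rho^2/(1+a\rho^2)$. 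For $a>0$ and $\rho>0$ this is impossible.
\end{enumerate}
Hence $\rho^2\le K$.

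\emph{Main obstacle.} The delicate point is step~2: showing that the external term $\|L^*w_k\|$ vanishes along approximate eigenvectors. The limits must be taken in the order "fix $m$, let $k\to\infty$, then let $m\to\infty$", so that the approximation errors, which grow with $m$, do not interfere. Once that term is gone, the strict concavity of $g_a$ with $a>0$ closes the argument through a one-line Jensen inequality. Only the geometric decay $K/\rho^2<1$ of the coefficient bounds is used; the constant $C$ plays no role.
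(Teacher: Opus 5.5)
Your proposal is correct and follows the paper's proof essentially step for step. The recurrence uses $g_a(t)\le t$ and Cauchy--Schwarz against $(R^*)^{m-1}f$. The bound $r(R)^2\le K$ uses approximate eigenvectors of $R^*$ at a spectral point of modulus $r(R)$, Jensen's inequality for the concave $g_a$, and the geometric decay of $\|R^mL\|^2$.

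The differences are cosmetic. You run the contradiction in the opposite order, showing $\|L^*w_k\|\to0$ first, whereas the paper first extracts $p\ge as^2/(1-as)>0$. You cite the approximate point spectrum rather than the paper's explicit resolvent construction. You should also say explicitly that you complexify the real Hilbert spaces before speaking of $\sigma(R^*)$, as the paper does.
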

\begin{proof}

Assume $\mathcal H\ne\{0\}$; the zero space is trivial.
We first prove that the spectral radius $r(R)$ satisfies
\begin{equation}\label{op-radius}
 r(R)^2\le K.
\end{equation}
Complexification preserves the norms and the inequality in
\eqref{op-hypothesis}, so we may work over complex Hilbert spaces. If
$r(R)=0$, there is nothing to prove at this step. Otherwise choose
$\lambda\in\sigma(R^*)$ with $|\lambda|=r(R)>0$, and set
$s=|\lambda|^2$. Such a point exists because the spectrum of a bounded
operator on a nonzero complex Hilbert space is nonempty and compact
\cite[Chapter~III, Section~6.2]{Kato}, and $\sigma(R^*)$ is the
complex conjugate of $\sigma(R)$.

There are unit vectors $v_h$ such that
\begin{equation}\label{op-approximate-eigenvector}
 \|(R^*-\lambda I)v_h\|\longrightarrow0.
\end{equation}
Here is the resolvent argument. Put $z_h=(1+h^{-1})\lambda$.
Since $|z_h|>r(R)$, $R^*-z_hI$ is invertible. The factorization
\[
 R^*-\lambda I=(R^*-z_hI)
 \bigl[I+(z_h-\lambda)(R^*-z_hI)^{-1}\bigr]
\]
shows that
$|z_h-\lambda|\|(R^*-z_hI)^{-1}\|\ge1$; otherwise the second
factor would be invertible by its convergent Neumann series, contrary
to $\lambda\in\sigma(R^*)$. Choose a unit vector $w_h$ for which
\[
 \|(R^*-z_hI)^{-1}w_h\|
 \ge\tfrac12\|(R^*-z_hI)^{-1}\|,
\]
and normalize this resolvent image to obtain $v_h$. It follows that
\[
 \|(R^*-\lambda I)v_h\|
 \le\frac{2}{\|(R^*-z_hI)^{-1}\|}+|z_h-\lambda|
 \longrightarrow0,
\]
which proves \eqref{op-approximate-eigenvector}.

After passing to a subsequence, write
$\|L^*v_h\|^2\to p\ge0$. The approximate eigenvector relation gives
\[
 \|R^*v_h\|^2\to s,
 \qquad
 \liminf_h\|Rv_h\|^2\ge s.
\]
Indeed, the second assertion follows from
$\|Rv_h\|\ge|\langle v_h,Rv_h\rangle|
=|\langle R^*v_h,v_h\rangle|$.
For any bounded positive operator $T$ and unit vector $v$, its spectral
measure at $v$ is a probability measure. The scalar function $g_a$ is
concave, so Jensen's inequality gives
\[
 \langle v,g_a(T)v\rangle\le g_a(\langle v,Tv\rangle).
\]
Apply this with $T=RR^*+LL^*$ and $v=v_h$. By
\eqref{op-hypothesis},
\[
 \|Rv_h\|^2
 \le g_a\bigl(\|R^*v_h\|^2+\|L^*v_h\|^2\bigr).
\]
Taking limits yields $s\le g_a(s+p)<a^{-1}$. Rearranging gives
\begin{equation}\label{op-positive-observation}
 p\ge\frac{as^2}{1-as}>0.
\end{equation}

For each fixed integer $m\ge m_0$,
\[
 (R^*)^m-\lambda^m I
 =\sum_{\ell=0}^{m-1}
 \lambda^{m-1-\ell}(R^*)^\ell(R^*-\lambda I).
\]
Equation \eqref{op-approximate-eigenvector} therefore implies
$L^*(R^*)^mv_h-\lambda^mL^*v_h\to0$. Taking norms and using the
observation bound of degree $m+1$ gives
\[
 s^mp\le\|L^*(R^*)^m\|^2
       =\|R^mL\|^2\le C K^{m-1}.
\]
If $s>K$, this inequality would imply
$p\le(C/K)(K/s)^m\to0$, contradicting
\eqref{op-positive-observation}. Thus $s\le K$, proving
\eqref{op-radius}.

Since $g_a(t)\le t$, the comparison gives
$R^*R\preceq RR^*+LL^*$. If $u_m>0$, then $u_{m-1}>0$ and
Cauchy--Schwarz gives
\begin{align*}
 u_m^2
 &=|\langle (R^*)^{m-1}f,R(R^*)^mf\rangle|^2\\
 &\le u_{m-1}\|R(R^*)^mf\|^2\\
 &\le u_{m-1}\bigl(u_{m+1}+\|L^*(R^*)^mf\|^2\bigr)\\
 &\le u_{m-1}\bigl(u_{m+1}+\|R^mL\|^2\bigr).
\end{align*}
Division by $u_{m-1}$ proves the recurrence.
\end{proof}

To pass from the spectral radius to bounds on powers, we use the
spectral-radius formula \cite[Chapter~III, Section~6.2,
Equation~(6.13)]{Kato}:
\begin{equation}\label{eq:spectral-radius-formula}
 \lim_{m\to\infty}\|(R^*)^m\|^{1/m}=r(R^*)=r(R).
\end{equation}
This will contradict an adjoint orbit whose squared norms grow
at an exponential rate strictly larger than $K$.

\subsection{A Scalar Growth Inequality}

The recurrence in Lemma~\ref{lem:radius-iteration} has a useful
normalization: its error becomes a constant. If the first ratio is
sufficiently larger than one, a single energy inequality prevents all
later ratios from approaching one.

\begin{lemma}\label{lem:scalar-growth}
Let $\varepsilon>0$ and $q>0$ satisfy $q^2>2\varepsilon$.
Suppose $z_0=1$, $z_1>1+q$, and
\begin{equation}\label{eq:scalar-normalized-recurrence}
 z_{j+1}\ge\frac{z_j^2}{z_{j-1}}-\varepsilon
 \qquad(j\ge1)
\end{equation}
whenever $z_{j-1}$ and $z_j$ have been proved positive. Then every
term is positive, and
\[
 z_j\ge\bigl(1+\sqrt{q^2-2\varepsilon}\bigr)^j
 \qquad(j\ge0).
\]
\end{lemma}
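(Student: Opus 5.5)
The plan is to pass to the successive ratios and find a monotone ``energy'' quantity for them. Put $\rho_j=z_j/z_{j-1}$ and $\delta_j=\rho_j-1$ whenever $z_{j-1},z_j>0$. Dividing \eqref{eq:scalar-normalized-recurrence} by $z_j>0$ gives the ratio recurrence
\[
 \delta_{j+1}\ge\delta_j-\frac{\varepsilon}{z_j}.
\]
The key observation is an exact telescoping identity. Since $z_j-z_{j-1}=\delta_jz_{j-1}$, we have
\[
 \frac{\delta_j}{z_j}=\frac{z_j-z_{j-1}}{z_jz_{j-1}}=\frac1{z_{j-1}}-\frac1{z_j}.
\]

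The main step is to show that
\[
 \Psi_j=\delta_j^2-\frac{2\varepsilon}{z_{j-1}}
\]
is nondecreasing along the sequence. Suppose $\delta_j\ge\varepsilon/z_j$, so that the right side of the ratio recurrence is nonnegative. Squaring then gives
\[
 \delta_{j+1}^2\ge\delta_j^2-\frac{2\varepsilon\delta_j}{z_j}
 =\delta_j^2-\frac{2\varepsilon}{z_{j-1}}+\frac{2\varepsilon}{z_j},
\]
which is exactly $\Psi_{j+1}\ge\Psi_j$. At the start, $z_0=1$ and $\delta_1=z_1-1>q$, so $\Psi_1>q^2-2\varepsilon>0$.

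I would then run a single induction on $j\ge1$ carrying three facts together: $z_0,\ldots,z_j>0$, $\delta_j>0$, and $\Psi_j\ge\Psi_1$. For the step, note first that $\Psi_j>0$ gives $\delta_j^2z_{j-1}>2\varepsilon$. Next, $\delta_j(1+\delta_j)\ge\delta_j^2\ge\delta_j^2/2$ together with $z_j=(1+\delta_j)z_{j-1}$ gives
\[
 \delta_jz_j=\delta_j(1+\delta_j)z_{j-1}\ge\tfrac12\delta_j^2z_{j-1}>\varepsilon,
\]
that is, $\delta_j>\varepsilon/z_j$. This is the hypothesis needed both for the squaring step and for positivity. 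Indeed, the recurrence gives $z_{j+1}\ge z_j(1+\delta_j-\varepsilon/z_j)>z_j>0$, so $z_{j+1}>0$ and $\delta_{j+1}>0$. The monotonicity above then yields $\Psi_{j+1}\ge\Psi_j\ge\Psi_1$, closing the induction.

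Finally, monotonicity gives, for every $j\ge1$,
\[
 \delta_j^2\ge\Psi_1+\frac{2\varepsilon}{z_{j-1}}>q^2-2\varepsilon.
\]
Hence $\rho_j\ge1+\sqrt{q^2-2\varepsilon}$ for every $j\ge1$. Multiplying these ratio bounds and using $z_0=1$ gives $z_j\ge(1+\sqrt{q^2-2\varepsilon})^j$.

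The only delicate point is the sign condition $\delta_j\ge\varepsilon/z_j$ needed before squaring. I expect it to follow, as sketched, from the positivity of $\Psi_j$ itself, so that the energy bound and the positivity are proved together in one induction.
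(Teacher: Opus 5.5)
Your proposal is correct and follows essentially the same route as the paper. You use the same energy $(r_j-1)^2-2\varepsilon/z_{j-1}$ and the same joint induction on positivity of the ratio excess and monotonicity of that energy, then multiply the same ratio bounds; your telescoping identity $\delta_j/z_j=1/z_{j-1}-1/z_j$ is just a repackaging of the paper's substitution $\varepsilon/z_{j-1}=r_j\,\varepsilon/z_j$, which additionally records the gain $E_{j+1}\ge E_j+(\varepsilon/z_j)^2$.
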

\begin{proof}
For every index at which the terms are positive, write
\[
 r_j=\frac{z_j}{z_{j-1}},\qquad
 E_j=(r_j-1)^2-\frac{2\varepsilon}{z_{j-1}}.
\]
The initial assumptions give
$r_1>1$ and $E_1>q^2-2\varepsilon>0$.
Suppose $r_j>1$ and $E_j\ge E_1$. Put
$t=r_j-1>0$ and $\delta=\varepsilon/z_j$.
Because $\varepsilon/z_{j-1}=r_j\delta$, positivity of $E_j$
gives $t^2>2r_j\delta$. Therefore
\[
 \delta<\frac{t^2}{2r_j}<t,
\]
where the last inequality uses $t<r_j$.
The recurrence now yields
$z_{j+1}\ge z_j(r_j-\delta)>z_j>0$.
The next ratio is thus defined and satisfies
$r_{j+1}\ge r_j-\delta>1$.
Since $v\mapsto(v-1)^2$ is increasing on $(1,\infty)$,
\begin{align*}
 E_{j+1}
 &\ge (r_j-\delta-1)^2-\frac{2\varepsilon}{z_j}\\
 &=t^2-2t\delta+\delta^2-2\delta\\
 &=t^2-2r_j\delta+\delta^2
 =E_j+\delta^2\ge E_1.
\end{align*}
This proves the induction. It also gives
$(r_j-1)^2\ge E_j\ge E_1>q^2-2\varepsilon$ for every $j$.
Because $r_j>1$, we have
$r_j>1+\sqrt{q^2-2\varepsilon}$.
Multiplication of these ratio inequalities proves the claimed bound.
\end{proof}

\subsection{The Norm Bound}

Only the quadratic observation is separated from the geometric tail.
All higher degrees enter through one pair of constants.

\begin{proposition}\label{prop:prefix-operator}
Let $R:\mathcal H\to\mathcal H$ and $L:\mathcal E\to\mathcal H$
be bounded operators. Suppose that, for some $a>0$,
\[
 R^*R\preceq g_a(RR^*+LL^*),\qquad
 g_a(t)=\frac{t}{1+at},
\]
and
\begin{equation}\label{eq:prefix-observations}
 \|RL\|^2\le2,\qquad
 \|R^mL\|^2\le B K^{m-2}\quad(m\ge2),\qquad
 B=\frac{121}{4},\quad K=\frac{223}{10}.
\end{equation}
Then $\|R\|^2\le24$.
\end{proposition}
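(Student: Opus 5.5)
We argue by contradiction: assume $\|R\|^2>24$. The two inputs are the scalar recurrence of Lemma~\ref{lem:radius-iteration} and the growth bound of Lemma~\ref{lem:scalar-growth}, played against the spectral-radius formula \eqref{eq:spectral-radius-formula}. First, Lemma~\ref{lem:radius-iteration} applies with $m_0=2$ and $C=B/K$, since $\|R^mL\|^2\le BK^{m-2}=(B/K)K^{m-1}$. Hence $r(R)^2\le K$, and by \eqref{eq:spectral-radius-formula}, for every unit $f$,
\[
\limsup_m u_m^{1/m}\le\lim_m\|(R^*)^m\|^{2/m}=r(R)^2\le K,\qquad u_m=\|(R^*)^mf\|^2 .
\]
Since $\|R^*\|=\|R\|$, we can fix a unit vector $f$ with $u_1>24$, and we let $u_0=1$.

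The first step of the recurrence ($m=1$) carries the exceptional error $\|RL\|^2\le2$, so we treat it separately. It gives $u_2\ge u_1^2-2$, hence
\[
\frac{u_2}{u_1}\ge u_1-\frac{2}{u_1}>24-\frac1{12}.
\]
This is justified because $t\mapsto t-2/t$ is increasing, and it shows $u_2>0$.

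For the later steps ($m\ge2$) we normalize so that the error becomes a constant. Set
\[
z_j=\frac{u_{j+1}}{u_1K^{j}}\qquad(j\ge0),
\]
so $z_0=1$. Dividing the recurrence of Lemma~\ref{lem:radius-iteration} at index $m=j+1\ge2$ by $u_1K^{j+1}$ gives
\[
z_{j+1}\ge\frac{z_j^2}{z_{j-1}}-\varepsilon,\qquad \varepsilon=\frac{B}{u_1K^2}<\frac{B}{24K^2}\approx0.00254 .
\]
This is exactly hypothesis \eqref{eq:scalar-normalized-recurrence}; the positivity clause matches, since $u_m>0$ is what the lemma requires at each stage. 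Next,
\[
z_1=\frac{u_2}{u_1K}>\frac{24-1/12}{22.3}\approx1.0725,
\]
so we may take $q=(24-1/12)/K-1\approx0.0725$ and $q^2\approx0.00526$. This exceeds $2\varepsilon<0.00508$, so the hypotheses of Lemma~\ref{lem:scalar-growth} hold, and it gives
\[
z_j\ge\rho^j,\qquad \rho=1+\sqrt{q^2-2\varepsilon}>1 .
\]

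Translating back, $u_{j+1}\ge u_1(\rho K)^j$, so $\limsup_m u_m^{1/m}\ge\rho K>K$. This contradicts the first paragraph, and therefore $\|R\|^2\le24$.

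The main obstacle is the numerical margin in checking $q^2>2\varepsilon$, which is tight: roughly $0.00526$ against $0.00508$. The argument therefore depends on the right normalization. Including the $m=1$ step inside the geometric normalization by $K^j$ would fail. One must peel off the exceptional quadratic step and divide the remaining errors by $u_1$, which is what reduces $\varepsilon$ to $B/(u_1K^2)$. In the write-up I would verify these two numbers with exact rational arithmetic.
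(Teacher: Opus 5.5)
Your proposal is correct and matches the paper's proof step for step: you separate the $\|RL\|^2\le2$ step, normalize by $z_j=K^{-j}u_{j+1}/u_1$, and choose $q=(24-1/12)/K-1$, which is exactly the paper's $q=\Delta/(Ks)$ with $s=24$ and $\Delta=s^2-Ks-2$. The only cosmetic difference is that you take $\varepsilon=B/(u_1K^2)$ rather than the paper's $\varepsilon=B/(K^2s)$. The paper certifies the tight margin $q^2>2\varepsilon$ exactly via $\Delta^2-2Bs=1336/25>0$, which is the rational check you planned.
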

\begin{proof}
Lemma~\ref{lem:radius-iteration}, applied with $m_0=2$ and
$C=B/K$, gives $r(R)^2\le K$ and the recurrence for adjoint iterates.
Set $s=24$ and
\[
 \Delta=s^2-Ks-2,\qquad
 q=\frac{\Delta}{Ks},\qquad
 \varepsilon=\frac B{K^2s}.
\]
These parameters satisfy
\[
 \Delta=\frac{194}{5}>0,\qquad
 \Delta^2-2Bs=\frac{1336}{25}>0.
\]
Consequently $q>0$ and
\[
 q^2-2\varepsilon=\frac{\Delta^2-2Bs}{K^2s^2}>0.
\]

Suppose for a contradiction that $\|R\|^2>s$.
Since $\|R^*\|=\|R\|$, there is a unit vector $f$ such that
$u_1:=\|R^*f\|^2>s$.
For $j\ge0$ put $u_j=\|(R^*)^jf\|^2$, so $u_0=1$.
The recurrence from Lemma~\ref{lem:radius-iteration} gives
\[
 u_2\ge u_1^2-2,
 \qquad
 \frac{u_2}{u_1}\ge u_1-\frac2{u_1}
 >s-\frac2s=K(1+q)>0.
\]
Here the strict inequality follows because the derivative of
$v-2/v$ is $1+2/v^2>0$ for $v>0$.
In particular $u_2>0$.
Define
\[
 z_j=K^{-j}\frac{u_{j+1}}{u_1}\qquad(j\ge0).
\]
Then $z_0=1$ and $z_1>1+q$.
For $j\ge1$, whenever $u_j$ and $u_{j+1}$ are positive, the
recurrence and \eqref{eq:prefix-observations} give
\begin{align*}
 z_{j+1}
 &=K^{-(j+1)}\frac{u_{j+2}}{u_1}\\
 &\ge K^{-(j+1)}\frac{u_{j+1}^2}{u_1u_j}
       -K^{-(j+1)}\frac{BK^{j-1}}{u_1}\\
 &=\frac{z_j^2}{z_{j-1}}-\frac B{K^2u_1}
 \ge\frac{z_j^2}{z_{j-1}}-\varepsilon.
\end{align*}
Lemma~\ref{lem:scalar-growth} applies and proves positivity of all
terms, together with $z_j\ge\eta^j$ for
$\eta=1+\sqrt{q^2-2\varepsilon}>1$.
It follows that
\[
 \|(R^*)^{j+1}\|^2\ge u_{j+1}
 =u_1K^jz_j\ge u_1(K\eta)^j.
\]
Taking the $(j+1)$st root gives
\[
 \|(R^*)^{j+1}\|^{2/(j+1)}
 \ge u_1^{1/(j+1)}(K\eta)^{j/(j+1)}.
\]
As $j\to\infty$, the left side tends to $r(R)^2\le K$ by
\eqref{eq:spectral-radius-formula}, while the right side tends to
$K\eta>K$. This contradiction proves $\|R\|^2\le24$.
\end{proof}

\section{Proof of Theorem~\ref{thm:main}}\label{sec:completion}

Thus far, we have shown that the normalized Appell coefficient norms
satisfy geometric bounds (Sections~\ref{sec:static-coefficients}
and~\ref{sec:coefficient-certificates}). Sections~\ref{sec:full-hodge}
and~\ref{full-integration-section} relate these bounds to repeated
integration of constant tensors and establish the Hodge comparison
on compatible fields. The operator estimate of
Section~\ref{sec:operator-estimate} now gives $\|J\|^2\le24$ and hence
$C_P(\mu)\le25$ for regular isotropic measures. Since this bound is
independent of the auxiliary curvature, approximation extends it to
all isotropic log-concave laws. 

\begin{proof}[Proof of Theorem~\ref{thm:main}]
Let $\mu$ first be regular and isotropic.
Theorem~\ref{thm:geometric-coefficients} gives
\[
 c_1=1,\qquad c_2^2\le2,\qquad
 c_d^2\le\frac{121}{4}
       \left(\frac{223}{10}\right)^{d-3}\quad(d\ge3).
\]
The Hodge estimate and centered integration give
\eqref{full-direct-sum-comparison} and
\eqref{full-direct-sum-observations}. The hypotheses of
Proposition~\ref{prop:prefix-operator} therefore hold with $R=J$.
Consequently
\[
 \|J\|^2\le24,\qquad C_P(\mu)\le1+\|J\|^2\le25.
\]
Initial boundedness of $J$ used only the independent
Brascamp--Lieb estimate $C_P\le1/a$.
The bound just obtained does not depend on $a$.

For a general isotropic log-concave law,
Lemma~\ref{lem:static-regular-approximation} provides regular isotropic
approximants converging weakly to it. Their curvature constants may
tend to zero, but they all satisfy the bound $25$.
For a fixed $f\in C_c^\infty$, the functions $f,f^2,|\nabla f|^2$
are bounded and continuous, so their integrals converge. Passing to
the limit proves the Poincar\'e inequality for these test functions.
Lemma~\ref{finite-energy-extension} extends it to every locally
Lipschitz function with finite gradient energy and also proves its
square integrability.

\end{proof}

\appendix

\section{Compactness of Isotropic Log-concave Laws}
\label{app:static-compactness}

We record the standard compactness facts used in
Section~\ref{sec:static-coefficients}. Compactness is stated by~\cite[Section~2, paragraph following Lemma~2.2]{KlartagCLT},
and continuity of moments is given by~\citet[Corollary~6]{MeckesMeckes}.
The uniform exponential-moment bound is an immediate consequence of
Borell's lemma in the form stated in
\cite[Lemma~2.1(i)]{KlartagCLT}.

\begin{lemma}[Compactness and Moment Continuity]
\label{lem:static-compactness}
Fix $n\ge1$, and let $\mathcal C_n$ be the class of isotropic log-concave
probability measures on $\R^n$. Then $\mathcal C_n$ is compact for weak
convergence, and for every polynomial $p:\R^n\to\R$, the map
\[
 \mu\longmapsto\int p\,d\mu
\]
is continuous on $\mathcal C_n$. In particular, cumulants of every fixed
order are continuous on this class. Moreover, there exist constants
$c_n>0$ and $C_n<\infty$ such that
\[
 \sup_{\mu\in\mathcal C_n}\int e^{c_n|x|}\,d\mu(x)\le C_n.
\]
The constants may depend on $n$.
\end{lemma}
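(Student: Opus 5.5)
The plan is to prove the three assertions in order: tightness and closedness for compactness, then the exponential bound, then moment continuity, which uses the exponential bound for uniform integrability.

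\emph{Compactness.} Every $\mu\in\mathcal C_n$ has second moment $\int|x|^2\,d\mu=n$. Chebyshev's inequality gives $\mu(|x|>R)\le n/R^2$, so the class is tight, and Prokhorov's theorem gives relative compactness. It remains to show that a weak limit $\mu$ of a sequence $\mu_k\in\mathcal C_n$ again lies in $\mathcal C_n$. I would use the set form of log-concavity, $\mu(\lambda A+(1-\lambda)B)\ge\mu(A)^\lambda\mu(B)^{1-\lambda}$. For compact $A,B$, pass to the limit using open $\epsilon$-neighbourhoods together with the portmanteau theorem, and then let $\epsilon\to0$. Borell's characterization then gives a log-concave density on the affine support.

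\emph{Exponential moments.} By Borell's lemma, $\mu(|x|>tR)\le C e^{-ct}$ for $t\ge1$, with $R$ chosen so that $\mu(|x|\le R)\ge2/3$; Chebyshev allows $R=\sqrt{3n}$ uniformly in $\mu$. Integrating this tail bound gives $\int e^{c_n|x|}\,d\mu\le C_n$ with $c_n=c/(2R)$, uniformly over the class. This is the cited \citep[Lemma~2.1(i)]{KlartagCLT}, and it holds for every log-concave law without using isotropy of the limit.

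\emph{Moment continuity.} For a polynomial $p$ we have $|p(x)|\le C_p(1+|x|)^{\deg p}\le C'_p e^{c_n|x|/2}$. The uniform exponential bound therefore makes $p$ uniformly integrable over $\mathcal C_n$. Weak convergence together with uniform integrability gives $\int p\,d\mu_k\to\int p\,d\mu$, by truncating $p$ at level $M$ and controlling the tails uniformly. In particular the mean and covariance pass to the limit, so the limit is isotropic. This closes the compactness argument and shows that $\mu$ is a nondegenerate isotropic log-concave law.

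Cumulants of fixed order are polynomial functions of finitely many moments, so they are continuous on the class as well.

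The main obstacle is the closedness step, namely that log-concavity survives weak limits. Isotropy excludes degeneration to a lower-dimensional support, since the limit has identity covariance. Granted this step, the rest is routine uniform integrability.
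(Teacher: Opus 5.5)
Your argument is correct and follows the same route as the paper, which gives no proof of its own: it cites Klartag for compactness, Meckes--Meckes for moment continuity, and Borell's lemma (in Klartag's Lemma~2.1(i)) for the uniform exponential bound, and your Prokhorov/portmanteau closedness step and uniform-integrability argument are the standard proofs behind those citations. You also handle the ordering correctly: isotropy of a weak limit comes from moment convergence, which needs only the uniform Borell tail of the approximating laws $\mu_k$, so the argument is not circular.
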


\section{Primitives and Operator Domains}\label{sec:full-domains}

We supply the primitive and operator-domain statements used in
Sections~\ref{sec:full-hodge} and~\ref{full-integration-section}.
Throughout this appendix $V\in C^\infty$, $aI\preceq D^2V\preceq MI$,
and $0<a\le M<\infty$. In particular, the scalar Poincar\'e constant is
finite. This follows independently of the compatible-tensor estimate
from the Brascamp--Lieb inequality
\cite[Equation~(1.3)]{carlen-cordero-lieb}:
\begin{equation}\label{full-initial-poincare}
 \operatorname{Var}_\mu f
 \le\int\langle\nabla f,(D^2V)^{-1}\nabla f\rangle\,d\mu
 \le a^{-1}\int|\nabla f|^2\,d\mu.
\end{equation}
The inequality extends from smooth functions to $W^{1,2}(\mu)$ by
cutoff and mollification: on each compact set the smooth positive
density and its reciprocal are bounded, and the cutoff derivative is
bounded by a constant times the inverse cutoff radius. We write
$C_P=C_P(\mu)$ for the best scalar constant, so $C_P\le a^{-1}$.

\subsection{Centered Primitives and Potential Cores}

The existence of local Sobolev primitives is the standard Poincar\'e
lemma; see, for example,~\citet[Proposition~4.1(i)]{CostabelMcIntosh}.
For the cutoff and mollification method in weighted spaces, see, for example, \citet[Lemma~4.1]{Johnsen}. The following lemma adapts these
arguments to global centered primitives in the symmetric-tensor spaces,
using the preliminary scalar Poincar\'e inequality.
We provide the details for completeness, including the density of
compactly supported potential fields.

\begin{lemma}\label{full-primitive}
For every $r\ge0$ and $F\in\mathcal C_{r+1}$, there is a unique
$U\in\mathcal G_r\cap W^{1,2}(\mu)$ such that
\[
 \nabla U=F,\qquad \|U\|_2\le\sqrt{C_P}\,\|F\|_2.
\]
Consequently $D_r$ is a closed, densely defined bijection and its inverse
$\widetilde J_r$ is bounded. For $s\ge1$, the set
\[
 \mathcal K_s=\{\nabla^s\psi:\psi\in C_c^\infty(\mathbb R^n)\}
\]
is dense in $\mathcal C_s$.
\end{lemma}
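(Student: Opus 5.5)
We argue by induction on the rank $r$, using the preliminary scalar Poincar\'e inequality \eqref{full-initial-poincare} componentwise. Fix $F\in\mathcal C_{r+1}$.

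\textbf{Step 1: a local primitive.} By the Poincar\'e lemma of \citet[Proposition~4.1(i)]{CostabelMcIntosh}, compatibility of $F$ gives, on each ball, a rank-$r$ distributional primitive $U$ with $\nabla U=F$. Two primitives differ by a tensor with vanishing gradient, hence by a constant. The local primitives therefore glue to a global $U\in W^{1,2}_{\rm loc}$ with $\nabla U=F$. This $U$ is automatically symmetric: $\partial_jU_{I}=F_{Ij}$ is symmetric in all indices, so the antisymmetric part of $U$ is a constant, which we remove. It is also compatible, because $\partial_iU_{jI}=F_{jIi}=F_{iIj}=\partial_jU_{iI}$.

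\textbf{Step 2: global integrability and the bound.} Each component $U_I$ is a locally Sobolev function with $\nabla U_I=(F_{Ij})_j\in L^2(\mu)$. We must show $U_I\in L^2(\mu)$ without circularity. Apply \eqref{full-initial-poincare} to truncations $\max(-N,\min(U_I,N))$ and then to cutoffs $\eta_R U_I$. Since the Poincar\'e inequality controls the variance, $U_I-c$ lies in $L^2$ for the appropriate constant $c$, and a limit argument (Fatou, together with monotone truncation levels) gives $U_I-\mathbb EU_I\in L^2(\mu)$ with $\operatorname{Var}U_I\le C_P\int|\nabla U_I|^2\,d\mu$. Subtracting the mean tensor, which is constant and symmetric, gives $U\in\mathcal G_r\cap W^{1,2}(\mu)$. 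Summing over ordered tuples $I$ then yields $\|U\|_2^2\le C_P\|F\|_2^2$. Uniqueness follows because a difference of two such primitives has zero gradient and zero mean.

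\textbf{Step 3: operator consequences.} $D_r$ is injective by uniqueness and surjective onto $\mathcal C_{r+1}$ by Steps 1--2, and $\widetilde J_r$ is bounded. $D_r$ is closed because distributional derivatives pass to $L^2$ limits. Its domain is dense in $\mathcal G_r$ because centered smooth compactly supported compatible fields (gradients of $C_c^\infty$ potentials, recentered) are dense there by the last assertion applied at rank $r$.

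\textbf{Step 4: density of $\mathcal K_s$.} This is the main obstacle. Given $F\in\mathcal C_s$, iterate Step 1 to write $F=\nabla^s\phi$ with $\phi\in W^{s,2}(\mu)$, using the bounds from Step 2 at each level. Approximate by $\nabla^s(\eta_R\phi)$, mollified. The product-rule error terms are $R^{-j}\nabla^{s-j}\phi$, which tend to $0$ in $L^2(\mu)$ since every lower derivative of $\phi$ is square integrable. Mollification on the compact support converges because the density and its reciprocal are bounded there. The delicate point is having all lower derivatives $\nabla^{s-j}\phi$ in $L^2(\mu)$; the iterated centered primitives supply exactly this, which is why the induction on rank is organized to produce them first.
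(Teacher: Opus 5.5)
Your proposal is correct and follows essentially the paper's route: componentwise scalar primitives, truncation plus Fatou for global square integrability, iterated centered primitives, and cutoff--mollification for the density of $\mathcal K_s$. The paper builds the primitives explicitly by mollification and the radial formula $h_\varepsilon(x)=\int_0^1\langle f_\varepsilon(tx),x\rangle\,dt$ instead of citing the Poincar\'e lemma and gluing. Two points you leave implicit: Fatou needs uniform control of the means of the truncations, which the paper gets by truncating about a median so that $(\mathbb E h_N)^2\le\operatorname{Var}h_N\le C_P\|f\|_2^2$; and for $r=0$, density of $\operatorname{Dom}(D_0)$ comes from density of $C_c^\infty$ in $L^2(\mu)$, not from $\mathcal K_s$ with $s\ge1$.
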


\begin{proof}
First consider a locally square-integrable vector field $f$ satisfying
$\partial_if_j=\partial_jf_i$ in distributions. Convolve it with a
compactly supported smooth approximate identity to obtain smooth
closed fields $f_\varepsilon$. The functions
\[
 h_\varepsilon(x)=\int_0^1\langle f_\varepsilon(tx),x\rangle\,dt
\]
satisfy $\nabla h_\varepsilon=f_\varepsilon$. Indeed,
\[
 \partial_jh_\varepsilon(x)
 =\int_0^1\left(f_{\varepsilon,j}(tx)
        +t\sum_i x_i\partial_jf_{\varepsilon,i}(tx)\right)dt
 =\int_0^1\frac d{dt}\bigl(t f_{\varepsilon,j}(tx)\bigr)dt
 =f_{\varepsilon,j}(x).
\]
Subtract the Lebesgue mean of $h_\varepsilon$ on the unit ball. For every
$R\ge1$, the usual Poincar\'e inequality on $B_R$ then gives
\[
 \|h_\varepsilon-h_\delta\|_{L^2(B_R)}
 \le C_R\|f_\varepsilon-f_\delta\|_{L^2(B_R)}.
\]
To verify the use of the mean on $B_1$ rather than $B_R$, put
$h=h_\varepsilon-h_\delta$ and let $h_R$ be its mean on $B_R$.
Since the mean of $h$ on $B_1$ is zero,
$|h_R|\le |B_1|^{-1/2}\|h-h_R\|_{L^2(B_R)}$; applying the ball
Poincar\'e inequality to $h-h_R$ proves the displayed estimate.
Thus $h_\varepsilon$ converges in $W^{1,2}(B_R)$ for every $R$ to a
locally Sobolev primitive $h$ of $f$.

If $f\in L^2(\mu)$, this primitive has an $L^2(\mu)$ representative
modulo constants. Choose a median of $h$ to be zero and set
$h_N=\max(-N,\min(h,N))$. The median of $h_N$ is zero. For a random
variable $Z$ with median zero, $(\mathbb EZ)^2\le\operatorname{Var}Z$:
if $\mathbb EZ\ge0$, then
$\mathbb EZ\le\mathbb E[Z\mathbf1_{\{Z>0\}}]$ and
$\mathbb P(Z>0)\le1/2$, so Cauchy--Schwarz gives
$(\mathbb EZ)^2\le\mathbb EZ^2/2$; the other case follows by replacing
$Z$ with $-Z$. Applying this to $h_N$ and then the scalar Poincar\'e
inequality gives
\[
 \mathbb E h_N^2\le2\operatorname{Var}h_N
 \le2C_P\mathbb E|\nabla h_N|^2
 \le2C_P\mathbb E|f|^2.
\]
Here $h_N$ belongs to $W^{1,2}(\mu)$ because it is bounded and its
weak gradient is bounded by $|f|$.
Fatou's lemma proves $h\in L^2(\mu)$. Subtracting its mean and applying
the scalar inequality once more gives
$\|h-\mathbb Eh\|_2^2\le C_P\|f\|_2^2$.

Apply this construction to $(F_{iI})_i$ for each ordered $r$-tuple $I$,
and choose each primitive $U_I$ to have mean zero. Symmetry of $F$
shows that permuting $I$ does not change the gradient of $U_I$;
uniqueness of a centered primitive therefore makes $U$ symmetric.
Also $\partial_iU_{jI}=F_{ijI}=F_{jiI}=\partial_jU_{iI}$, so $U$ is
compatible. Summing the scalar inequalities proves the tensor bound.
Any two such primitives differ by a constant tensor, whose zero mean
forces it to vanish.

Iterating the construction, every $F\in\mathcal C_s$ can be written
as $F=\nabla^s\phi$ with $\phi\in W^{s,2}(\mu)$ and every lower
primitive centered. Choose a smooth cutoff $\chi_R$, equal to one on
$B_R$, supported in $B_{2R}$, and satisfying
$|\nabla^j\chi_R|\le C_jR^{-j}$.
The product rule and the square integrability of the derivatives of
$\phi$ show that $\chi_R\phi\to\phi$ in $W^{s,2}(\mu)$: the
undifferentiated cutoff terms converge by dominated convergence, and
each remaining term has norm at most
$C_jR^{-j}\|\nabla^{s-j}\phi\|_2$.
Mollify each compactly supported $\chi_R\phi$. Equivalence of weighted
and unweighted Sobolev norms on its compact support gives smooth
compactly supported approximations in all required norms. A diagonal
sequence proves density of $\mathcal K_s$ in $\mathcal C_s$.
For $s=0$, cutoff and mollification directly give density in $L^2(\mu)$.

Centering these approximants gives a dense subset of
$\operatorname{Dom}(D_r)$ in $\mathcal G_r$. Distributional derivatives
are closed in $L^2(\mu)$, so $D_r$ is closed. The primitive construction
proves surjectivity, injectivity, and the stated bounded inverse.
\end{proof}

\subsection{The Constrained Complex}

For ordinary weighted differential forms, smooth graph cores and the
equality of minimal and maximal realizations are established in
Johnsen~\cite[Lemmas~4.1 and~4.3]{Johnsen}.
Here these arguments are adapted to the fixed constrained fibers
$\mathcal F_p$, whose adjoints include the fiber projections.
The projection variational identity uses the adjoint-range duality
underlying \cite[Theorem~3.1]{Johnsen}, with the closure of the range
and hence without a closed-range assumption.
We give the proof for completeness to verify the projected adjoints,
the common graph core, and this variational formulation in our setting.

\begin{lemma}
\label{full-complex-domains}
Use the fixed fiber spaces $\mathcal F_p$ and operators $d_p$ of
Section~\ref{sec:full-hodge}. Their Hilbert adjoints have the maximal
distributional expression $d_p^*=P_pd^*$.
Smooth compactly supported $\mathcal F_2$-valued fields are a core for the joint
graph norm of $(d_2,d_1^*)$.
For $u\in\operatorname{Dom}(d_1)$,
\begin{equation}\label{full-projection-variational}
 \|(I-P_{\ker d_1})u\|_2^2
 =\sup_{\omega\in\operatorname{Dom}(d_1^*)\cap\operatorname{Dom}(d_2)}
   \left\{2\langle d_1u,\omega\rangle
                   -\|d_1^*\omega\|_2^2-\|d_2\omega\|_2^2\right\}.
\end{equation}
\end{lemma}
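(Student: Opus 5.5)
The statement has three parts: an adjoint identification, a smooth core, and a variational identity. I would prove them in that order. The variational identity is abstract Hilbert-space duality once the first two parts are in hand.

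\emph{Adjoint.} For smooth compactly supported $u\in C_c^\infty(\mathbb R^n;\mathcal F_p)$ and $\omega\in C_c^\infty(\mathbb R^n;\mathcal F_{p+1})$, weighted integration by parts gives $\langle du,\omega\rangle=\langle u,d^*\omega\rangle$. Since $u$ takes values in $\mathcal F_p=\ker b$, this equals $\langle u,P_pd^*\omega\rangle$. Here $P_p=bb^*/(m+p-1)$ is a constant pointwise projection, so it commutes with multiplication by test functions.

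To identify the Hilbert adjoint with the maximal distributional realization of $P_pd^*$, I would argue in both directions.
\begin{itemize}
\item Suppose $\omega\in\operatorname{Dom}(d_p^*)$. Testing against $u=\chi\,\xi$, with $\chi\in C_c^\infty$ scalar and $\xi$ a constant element of $\mathcal F_p$, shows that $P_pd^*\omega$ in distributions equals $d_p^*\omega\in L^2$.
\item Conversely, suppose the distribution $P_pd^*\omega$ lies in $L^2$. For $u$ in the maximal domain of $d_p$, I need $\langle d_pu,\omega\rangle=\langle u,P_pd^*\omega\rangle$. This requires a smooth compactly supported core for $d_p$ itself. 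I get it by cutoff $\chi_R$ and mollification. Both preserve the fiber because $\mathcal F_p$ is a fixed subspace, and the commutator $[d,\chi_R]=\sum_i\varepsilon_i(\partial_i\chi_R)$ is bounded by $C/R$. On compact sets the weight and its reciprocal are bounded, so weighted and unweighted norms are equivalent.
\end{itemize}

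\emph{Joint core for $(d_2,d_1^*)$.} Given $\omega$ in both domains, I take $\omega_R=\chi_R\omega$. Then $d\omega_R=\chi_Rd\omega+\sum_i\partial_i\chi_R\,\varepsilon_i\omega$. For the adjoint,
\[
 P_1d^*\omega_R=\chi_RP_1d^*\omega-P_1\sum_i\partial_i\chi_R\,\iota_i\omega,
\]
and both error terms are $O(R^{-1}\|\omega\|_2)$. Next I mollify the compactly supported $\omega_R$ componentwise, which keeps values in $\mathcal F_2$. The main issue is that $d^*$ involves $V_i$, while mollification commutes only with the constant-coefficient part. Friedrichs' lemma handles this: $V$ is smooth, so on the compact support the commutator $[\rho_\varepsilon*,V_i]$ applied to $\omega_R$ tends to zero in $L^2$. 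The projection $P_1$ is constant and so passes through. This Friedrichs commutator step is the one I expect to need the most care.

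\emph{Variational identity.} Set $T=d_1$. This is closed and densely defined, and by the first part $T^*=d_1^*$. From $d^2=0$ we have $\operatorname{Ran}d_1\subset\ker d_2$. Since $\ker T$ is closed and $(\ker T)^\perp=\overline{\operatorname{Ran}T^*}$, the quantity $\|(I-P_{\ker T})u\|^2$ is the squared norm of the projection of $u$ onto $\overline{\operatorname{Ran}d_1^*}$.

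\emph{Upper bound.} For any admissible $\omega$, first $\langle d_1u,\omega\rangle=\langle u,d_1^*\omega\rangle=\langle u_\perp,d_1^*\omega\rangle$, where $u_\perp=(I-P_{\ker d_1})u$. By Cauchy--Schwarz,
\[
 2\langle d_1u,\omega\rangle-\|d_1^*\omega\|^2-\|d_2\omega\|^2\le\|u_\perp\|^2.
\]

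\emph{Lower bound.} Take $\omega$ ranging over $d_1(\operatorname{Dom}d_1)$, for which $d_2\omega=0$. With $\omega=d_1\eta$ the expression becomes $2\langle u_\perp,d_1^*d_1\eta\rangle-\|d_1^*d_1\eta\|^2$. This requires $\eta\in\operatorname{Dom}(d_1^*d_1)$; then $\omega\in\operatorname{Dom}(d_1^*)$. The supremum over such $\eta$ equals $\|u_\perp\|^2$ provided $\operatorname{Ran}(d_1^*d_1)$ is dense in $(\ker d_1)^\perp$. This density is standard von Neumann theory: $d_1^*d_1$ is self-adjoint with kernel $\ker d_1$, so its range is dense in the orthogonal complement of that kernel. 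No closed-range assumption is needed.
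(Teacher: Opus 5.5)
Your proposal is correct, and for the first two parts it is the paper's argument. The adjoint identification tests against compactly supported sections in one direction and uses a cutoff--mollification core for the maximal $d_p$ in the other. The joint core for $(d_2,d_1^*)$ uses cutoffs with $O(R^{-1})$ commutators, followed by mollification with the $V_i$ term handled by a commutator estimate. The Cauchy--Schwarz upper bound in the variational identity is also the same. The step you flagged as delicate is routine: the $V_i$ contribution is a zeroth-order smooth coefficient $B(x)$, so $B(\rho_\varepsilon*\omega_R)\to B\omega_R$ in $L^2$ on the fixed compact support, which is what the paper's explicit commutator integral records.

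The genuine difference is the lower bound in the variational identity.
\begin{itemize}
\item The paper takes an arbitrary $\omega\in\operatorname{Dom}(d_1^*)$ and replaces it by $P_{\ker d_2}\omega$. Since $\operatorname{Ran}d_1\subset\ker d_2$, one has $\langle d_1v,P_{\ker d_2}\omega\rangle=\langle v,d_1^*\omega\rangle$ for every $v\in\operatorname{Dom}(d_1)$. The projected field therefore lies in $\operatorname{Dom}(d_1^*)\cap\ker d_2$, with the same $d_1^*$-image and zero $d_2$-term. The conclusion then follows from $\overline{\operatorname{Ran}d_1^*}=(\ker d_1)^\perp$, which needs only $d_1^{**}=d_1$.
\item You use the test fields $\omega=d_1\eta$ with $\eta\in\operatorname{Dom}(d_1^*d_1)$, which lie in $\ker d_2$ automatically. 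You then invoke von Neumann's theorem that $d_1^*d_1$ is self-adjoint, together with $\ker(d_1^*d_1)=\ker d_1$, to get density of $\operatorname{Ran}(d_1^*d_1)$ in $(\ker d_1)^\perp$.
\end{itemize}
Neither route needs a closed-range assumption. The paper's is slightly more elementary, since it uses only the biadjoint identity. Yours uses a heavier theorem, but you never have to check that $P_{\ker d_2}$ preserves $\operatorname{Dom}(d_1^*)$. It also shows that the supremum is already approached along exact fields $\omega\in\operatorname{Ran}d_1$, a marginally stronger statement than the one required.
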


\begin{proof}
Each maximal $d_p$ is closed: convergence of a field and its image in
$L^2(\mu)$ implies local distributional convergence of both, identifying
the limiting derivative. Its domain is dense because it contains all
smooth compactly supported $\mathcal F_p$-valued fields. If $u\in\operatorname{Dom}(d_p)$,
then $d(d_pu)=0$ in distributions, so $d_pu\in\operatorname{Dom}(d_{p+1})$
and $d_{p+1}d_pu=0$.

All fiber projections are constant matrices. Cutoff by $\chi_R$ preserves
the fibers and has commutators with each first-order derivative bounded
by $C/R$ times the field. Therefore it approximates fields in each
maximal derivative graph norm. On a fixed compact support, mollification
commutes with the constant-coefficient derivative terms. The remaining
zeroth-order coefficient of $P_pd^*$ is a smooth matrix function $B(x)$.
Its convolution commutator is
\[
 B(x)(\omega*\rho_\varepsilon)(x)
       -(B\omega)*\rho_\varepsilon(x)
 =\int\rho_\varepsilon(y)[B(x)-B(x-y)]\omega(x-y)\,dy.
\]
Its local $L^2$ norm tends to zero, since the supremum of
$|B(x)-B(x-y)|$ on the relevant compact set and $|y|\le\varepsilon$
tends to zero; Young's convolution inequality bounds the remaining
factor by $\|\omega\|_2$. Weighted and unweighted norms are equivalent
there. This proves both the individual and joint graph approximations.
If a field belongs to the Hilbert adjoint domain, testing against smooth
compactly supported sections identifies its distributional $P_pd^*$
with that adjoint image, which lies in $L^2(\mu)$. Conversely, if the
distributional $P_pd^*$ lies in $L^2(\mu)$, integration by parts holds
against compact sections. The graph approximation for $d_p$ extends
that identity to every element of $\operatorname{Dom}(d_p)$, proving
membership in the Hilbert adjoint domain and equality of the images.

For the last statement put $A=d_1$, $B=d_2$, and $P=P_{\ker B}$.
For $v\in\operatorname{Dom}(A)$, the distributional identity
$d^2v=0$ shows that $Av\in\operatorname{Dom}(B)$ and $BAv=0$.
Thus $\operatorname{ran}A\subset\ker B$.
For $\omega\in\operatorname{Dom}(A^*)$ and $v\in\operatorname{Dom}(A)$,
\[
 \langle Av,P\omega\rangle=\langle Av,\omega\rangle
                          =\langle v,A^*\omega\rangle.
\]
Hence $P\omega\in\operatorname{Dom}(A^*)\cap\ker B$ and
$A^*P\omega=A^*\omega$.
For any field in the supremum in
\eqref{full-projection-variational}, let $z=A^*\omega$.
Then $z\perp\ker A$, and
\[
 2\langle Au,\omega\rangle-\|A^*\omega\|^2-\|B\omega\|^2
 \le 2\langle (I-P_{\ker A})u,z\rangle-\|z\|^2
 \le\|(I-P_{\ker A})u\|^2.
\]
For the reverse inequality, replace each
$\omega\in\operatorname{Dom}(A^*)$ by $P\omega$. This preserves
$A^*\omega$ and makes the $B$ term zero. Finally,
$\overline{\operatorname{ran}A^*}=(\ker A)^\perp$: a vector orthogonal
to $\operatorname{ran}A^*$ lies in the kernel of $A^{**}=A$, and the
reverse inclusion follows from the adjoint identity.
Thus $z=A^*\omega$ can approximate $(I-P_{\ker A})u$, and
$2\langle (I-P_{\ker A})u,z\rangle-\|z\|^2$ tends to the required
squared norm. This proves the formula.
\end{proof}

\section{Approximation and Test Functions}
\label{app:static-approximation}

The approximation properties in the following lemma are standard.
For polynomial growth of derivatives of the log-density under Gaussian
convolution, see~\citet[Lemma~2.2(i)]{KlartagPutterman}.
For the sake of completeness, we provide a full proof using Moreau
regularization and compactly supported mollification of the potential.

\begin{lemma}\label{lem:static-regular-approximation}
Let $\mu$ be a full-dimensional isotropic log-concave probability measure on $\R^n$. There are isotropic probability measures $\mu_\varepsilon$, $0<\varepsilon\le1$, with smooth potentials $\widetilde W_\varepsilon$ such that
\[
 0<a_\varepsilon I\preceq D^2\widetilde W_\varepsilon
       \preceq a_\varepsilon^+I<\infty.
\]
Every derivative of $\widetilde W_\varepsilon$ has polynomial growth. Moreover, $\mu_\varepsilon$ converges weakly to $\mu$, and
\[
 \int p\,d\mu_\varepsilon\longrightarrow\int p\,d\mu
\]
for every polynomial $p$. The construction uses only minimization, convolution of the potential with a compactly supported smooth kernel, and affine changes of variables.
\end{lemma}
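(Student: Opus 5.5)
We construct $\mu_\varepsilon$ in three stages: smooth the potential, add a small quadratic, then renormalize by an affine map. Write $d\mu=e^{-V}dx$ with $V$ convex and lower semicontinuous, possibly $+\infty$ outside the convex support $K$, which has nonempty interior because $\mu$ is full-dimensional.

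\emph{Step 1: Moreau regularization.} Replace $V$ by its Moreau envelope
\[
 V_\varepsilon(x)=\inf_y\bigl\{V(y)+|x-y|^2/(2\varepsilon)\bigr\}.
\]
This function is finite, convex and $C^{1,1}$, with $0\preceq D^2V_\varepsilon\preceq\varepsilon^{-1}I$ almost everywhere. It increases pointwise to $V$ as $\varepsilon\downarrow0$, including the value $+\infty$ off $\overline K$. The envelope may grow only quadratically, which is not integrable by itself, so we add $\varepsilon|x|^2/2$. The resulting potential is $\varepsilon$-strongly convex, and its Hessian is bounded by $\varepsilon^{-1}+\varepsilon$.

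\emph{Step 2: mollification and normalization.} Convolve the potential with a compactly supported smooth probability kernel $\rho_\varepsilon$ of radius $\varepsilon$. Convolution preserves the two-sided Hessian bounds, since they are averages of translates. It yields a smooth potential $W_\varepsilon$ whose derivatives of order at least two are bounded, and whose first derivative grows at most linearly. This gives polynomial growth of every derivative. Normalize $e^{-W_\varepsilon}$ to a probability measure $\nu_\varepsilon$. Then let $\mu_\varepsilon$ be the push-forward of $\nu_\varepsilon$ under $x\mapsto\Sigma_\varepsilon^{-1/2}(x-m_\varepsilon)$, where $m_\varepsilon$ and $\Sigma_\varepsilon$ are the mean and covariance of $\nu_\varepsilon$. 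Then $\mu_\varepsilon$ is isotropic, and its potential $\widetilde W_\varepsilon$ is an affine reparametrization of $W_\varepsilon$. It keeps strict two-sided Hessian bounds, rescaled by the extreme eigenvalues of $\Sigma_\varepsilon$, and its derivatives still grow polynomially.

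\emph{Step 3: convergence, the main obstacle.} We need $\int p\,d\nu_\varepsilon\to\int p\,d\mu$ for all polynomials $p$. This gives $m_\varepsilon\to0$ and $\Sigma_\varepsilon\to I$, and hence convergence of moments and weak convergence for $\mu_\varepsilon$. Write $\widehat V_\varepsilon=(V_\varepsilon+\varepsilon|\cdot|^2/2)*\rho_\varepsilon$.
\begin{itemize}
\item \emph{Pointwise convergence.} For $x\in\operatorname{int}K$, $V$ is continuous near $x$, and $V_\varepsilon\to V$ locally uniformly there, so $\widehat V_\varepsilon(x)\to V(x)$. For $x\notin\overline K$, $V_\varepsilon\ge\operatorname{dist}(\cdot,K)^2/(2\varepsilon)+\inf V$ forces $\widehat V_\varepsilon(x)\to\infty$. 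The boundary $\partial K$ is Lebesgue-null.
\item \emph{Uniform exponential lower bound.} Since $V\ge\alpha|x|-\beta$ for some $\alpha>0$, the inequality $V(y)+|x-y|^2/(2\varepsilon)\ge\alpha|x|-\beta-\alpha^2\varepsilon/2$ gives $V_\varepsilon(x)\ge\alpha|x|-\beta'$ uniformly in $\varepsilon\le1$. This bound survives convolution, with a constant shift.
\end{itemize}
Dominated convergence with the majorant $|p|e^{-\alpha|x|+\beta''}$ then gives convergence of the normalizing constants and of all polynomial moments.

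The delicate points are the uniform linear lower bound, which supplies the domination, and the behavior of the approximants near $\partial K$. The latter is harmless because $\partial K$ has measure zero. Finally, $\Sigma_\varepsilon\to I$ keeps the affine renormalization uniformly bounded, so the weak and moment convergence transfer from $\nu_\varepsilon$ to $\mu_\varepsilon$.
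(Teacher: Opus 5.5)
Your proposal is correct and follows essentially the same route as the paper. Both use the Moreau envelope, an added $\tfrac{\varepsilon}{2}|x|^2$, mollification by a compactly supported kernel of radius $\varepsilon$, and affine whitening, with dominated convergence driven by the uniform linear lower bound $V\ge c|x|-C$ and the Lebesgue-null boundary of the support.

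The only slip is your remark that the quadratic term is needed for integrability. By your own Step~3, $e^{-V_\varepsilon}$ is already integrable; the term $\tfrac{\varepsilon}{2}|x|^2$ is what supplies the strict lower Hessian bound $a_\varepsilon>0$.
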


\begin{proof}
Write $d\mu=Z^{-1}e^{-V(x)}\,dx$, where $V:\R^n\to(-\infty,\infty]$ is proper, lower semicontinuous, and convex, and $0<Z<\infty$. The domain of $V$ has nonempty interior because $\mu$ is full-dimensional. We may use the lower semicontinuous representative: a convex potential is continuous on the interior of its domain, and changing its boundary values does not change the measure. A full-dimensional convex set has boundary of Lebesgue measure zero. Indeed, after intersecting its closure with a sufficiently large closed ball, contraction toward an interior point by a factor $1-\delta$ sends this bounded convex set into its interior. The boundary therefore has measure at most $[1-(1-\delta)^n]$ times the volume of that bounded set. Letting $\delta$ decrease to zero proves this for the bounded intersection. The portion of the original boundary inside the ball is contained in the boundary of the intersection. Increasing the ball proves the assertion.

We first establish a common tail bound for the approximants. Choose a finite level $L$ for which $\{V\le L\}$ contains a ball. Such a level exists: choose a cube whose vertices belong to the interior of the domain, and use convexity to bound $V$ on that cube by the largest of its values at the vertices. The sublevel set $K_L=\{V\le L\}$ is closed and satisfies
\[
 |K_L|\le e^L\int e^{-V(x)}\,dx=e^LZ<\infty.
\]
It is bounded. To justify this last statement, if a convex set containing a ball of radius $r>0$ also contains points arbitrarily far from the center of that ball, it contains cones with a fixed $(n-1)$-dimensional base and arbitrarily large height. The volumes of these cones tend to infinity. This contradicts $|K_L|<\infty$; in dimension one, the same argument is the assertion that an unbounded interval has infinite length.

Lower semicontinuity on the compact set $K_L$ gives a minimizer $x_*$ of $V$ there, with finite value $m=V(x_*)$. It is a global minimizer because $V>L$ outside $K_L$. The sublevel set $K_{m+1}$ is also bounded: it is contained in a sublevel set containing a ball, and the preceding finite-volume argument applies to that larger set. Choose $R>0$ so that $K_{m+1}$ is contained in the open ball $B(x_*,R)$. If $r=|x-x_*|\ge R$ and $V(x)<\infty$, convexity at $x_*+(R/r)(x-x_*)$ gives
\[
 m+1<V\!\left(x_*+\frac Rr(x-x_*)\right)
 \le m+\frac Rr\bigl(V(x)-m\bigr).
\]
Thus $V(x)>m+r/R$. For $r<R$, the inequality $V(x)\ge m$ gives $V(x)\ge m+r/R-1$. The same lower bounds hold when $V(x)=\infty$. Consequently there are constants $c>0$ and $C<\infty$ such that
\begin{equation}\label{eq:static-linear-tail}
 V(x)\ge c|x|-C\qquad(x\in\R^n).
\end{equation}
For example, one may take $c=1/R$ and $C=1-m+c|x_*|$.

For $\varepsilon>0$, define the Moreau envelope
\[
 V^\varepsilon(x)=\inf_{y\in\R^n}
       \left\{V(y)+\frac{|x-y|^2}{2\varepsilon}\right\}.
\]
The minimized function is lower semicontinuous and coercive by \eqref{eq:static-linear-tail}, and it is strictly convex on its domain because of its quadratic term. It therefore has a unique minimizer, denoted $p_\varepsilon(x)$. Set
\[
 g_\varepsilon(x)=\frac{x-p_\varepsilon(x)}{\varepsilon}.
\]
For completeness, we verify the differentiability and Hessian bounds needed below. Put $p=p_\varepsilon(x)$. For $z$ with $V(z)<\infty$ and $0<s<1$, minimality at $p$ and convexity of $V$ imply
\[
 0\le s\bigl(V(z)-V(p)\bigr)
      -\frac{s}{\varepsilon}\langle x-p,z-p\rangle
      +\frac{s^2}{2\varepsilon}|z-p|^2.
\]
Divide by $s$ and let $s$ decrease to zero. The result is
\begin{equation}\label{eq:static-subgradient}
 V(z)\ge V(p)+\langle g_\varepsilon(x),z-p\rangle.
\end{equation}
It also holds when $V(z)=\infty$. Apply this inequality at $x,x'$ with $z=p_\varepsilon(x'),p_\varepsilon(x)$, respectively, and add. Writing $p'=p_\varepsilon(x')$ and $g'=g_\varepsilon(x')$ gives
\[
 \langle g_\varepsilon(x)-g',p-p'\rangle\ge0.
\]
Since $x-x'=p-p'+\varepsilon(g_\varepsilon(x)-g')$, it follows that
\[
 |x-x'|^2
 =|p-p'|^2+2\varepsilon\langle p-p',g_\varepsilon(x)-g'\rangle
       +\varepsilon^2|g_\varepsilon(x)-g'|^2
 \ge\varepsilon^2|g_\varepsilon(x)-g'|^2.
\]
Thus $g_\varepsilon$ is $\varepsilon^{-1}$-Lipschitz.

Using $p_\varepsilon(x)$ as a competitor in the minimization at $x+h$ gives
\[
 V^\varepsilon(x+h)-V^\varepsilon(x)
 \le\langle g_\varepsilon(x),h\rangle+\frac{|h|^2}{2\varepsilon}.
\]
Using $p_\varepsilon(x+h)$ as a competitor at $x$ gives the reverse bound
\[
 V^\varepsilon(x+h)-V^\varepsilon(x)
 \ge\langle g_\varepsilon(x+h),h\rangle-\frac{|h|^2}{2\varepsilon}.
\]
The Lipschitz estimate for $g_\varepsilon$ shows that the difference between either bound and $\langle g_\varepsilon(x),h\rangle$ is $O(|h|^2/\varepsilon)$. Hence $V^\varepsilon$ is continuously differentiable and $\nabla V^\varepsilon=g_\varepsilon$. It is convex: for $0\le s\le1$, insert $(1-s)p_\varepsilon(x)+sp_\varepsilon(x')$ as a competitor at $(1-s)x+sx'$ and apply convexity to both $V$ and the squared norm. Convexity together with the first of the preceding bounds yields
\begin{equation}\label{eq:static-envelope-remainder}
 0\le V^\varepsilon(x+h)-V^\varepsilon(x)
          -\langle\nabla V^\varepsilon(x),h\rangle
 \le\frac{|h|^2}{2\varepsilon}.
\end{equation}

Choose a nonnegative, even function $\chi\in C_c^\infty(\R^n)$, supported in the unit ball, with $\int\chi=1$. Put $\chi_\varepsilon(y)=\varepsilon^{-n}\chi(y/\varepsilon)$ and define
\[
 W_\varepsilon(x)
   =(V^\varepsilon*\chi_\varepsilon)(x)+\frac\varepsilon2|x|^2.
\]
Convolution with this smooth compactly supported kernel makes $W_\varepsilon$ smooth. Adding \eqref{eq:static-envelope-remainder} at $h$ and $-h$ gives
\[
 0\le V^\varepsilon(x+h)+V^\varepsilon(x-h)-2V^\varepsilon(x)
       \le\frac{|h|^2}{\varepsilon}.
\]
Integrate against $\chi_\varepsilon$, set $h=sv$ for a unit vector $v$, divide by $s^2$, and let $s$ tend to zero. We obtain
\begin{equation}\label{eq:static-hessian-bounds}
 \varepsilon I\preceq D^2W_\varepsilon(x)
       \preceq(\varepsilon^{-1}+\varepsilon)I.
\end{equation}

We also record the derivative growth explicitly. Since $\nabla V^\varepsilon$ is Lipschitz,
\[
 |\nabla V^\varepsilon(x)|
       \le|\nabla V^\varepsilon(0)|+\varepsilon^{-1}|x|.
\]
Its convolution has the same linear-growth property. Thus $\nabla W_\varepsilon$ has linear growth, and $W_\varepsilon$ has at most quadratic growth. For a multi-index $\alpha$ with $|\alpha|\ge2$, the kernel identities
\[
 \int D^\alpha\chi_\varepsilon(y)\,dy=0,
 \qquad
 \int y_iD^\alpha\chi_\varepsilon(y)\,dy=0
       \quad(1\le i\le n)
\]
follow by integration by parts, using compact support. Therefore
\[
 \begin{split}
 D^\alpha(V^\varepsilon*\chi_\varepsilon)(x)
 =\int\bigl[V^\varepsilon(x-y)-V^\varepsilon(x)
       +\langle\nabla V^\varepsilon(x),y\rangle\bigr]
        D^\alpha\chi_\varepsilon(y)\,dy.
 \end{split}
\]
Equation \eqref{eq:static-envelope-remainder} bounds its absolute value by
\[
 \frac1{2\varepsilon}\int |y|^2
                 |D^\alpha\chi_\varepsilon(y)|\,dy<\infty,
\]
uniformly in $x$. The added quadratic has constant second derivatives and zero derivatives of order at least three. This proves the required polynomial growth of every derivative.

We next prove convergence and control the tails uniformly. For every $x$, $V^\varepsilon(x)$ tends to $V(x)$ as $\varepsilon$ decreases to zero, with the value $\infty$ allowed. Here are the details. The global lower bound $V\ge m$ and the minimizing identity give
\[
 \frac{|x-p_\varepsilon(x)|^2}{2\varepsilon}
       \le V^\varepsilon(x)-m.
\]
If $V(x)<\infty$, the competitor $y=x$ gives $V^\varepsilon(x)\le V(x)$, so $p_\varepsilon(x)\to x$. Lower semicontinuity and $V^\varepsilon(x)\ge V(p_\varepsilon(x))$ then give
\[
 V(x)\le\liminf_{\varepsilon\downarrow0}V^\varepsilon(x)
       \le\limsup_{\varepsilon\downarrow0}V^\varepsilon(x)\le V(x).
\]
If $V(x)=\infty$ but $V^\varepsilon(x)$ were bounded along a sequence decreasing to zero, the same minimizing identity would give $p_\varepsilon(x)\to x$ and bounded values of $V(p_\varepsilon(x))$, contradicting lower semicontinuity.

Because $\chi_\varepsilon$ has mean zero, convexity gives
\[
 (V^\varepsilon*\chi_\varepsilon)(x)\ge V^\varepsilon(x).
\]
At an interior point of the domain, the opposite estimate
\[
 (V^\varepsilon*\chi_\varepsilon)(x)
       \le\sup_{|y|\le\varepsilon}V(x-y)
\]
follows from $V^\varepsilon\le V$. Continuity of a finite convex function on the interior of its domain implies that the right side tends to $V(x)$. Consequently $W_\varepsilon(x)\to V(x)$ there. At every point where $V(x)=\infty$, the lower estimate gives $W_\varepsilon(x)\to\infty$. Since the boundary of the domain has measure zero, these statements prove
\[
 e^{-W_\varepsilon(x)}\longrightarrow e^{-V(x)}
       \quad\text{for Lebesgue-almost every }x.
\]

From \eqref{eq:static-linear-tail} and the triangle inequality,
\[
 \begin{split}
 V^\varepsilon(x)
 &\ge c|x|-C+\inf_{y\in\R^n}
       \left\{\frac{|x-y|^2}{2\varepsilon}-c|x-y|\right\}\\
 &=c|x|-C-\frac{\varepsilon c^2}{2}.
 \end{split}
\]
The support of $\chi_\varepsilon$ is contained in $B(0,\varepsilon)$, so for $0<\varepsilon\le1$,
\[
 W_\varepsilon(x)
       \ge c|x|-C-c\varepsilon-\frac{\varepsilon c^2}{2}
       \ge c|x|-C-c-\frac{c^2}{2}.
\]
Thus $e^{-W_\varepsilon(x)}$ is bounded by a constant, independent of $\varepsilon\le1$, times $e^{-c|x|}$. Every polynomial is integrable against this majorant. Dominated convergence proves
\[
 Z_\varepsilon:=\int e^{-W_\varepsilon(x)}\,dx\longrightarrow Z,
 \qquad
 Z_\varepsilon^{-1}\int p(x)e^{-W_\varepsilon(x)}\,dx
       \longrightarrow\int p\,d\mu
\]
for every polynomial $p$. Applying the same argument to bounded continuous functions proves weak convergence of the probabilities $\widehat\mu_\varepsilon$ with densities $Z_\varepsilon^{-1}e^{-W_\varepsilon}$.

Let $m_\varepsilon$ and $A_\varepsilon$ be the mean and covariance of $\widehat\mu_\varepsilon$. The moment convergence gives $m_\varepsilon\to0$ and $A_\varepsilon\to I$. Each $A_\varepsilon$ is positive definite because $\widehat\mu_\varepsilon$ has a positive density on all of $\R^n$. Define $\mu_\varepsilon$ to be the law of
\[
 Y_\varepsilon=A_\varepsilon^{-1/2}(X_\varepsilon-m_\varepsilon),
       \qquad X_\varepsilon\sim\widehat\mu_\varepsilon.
\]
It is centered and has covariance $I$. Its potential, up to an additive normalizing constant, is
\[
 \widetilde W_\varepsilon(y)
       =W_\varepsilon(m_\varepsilon+A_\varepsilon^{1/2}y).
\]
The chain rule and \eqref{eq:static-hessian-bounds} give
\[
 \varepsilon\lambda_{\min}(A_\varepsilon)I
       \preceq D^2\widetilde W_\varepsilon(y)
       \preceq(\varepsilon^{-1}+\varepsilon)
                    \lambda_{\max}(A_\varepsilon)I.
\]
Affine substitution preserves the polynomial growth of its derivatives.

Finally, $A_\varepsilon^{-1/2}\to I$ by the spectral theorem and continuity of $s\mapsto s^{-1/2}$ near $1$. Expanding a polynomial $p(A_\varepsilon^{-1/2}(x-m_\varepsilon))$ gives a finite sum of monomials whose coefficients converge to those of $p(x)$. The moment convergence for $\widehat\mu_\varepsilon$ therefore proves the asserted moment convergence for $\mu_\varepsilon$. If $f$ is bounded and continuous, then $f(A_\varepsilon^{-1/2}(x-m_\varepsilon))\to f(x)$ pointwise. The common exponential majorant just proved, together with $Z_\varepsilon\to Z>0$, allows another application of dominated convergence. Hence $\mu_\varepsilon$ converges weakly to $\mu$ as well.
\end{proof}

The following elementary stability property is standard. Its use in
approximation arguments for Poincar\'e inequalities appears, for example,
in~\cite[Remark~4.10 and the proof of
Corollary~5.2]{CattiauxGuillinIntegrated}. We record the form needed here.

\begin{lemma}\label{lem:static-poincare-limit}
Let $\nu_j$ be probability measures converging weakly to a probability measure $\nu$ on $\R^n$. Suppose that, for some finite $C\ge0$ independent of $j$,
\[
 \int u^2\,d\nu_j-\left(\int u\,d\nu_j\right)^2
       \le C\int|\nabla u|^2\,d\nu_j
       \qquad\bigl(u\in C_c^\infty(\R^n)\bigr).
\]
Then the same inequality holds with $\nu$ in place of $\nu_j$.
\end{lemma}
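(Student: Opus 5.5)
The plan is to reduce to test functions that are well behaved under weak convergence and then pass to the limit term by term. Fix $u\in C_c^\infty(\R^n)$. The three functions $u$, $u^2$ and $|\nabla u|^2$ are bounded and continuous on $\R^n$. Weak convergence $\nu_j\to\nu$ is, by definition, convergence of integrals of bounded continuous functions. Hence
\[
 \int u\,d\nu_j\to\int u\,d\nu,\qquad
 \int u^2\,d\nu_j\to\int u^2\,d\nu,\qquad
 \int|\nabla u|^2\,d\nu_j\to\int|\nabla u|^2\,d\nu.
\]

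The left side of the assumed inequality is a continuous function of the first two integrals: it is the second minus the square of the first. It therefore converges to $\int u^2\,d\nu-(\int u\,d\nu)^2$. The right side converges to $C\int|\nabla u|^2\,d\nu$, because $C$ does not depend on $j$. Non-strict inequalities between convergent real sequences are preserved in the limit. This gives the claimed inequality for $\nu$ and every $u\in C_c^\infty$.

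There is no real obstacle here. The only point to watch is that the statement is restricted to $C_c^\infty$ test functions. For such functions, no uniform integrability or tail control of the $\nu_j$ is needed: compact support and smoothness make every integrand bounded and continuous. No moment condition enters. Extending the inequality to locally Lipschitz functions with finite energy is a separate step, handled by Lemma~\ref{finite-energy-extension}, and is not part of this statement.
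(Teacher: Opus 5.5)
Your proposal is correct and is the same as the paper's proof. For fixed $u\in C_c^\infty(\R^n)$, the functions $u$, $u^2$ and $|\nabla u|^2$ are bounded and continuous, so weak convergence gives convergence of each integral, and the non-strict inequality passes to the limit.
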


\begin{proof}
For a fixed $u\in C_c^\infty(\R^n)$, the three functions $u$, $u^2$, and $|\nabla u|^2$ are bounded and continuous. Their integrals converge by the definition of weak convergence. Taking the limit in the displayed inequality proves the assertion.
\end{proof}

The extension to finite-energy functions uses the standard combination of
mollification, spatial cutoffs, and truncation. The truncation argument
removing the a priori assumption of square integrability is described in
the one-dimensional setting by~\citet[Section~2.1, discussion following Definition~2]{RoustantBartheIooss}.
We include the details for absolutely continuous measures on $\R^n$,
together with the Sobolev variant for smooth positive densities.

\begin{lemma}\label{lem:static-finite-energy}\label{finite-energy-extension}
Let $\mu$ be an absolutely continuous probability measure on $\R^n$, and let $0\le C<\infty$. Assume
\begin{equation}\label{eq:static-smooth-poincare}
 \int u^2\,d\mu-\left(\int u\,d\mu\right)^2
       \le C\int|\nabla u|^2\,d\mu
       \qquad\bigl(u\in C_c^\infty(\R^n)\bigr).
\end{equation}
If $f:\R^n\to\R$ is locally Lipschitz and
\[
 E:=\int|\nabla f|^2\,d\mu<\infty,
\]
then $f\in L^2(\mu)$ and
\begin{equation}\label{eq:static-full-poincare}
 \int\left|f-\int f\,d\mu\right|^2\,d\mu
       \le C\int|\nabla f|^2\,d\mu.
\end{equation}
Here local Lipschitz regularity is required on the ambient space $\R^n$. For a measure on a smaller affine hull, the same statement applies in coordinates on that affine hull.
If the density of $\mu$ is smooth and strictly positive on $\R^n$, the conclusion also holds for every $f\in W_{\mathrm{loc}}^{1,2}(\R^n)$ whose weak gradient has finite $L^2(\mu)$ norm.
\end{lemma}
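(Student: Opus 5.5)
The proof goes in three stages. First we show $f\in L^2(\mu)$ by truncation. Then we prove the inequality for bounded locally Lipschitz functions by cutoff and mollification. Finally we remove boundedness by monotone limits.

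\emph{Square integrability.} Fix a median $M$ of $f$ and set $f_N=\max(-N,\min(f-M,N))$. Each $f_N$ is bounded, locally Lipschitz, has median zero, and satisfies $|\nabla f_N|\le|\nabla f|$ almost everywhere. Once the inequality is known for bounded locally Lipschitz functions (the next stage), the median argument from the proof of Lemma~\ref{full-primitive} applies. It gives $(\mathbb E f_N)^2\le\operatorname{Var}f_N$, hence
\[
 \mathbb E f_N^2\le2\operatorname{Var}f_N\le2CE.
\]
Fatou's lemma then yields $f-M\in L^2(\mu)$, so $f\in L^2(\mu)$.

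\emph{Bounded case.} Let $g$ be bounded and locally Lipschitz with $\int|\nabla g|^2\,d\mu<\infty$. Take cutoffs $\chi_R$ equal to $1$ on $B_R$, supported in $B_{2R}$, with $|\nabla\chi_R|\le 2/R$. Then $\chi_Rg$ is compactly supported and Lipschitz.

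Next we approximate $\chi_Rg$ by $C_c^\infty$ functions in the sense needed for \eqref{eq:static-smooth-poincare}. This is the delicate step, because $\mu$ is only absolutely continuous. Mollified functions $u_\varepsilon=(\chi_Rg)*\rho_\varepsilon$ satisfy three things:
\begin{itemize}
 \item they are uniformly bounded, with uniformly bounded gradients;
 \item $u_\varepsilon\to\chi_Rg$ uniformly;
 \item $\nabla u_\varepsilon\to\nabla(\chi_Rg)$ Lebesgue-almost everywhere, by Lebesgue differentiation, since a Lipschitz function has an $L^\infty$ gradient.
\end{itemize}
Absolute continuity of $\mu$ turns Lebesgue-a.e.\ convergence into $\mu$-a.e.\ convergence. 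Dominated convergence, with constant majorants on a compact set, passes both sides of \eqref{eq:static-smooth-poincare} to the limit. This gives the inequality for $\chi_Rg$.

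Now let $R\to\infty$. Using $(a+b)^2\le(1+\eta)a^2+(1+\eta^{-1})b^2$,
\[
 \int|\nabla(\chi_Rg)|^2\,d\mu\le(1+\eta)\int|\nabla g|^2\,d\mu+(1+\eta^{-1})\frac{4\|g\|_\infty^2}{R^2}.
\]
The left-hand variance converges by dominated convergence because $g$ is bounded. Letting $R\to\infty$ and then $\eta\to0$ gives the inequality for $g$.

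\emph{General $f$.} Apply the bounded case to the two-sided truncations $f_N$ defined above; this also justifies the square-integrability stage. Since $f\in L^2(\mu)$, we have $f_N\to f-M$ in $L^2(\mu)$. The variances converge, and the energies are bounded by $E$. This proves \eqref{eq:static-full-poincare}.

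\emph{Measures on an affine hull.} If $\mu$ lives on a smaller affine hull, the same argument runs in coordinates on that hull.

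\emph{Sobolev variant.} When the density is smooth and strictly positive, weighted and unweighted norms are equivalent on compact sets. A $W^{1,2}_{\rm loc}$ function can then be truncated, since the chain rule holds for truncation of Sobolev functions. After truncation, mollification converges in $W^{1,2}$ on compacts, and the same cutoff argument applies.

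The main obstacle is the mollification step. It has to work using only absolute continuity of $\mu$ and no regularity of the density, which is why the approximation relies on a.e.\ convergence of gradients together with uniform bounds rather than on norm convergence in weighted spaces.
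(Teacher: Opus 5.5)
Your proposal is correct and follows the paper's proof essentially step for step: mollification of compactly supported Lipschitz functions with Lebesgue-a.e.\ gradient convergence transferred to $\mu$ by absolute continuity, cutoffs $\eta_R h$ for bounded functions, and median-centered two-sided truncation closed by Fatou's lemma and dominated convergence, with the Sobolev variant handled the same way. The differences are cosmetic. You bound $\mathbb E f_N^2$ by $2CE$ via $(\mathbb E Z)^2\le\operatorname{Var}Z$, where the paper obtains $3CE$. You handle the cutoff limit with the inequality $(a+b)^2\le(1+\eta)a^2+(1+\eta^{-1})b^2$, whereas the paper shows $\nabla(\eta_R h)\to\nabla h$ in $L^2(\mu)$.
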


\begin{proof}
We first extend \eqref{eq:static-smooth-poincare} to compactly supported Lipschitz functions. Let $u:\R^n\to\R$ have compact support and Lipschitz constant $L$. Use the nonnegative smooth kernel $\chi_\varepsilon$ from the proof of Lemma~\ref{lem:static-regular-approximation}, and put $u_\varepsilon=u*\chi_\varepsilon$. Then $u_\varepsilon\in C_c^\infty(\R^n)$ and
\[
 |u_\varepsilon(x)-u(x)|
       \le \int L|y|\chi_\varepsilon(y)\,dy\le L\varepsilon.
\]
A Lipschitz function has a weak gradient agreeing with its pointwise gradient almost everywhere, with $|\nabla u|\le L$. Differentiating convolution in the weak sense gives
\[
 \nabla u_\varepsilon=(\nabla u)*\chi_\varepsilon,
       \qquad |\nabla u_\varepsilon|\le L.
\]
The Lebesgue differentiation theorem implies $\nabla u_\varepsilon\to\nabla u$ at Lebesgue-almost every point. Absolute continuity of $\mu$ gives the same convergence $\mu$-almost everywhere. The bound $|\nabla u_\varepsilon|\le L$, and the fact that $\mu$ is a probability measure, permit dominated convergence in the energy integral. Uniform convergence of $u_\varepsilon$ gives convergence of its first and second moments. Applying \eqref{eq:static-smooth-poincare} to $u_\varepsilon$ and taking the limit proves it for $u$.

This argument does not require a lower bound for the density of $\mu$ near the boundary of its support. The function being convolved is defined on all of $\R^n$; its gradients are bounded independently of $\varepsilon$, and convergence is first established Lebesgue-almost everywhere.

Next, let $h:\R^n\to\R$ be bounded and locally Lipschitz, with $\nabla h\in L^2(\mu)$. Choose $\eta\in C_c^\infty(\R^n)$ with $0\le\eta\le1$, equal to one on the unit ball and zero outside the ball of radius two. Set $\eta_R(x)=\eta(x/R)$ and  $ h_R=\eta_R h.$ The function $h_R$ is compactly supported and Lipschitz. To justify the latter assertion from local Lipschitz regularity, cover the compact ball $\overline B(0,3R)$ by finitely many neighborhoods on which $h$ is Lipschitz. A Lebesgue number for this cover allows each line segment in the ball to be partitioned into pieces lying in one neighborhood. Summing the local estimates bounds $|h(x)-h(y)|$ by the largest of the finitely many Lipschitz constants times $|x-y|$. Multiplication by the smooth cutoff preserves a Lipschitz bound on this ball, and the product vanishes outside $B(0,2R)$, which gives a global bound.

Dominated convergence gives $h_R\to h$ in $L^2(\mu)$. Moreover, $\nabla h_R-\nabla h=(\eta_R-1)\nabla h+h\nabla\eta_R.$ The first term tends to zero in $L^2(\mu)$ by dominated convergence. The second satisfies
\[
 \|h\nabla\eta_R\|_{L^2(\mu)}
       \le \frac{\|h\|_\infty\|\nabla\eta\|_\infty}{R}
       \longrightarrow0.
\]
We may therefore pass to the limit in the compact-support inequality for $h_R$, obtaining \eqref{eq:static-full-poincare} for every such bounded $h$.

It remains to prove integrability of a finite-energy function $f$. Choose a finite median $m$ of $f$, so that
\[
 \mu(f\le m)\ge\tfrac12,\qquad \mu(f\ge m)\ge\tfrac12.
\]
A finite median exists because $f$ is real-valued: it can be defined as the infimum of the real numbers $s$ for which $\mu(f\le s)\ge1/2$. For $N>0$, let
\[
 f_N=\max\{-N,\min\{f-m,N\}\}.
\]
These functions are bounded and locally Lipschitz, and their gradients satisfy $|\nabla f_N|\le|\nabla f|$ almost everywhere. The gradient bound follows from the fact that the clipping map is $1$-Lipschitz: at every point where $f$ and $f_N$ are differentiable, take difference quotients in each direction and use this Lipschitz bound. Both functions are differentiable almost everywhere. The bounded-function case consequently gives
\[
 \int|f_N-a_N|^2\,d\mu\le CE,\qquad
       a_N=\int f_N\,d\mu.
\]
The number zero is a median of $f_N$. If $a_N\ge0$, then on the set $\{f_N\le0\}$ one has $|f_N-a_N|\ge a_N$, and that set has probability at least $1/2$. If $a_N<0$, use instead the set $\{f_N\ge0\}$. In either case,
\[
 \frac12|a_N|^2\le\int|f_N-a_N|^2\,d\mu\le CE.
\]
It follows that
\[
 \int f_N^2\,d\mu
       =\int|f_N-a_N|^2\,d\mu+|a_N|^2\le3CE.
\]
Since $f_N\to f-m$ pointwise, Fatou's lemma gives $f-m\in L^2(\mu)$, hence $f\in L^2(\mu)$. Finally, $|f_N|\le|f-m|$ and $|f_N-(f-m)|\le|f-m|$, so dominated convergence gives $f_N\to f-m$ in $L^2(\mu)$. Their variances therefore converge to the variance of $f$. Taking the limit in their variance bound proves \eqref{eq:static-full-poincare}.

We finish with the Sobolev assertion. Write $d\mu=\rho\,dx$, with $\rho$ smooth and strictly positive. On each fixed compact set $K$, both $\rho$ and $\rho^{-1}$ are bounded, so the weighted and unweighted $L^2$ norms are equivalent there. If $u$ is compactly supported and belongs to $W_{\mathrm{loc}}^{1,2}(\R^n)$, its mollifications converge to $u$ in unweighted $W^{1,2}(\R^n)$ and have support in a fixed enlarged compact set. To see the convergence directly, apply
\[
 \|v*\chi_\varepsilon-v\|_{L^2(dx)}
 \le\int\chi_\varepsilon(y)
               \|v(\,\cdot-y)-v\|_{L^2(dx)}\,dy
       \longrightarrow0
\]
to $v=u$ and to each weak derivative of $u$, using continuity of translations in $L^2(dx)$. The bounds on $\rho$ then give convergence in the weighted function and gradient norms. This proves \eqref{eq:static-smooth-poincare} for compactly supported Sobolev functions. Bounded Sobolev functions are treated by the same cutoffs $h_R=\eta_Rh$ and the same two energy estimates as above. Finally, Sobolev clipping preserves local $W^{1,2}$ regularity and satisfies $|\nabla f_N|\le|\nabla f|$ almost everywhere. One can obtain this bound by replacing the clipping map by smooth maps with derivatives bounded by one: the chain rule for a smooth map follows by mollifying $f$ on compact sets, and weak lower semicontinuity of the gradient norm on each measurable subset gives the stated bound in the clipping limit. The median, Fatou, and $L^2$ convergence arguments above consequently apply without change and prove the Sobolev assertion.
\end{proof}

\end{document}